\newtheorem{theorem}{Theorem}[section]
\newtheorem{lemma}[theorem]{Lemma}
\newtheorem{corollary}[theorem]{Corollary}
\newtheorem{proposition}[theorem]{Proposition}
\newtheorem{question}[theorem]{Question}
\theoremstyle{definition}
\newtheorem{definition}[theorem]{Definition}
\theoremstyle{remark}
\newtheorem{remark}[theorem]{Remark}
\numberwithin{equation}{section}
\newcommand{\transv}{\mathrel{\text{\tpitchfork}}}
\newcommand{\tpitchfork}{%
  \vbox{
    \baselineskip\z@skip
    \lineskip-.52ex
    \lineskiplimit\maxdimen
    \m@th
    \ialign{##\crcr\hidewidth\smash{$-$}\hidewidth\crcr$\pitchfork$\crcr}
  }%
}
\DeclareMathOperator{\img}{Im}
\newcommand{\tr}{\mathrm{tr}}
\newcommand{\tcodim}{transversality codimension}
\begin{document}

\title[Arnold's transversality conjecture for the Laplacian]{On Arnold's Transversality Conjecture for the Laplace--Beltrami Operator}

\author{Josef Greilhuber}
\address{380 Jane Stanford Way, Stanford, CA 94305, USA}
\curraddr{}
\email{jgreil@stanford.edu}
\thanks{}

\author{Willi Kepplinger}
\address{Oskar Morgenstern Platz 1, 1090 Wien, Austria}
\curraddr{}
\email{willi.kepplinger@univie.ac.at}
\thanks{}

\subjclass[2020]{58J50 (Primary); 58C40, 58J37, 47A55, 53B20}
% 58C40 ... Spectral theory; eigenvalue problems on manifolds; 58J50 ... Spectral problems; spectral geometry; scattering theory on manifolds; 58J37 ... Perturbations of PDEs on manifolds; asymptotics; 53B20 ... Local Riemannian Geometry (Section 4.1); 58J40 ... Pseudodifferential and Fourier integral operators on manifolds; 47A55 ... Perturbation theory of linear operators (Section 2.1); 

\date{\today}

\dedicatory{}

\begin{abstract}
This paper is concerned with the structure of the set of Riemannian metrics on a connected manifold such that the corresponding Laplace-Beltrami operator has an eigenvalue of a given multiplicity. 
The starting point of our investigation is the ``Strong Arnold Hypothesis'' introduced by Colin de Verdi\`{e}re, which posits that Laplace eigenvalues of higher multiplicity split up under perturbation exactly as eigenvalues of symmetric matrices do. There exists a simple geometric characterization, due to Colin de Verdi\`ere and Besson, of metrics which satisfy the Strong Arnold Hypothesis.

Using Besson's characterization, we prove the Strong Arnold Hypothesis is satisfied for all metrics except for a set of infinite codimension, and use this to obtain the precise codimension of the set of metrics admitting an eigenvalue of any given multiplicity. Furthermore, we show that the Strong Arnold Hypothesis is satisfied for all metrics admitting eigenvalues of multiplicity at most six, and discuss several examples of metrics violating it.
\end{abstract}

\maketitle

\section{Introduction}

A famous result of Wigner and von Neumann \cite{VonNeumannWigner} called the ``non-crossing rule'' states that energy levels of quantum mechanical systems do not cross along generic $1$-parameter families of Hamiltonians. Inspired by this, Arnold \cite{Arnold1972} proposed a transversality condition for sufficiently general families of operators, later called the \emph{strong Arnold hypothesis} (SAH) by Colin de Verdi\`{e}re \cite{CdV1988}: operators in the family with an eigenvalue of multiplicity $m$ should locally form a manifold of codimension $\frac{m(m+1)}{2}-1$, given by a natural defining function. Arnold also noted that generic $k$-parameter families of such operators avoid $m$-fold eigenvalues to exactly the extent one would expect by the codimension count. We will say that such a family of self-adjoint operators satisfies a \emph{maximal non-crossing rule}.

The non-crossing rule for $1$-parameter families of Laplace-Beltrami operators was proven by Uhlenbeck \cite{Uhlenbeck1976} using the Sard--Smale theorem, as opposed to a local structure theorem for the set of operators with multiple eigenvalues that Arnold had in mind. A perturbation-theoretic approach closer to that proposed by Arnold was taken up in 1980 by Bleecker and Wilson \cite{BleeckerWilson1980} and 1983 by Bando and Urakawa \cite{BandoUrakawa1983}. In both cases, it was used to prove generic simplicity of the Laplace-Beltrami spectrum in a fixed conformal class. Arnold had conjectured that the family of Laplace operators over the set of Riemannian metrics on a given manifold satisfies the SAH, which would yield a much stronger codimension statement for eigenvalues of higher multiplicity, but counterexamples of metrics not satisfying the SAH were exhibited by Colin de Verdi\`{e}re in \cite{CdV1988}. In the same article, the first explicit criterion for the failure of the SAH was obtained. This criterion, which was subsequently greatly simplified by Besson \cite{Besson1989}, is the starting point of the present paper.

To the best of our knowledge, there has not been further progress on understanding the local structure of the set of Riemannian metrics for which the Laplace operator has eigenvalues of multiplicity greater than two. In particular, no non-crossing rule in multiplicity greater than two has been established for the Laplacian (nor for any other natural, infinite dimensional family of operators). In this paper we prove that the set of metrics for which the SAH does not hold has infinite codimension. This means that Arnold's transversality conjecture ``morally'' holds for this family of self-adjoint operators, because it allows for a maximal non-crossing rule: Generic $k$-parameter families of metrics avoid Laplace operators with multiple eigenvalues to exactly the extent that Arnold expected.\par

\begin{comment}
We would like to point out that there are very general families of operators for which Arnold's transversality hypothesis does not hold, even in this restricted sense. Arnold himself was aware that the hypothesis fails if, for example, certain symmetry conditions are imposed on the family of operators, which forces some clusters of eigenvalues to collide along $1$-parameter families of operators \cite[Appendix 10, subsection D]{Arnold1997MathMethods}. This will be discussed in Section \ref{sec:misc}.
\end{comment}

Finally, let us mention several investigations along similar lines. Lupo and Micheletti \cite{LupoMicheletti1993} independently discovered Arnold's approach to eigenvalue multiplicities in families of self-adjoint operators. Subsequently, Micheletti and Pistoia \cite{Micheletti1998} showed that the SAH is satisfied for Laplace operators on $2$-dimensional manifolds for eigenvalues of multiplicity $2$, which implies Uhlenbeck's non-crossing rule via the perturbation-theoretic approach.

Around the same time, Teytel formalized a useful \emph{abstract} criterion for establishing $1$-parameter non-crossing rules \cite[Theorem A]{Teytel1999}, conditional on the SAH. In a similar vein, Porter and Schwarz \cite{PorterSchwarz2017} established very general theorems on avoidance of intersection with countable unions of submanifolds in a Banach space.

More recent work on the problems considered in this paper includes an alternative proof of generic simplicity in a conformal class from the variational principle for the Rayleigh quotient by Guillemin, Legendre and Sena-Dias \cite{GuilleminLegendre2014}, who also note the failure of transversality observed by Colin de Verdi\`ere, and Gomes and Marrocos' proof \cite{GomesMarrocos2019} of generic simplicity of the Laplace--Neumann spectrum.

A significant body of work is devoted to the study of eigenvalue simplicity in the presence of a group action, a setting in which the SAH is almost never satisfied, starting with Zelditch' \cite{Zelditch1990} study of generic covers of Riemannian manifolds. Here, the analogue of generic simplicity of eigenspaces is \emph{generic irreducibility} of the eigenspaces viewed as representations of the group in question. Recent work includes papers of Schueth and others (\cite{Schueth2017}, \cite{PetreccaRoser}, \cite{MarrocosDeOliveira2024}) devoted to invariant metrics on homogeneous or symmetric spaces, Jung and Zelditch's analysis of metrics with a circle action (\cite{JungZelditch2020}) and an investigation of torus actions by Cianci, Judge, Lin, and Sutton \cite{CianciJudgeLinSutton2024}.

Lastly, the notion of stable eigenvalues has been used to great effect in order to prescribe any finite part of the spectrum of several geometric operators. This was first done by Colin de Verdi\`{e}re \cite{CdV1986,CdV1987} for the Laplace operator (a result that was significantly refined by Lohkamp in \cite{Lohkamp1996}). Colin de Verdi\'{e}re's approach, which involves constructing a simplified model with the desired spectral properties and using transversality theory to show these persist in the setting at hand, was later  successfully applied to many more differential operators, such as to the Dirac operator by Dahl \cite{Dahl2005}, the Hodge--Laplace operator by Guerini \cite{Guerini2004} and Jammes \cite{Jammes2008}, the Steklov spectrum by Jammes \cite{Jammes2014} and the Laplace--Beltrami operator with Dirichlet boundary conditions by He \cite{He2024} and He--Wang \cite{HeWang2024}. In related work, Ann\'{e} derived surgery formulas for the Laplace spectrum \cite{Anne1994}.

\subsection*{Acknowledgements}
The first author is grateful to his advisor, Professor Eugenia Malinnikova, for her tireless support and wonderful mentoring. Special thanks go also to Professor Rafe Mazzeo for insightful comments putting \Cref{lemma: smooth kernel} into perspective, and to Romain Speciel and Ben Foster for being great friends and sources of inspiration in spectral theory and elliptic PDEs, respectively. 

The second author wishes to thank his advisor Vera V\'ertesi and Michael Eichmair for their constant support and helpful mentoring as well as the Vienna School of Mathematics for providing a stable and pleasant research environment. Much of the relevant research was conducted during a stay at Stanford University, and so he wishes to express his gratitude to the institution and, in particular, to Eugenia Malinnikova for the invitation. His research stay was made possible by NSF grant DMS-2247185.

The authors are deeply grateful to Christopher Judge for illuminating discussions around the background of this problem.

This research was funded in part by the Austrian Science Fund (FWF) [10.55776/ P34318] and [10.55776/Y963]. For open access purposes, the author has applied a CC BY public copyright license to any author-accepted manuscript version arising from this submission.

\subsection{Main results.}

Let $M$ be a closed, smooth manifold, which we will always assume to be of dimension at least $2$. Denote the Fr\'{e}chet manifold of smooth Riemannian metrics on $M$ by $\mathcal G(M)$. Fixing $g \in \mathcal G(M)$, we also consider the space $\mathcal G(M,g)$ of smooth conformal multiples of $g$. To each $g \in \mathcal G(M)$, we associate the corresonding Laplace--Beltrami operator $\Delta_g$. We will denote the space of real symmetric $m\times m$ matrices with $\mathbb R^{\frac{m(m+1)}{2}}$. 

In \cite{Besson1989}, Besson established the following theorem, building on work by Bleecker--Wilson and Colin de Verdi\`ere.

\begin{theorem}[\cite{Besson1989}]
\label{theorem:transverse structure theorem}
Let $g$ be a Riemannian metric on a closed smooth manifold $M$ of dimension at least $2$, and let $\lambda \in \sigma(\Delta_g)$ be an eigenvalue of multiplicity $m$. Then at least one of the following is true.
\begin{enumerate}[label=(\alph*)]
    \item There exist an open neighborhood $\mathcal U$ of $g$ in $\mathcal G(M)$ and a $C^1$ submersion $\pi: \mathcal U \rightarrow \mathbb R^{\frac{m(m+1)}{2}}$ and $\varepsilon > 0$ such that 
    \begin{align}
        &\sigma(\Delta_{g'}) \cap (\lambda-\varepsilon,\lambda+\varepsilon) = \sigma(\pi(g')),
    \end{align}
    counted with multiplicity, for all $g' \in \mathcal U$.
    \item There exists an orthogonal eigenbasis $(u_j)_{j=1}^m$ of the $\lambda$-eigenspace of $\Delta_g$ and $(\varepsilon_j)_{j=1}^m \in \{-1,0,1\}^m$ such that
    \begin{align}
    \label{equation: linear degeneracy 1}
        \sum_{j=1}^m \epsilon_j u_j^2 = 0,\\
    \label{equation: linear degeneracy 2}
        \sum_{j=1}^m \epsilon_j \, du_j \! \otimes \! d u_j = 0.
    \end{align}
\end{enumerate}
In a fixed conformal class $\mathcal G(M,g)$, the analogous statement holds if one replaces the pair of conditions \eqref{equation: linear degeneracy 1}--\eqref{equation: linear degeneracy 2} by condition \eqref{equation: linear degeneracy 1} alone.
\end{theorem}

Following \cite{CdV1988}, in the first of the two cases we say $(g,\lambda)$ satisfies the Strong Arnold Hypothesis (SAH) with respect to $\mathcal G(M)$ (or $\mathcal G(M,g)$, respectively), and call $\lambda$ a \emph{stable} (resp. \emph{conformally stable}) eigenvalue of $\Delta_g$. In the second case, we call the eigenvalue $\lambda$ \emph{unstable} (resp. \emph{conformally unstable}). 

To prove \Cref{theorem:transverse structure theorem} one first reduces the statement, via perturbation theory, to a pointwise condition on eigenfunctions, as already done implicitly in \cite{CdV1988}. The resulting condition is rather messy in all dimensions greater than $2$, but can be shown to be equivalent to \eqref{equation: linear degeneracy 1}--\eqref{equation: linear degeneracy 2} by a series of reduction steps, which to the best of our knowledge appear only in Besson's paper \cite{Besson1989} (although with important precursors in the work of Bleecker--Wilson and Bando--Urakawa) \footnote{In earlier preprints, this theorem was unfortunately claimed as one of this paper's main results. The authors wish to sincerely apologize for missing Besson's result in their literature search.}.
This simplified condition allows us to prove that the set of metrics with (conformally) unstable eigenvalues is small. In order to make this precise we introduce the following notion.

\begin{definition}
\label{def:tcodim}
A subset $\mathcal{D}$ of a Fr\'echet manifold $\mathcal{X}$ is said to have \tcodim \ $k$ if for every smooth manifold $N$ of dimension less than $k$, the set of maps in $C^\infty (N,\mathcal{X})$ avoiding $\mathcal{D}$ is residual. We say a subset $\mathcal{D}$ of $\mathcal{X}$ has infinite \tcodim \ if it has \tcodim \, $k$ for all $k \in \mathbb N$.
\end{definition}

This notion is meant to capture sets that behave like submanifolds from a transversality theoretic point of view but which might be much more irregular. Indeed, the set of metrics for which the Laplacian has non-simple spectrum has a fairly complicated structure. Using this definition, we can make precise the sense in which metrics admitting a conformally unstable eigenvalue (and therefore also those admitting a unstable eigenvalue) have infinite codimension.

\begin{restatable}{theorem}{unstable}
\label{theorem:nontransverse}
Let $M$ be a closed, connected smooth manifold of dimension at least $2$, and $\mathcal G(M)$ the set of smooth Riemannian metrics on $M$. Then the set of metrics $g \in \mathcal G(M)$ such that $\Delta_g$ has a conformally unstable eigenvalue has infinite \tcodim.
The same holds in any fixed conformal class $\mathcal G(M,g_0)$.
\end{restatable}

\Cref{theorem:transverse structure theorem} and \Cref{theorem:nontransverse} together imply a sweeping statement on the set of metrics admitting eigenvalues of a given multiplicity. It shows that a maximal non-crossing rule indeed holds in our setting.

\begin{restatable}{theorem}{Codimension}
\label{theorem:nonCrossing}
Let $M$ be a closed, connected smooth manifold of dimension at least $2$, and $\mathcal G(M)$ the set of smooth Riemannian metrics on $M$. Then the set of metrics $g \in \mathcal G(M)$ where $\Delta_g$ has an eigenvalue of multiplicity at least $m$ has \tcodim \ $\frac{m(m+1)}{2}-1$.
The same holds in any fixed conformal class $\mathcal G(M,g_0)$.
\end{restatable}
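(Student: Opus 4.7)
The plan is to combine \Cref{theorem:nondegenerate structure theorem} and \Cref{theorem:degenerate} via a stratification and transversality argument. Write $\mathcal{M}_{\geq m}$ for the set of metrics $g$ such that $\Delta_g$ has an eigenvalue of multiplicity at least $m$, and decompose $\mathcal{M}_{\geq m} = \mathcal{D} \cup \mathcal{N}$, where $\mathcal{D}$ is the set of metrics admitting a conformally degenerate eigenvalue and $\mathcal{N} := \mathcal{M}_{\geq m}\setminus \mathcal{D}$. By \Cref{theorem:degenerate}, $\mathcal{D}$ has infinite \tcodim, so it suffices to prove $\mathcal{N}$ has \tcodim\ at least $\frac{m(m+1)}{2}-1$.

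For each $g_0 \in \mathcal{N}$ and each eigenvalue $\lambda$ of $\Delta_{g_0}$ of multiplicity $m' \geq m$, \Cref{theorem:nondegenerate structure theorem} supplies a neighborhood $\mathcal{U}_{g_0,\lambda}$ of $g_0$, some $\varepsilon>0$, and a submersion $\pi: \mathcal{U}_{g_0,\lambda} \to \mathbb{R}^{m'(m'+1)/2}$ whose value at $g$ is a symmetric $m' \times m'$ matrix recording the part of the spectrum of $\Delta_g$ inside $(\lambda-\varepsilon,\lambda+\varepsilon)$. Inside the symmetric $m'\times m'$ matrices, the locus $\Sigma_{m',m}$ of matrices admitting an eigenvalue of multiplicity at least $m$ is a semi-algebraic set stratified by finitely many smooth submanifolds, each of codimension at least $\frac{m(m+1)}{2}-1$, with equality on the top stratum consisting of matrices having a single $m$-fold eigenvalue and otherwise simple spectrum; this is the classical codimension count for symmetric matrices with prescribed spectral multiplicities, going back to Arnold. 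Since $\pi$ is a submersion, $\pi^{-1}(\Sigma_{m',m})$ is a finite union of smooth submanifolds of $\mathcal{U}_{g_0,\lambda}$, each of codimension at least $\frac{m(m+1)}{2}-1$ in $\mathcal{G}(M)$, and it contains the part of $\mathcal{N}$ in $\mathcal{U}_{g_0,\lambda}$ associated to $\lambda$.

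Separability of $\mathcal{G}(M)$, together with the discreteness of each $\sigma(\Delta_g)$, allows us to extract a countable subcover of $\mathcal{N}$ by such neighborhoods, exhibiting $\mathcal{N}$ as a countable union of locally closed submanifolds of codimension at least $\frac{m(m+1)}{2}-1$. A Thom-type transversality argument now yields that, for any smooth manifold $N$ of dimension strictly less than $\frac{m(m+1)}{2}-1$, a residual set of maps $f \in C^\infty(N, \mathcal{G}(M))$ is simultaneously transverse to each submanifold in the list; by the dimension count, transversality forces disjointness, and a Baire argument combines this with the residual avoidance of $\mathcal{D}$ provided by \Cref{theorem:degenerate} to give residual avoidance of $\mathcal{M}_{\geq m}$. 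The same argument, using the conformal versions of both input theorems, handles the case of a fixed conformal class.

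The main obstacle is making the final transversality step rigorous, since classical Thom--Smale transversality is stated in the Banach category while $\mathcal{G}(M)$ is only a Fr\'echet manifold. The standard workaround is to pass to a Banach approximation, for instance metrics of Sobolev class $H^s$ for $s$ large, apply the Sard--Smale theorem there to obtain residuality of transverse $H^s$-maps, and transfer the conclusion back to $C^\infty(N,\mathcal{G}(M))$ by a density and diagonal argument, exploiting that smoothness of $f$ is preserved under small perturbations and that the submanifolds in question are defined by smooth equations.
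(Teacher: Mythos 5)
Your overall architecture (split off the conformally degenerate metrics via \Cref{theorem:degenerate}, model the rest locally by the submersion $\pi$ from \Cref{theorem:nondegenerate structure theorem}, pull back the multiplicity locus in symmetric matrices, and finish with Fr\'echet transversality) is the same as the paper's, and the local step is fine: pulling back the whole stratified semialgebraic locus $\Sigma_{m',m}$ is a legitimate, even slightly cleaner, variant of what the paper does (the paper only uses $\pi^{-1}(\mathbb R\cdot\mathrm{id})$ after first passing to the eigenvalue of maximal multiplicity). The genuine gap is the stitching step. Each neighborhood $\mathcal U_{g_0,\lambda}$ only controls the spectrum inside the fixed window $(\lambda-\varepsilon,\lambda+\varepsilon)$, so a metric $g\in\mathcal N\cap\mathcal U_{g_0,\lambda}$ is \emph{not} forced to lie in $\pi^{-1}(\Sigma_{m',m})$: its qualifying eigenvalue of multiplicity $\geq m$ may sit far outside that window, in particular arbitrarily high in the spectrum, where no neighborhood of $g_0$ prevents crossings. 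Consequently, extracting a countable (Lindel\"of) subcover of the \emph{open} sets $\mathcal U_{g_i,\lambda_i}$ does not exhibit $\mathcal N$ as a countable union of the pulled-back submanifolds — membership of $g$ in $\mathcal U_{g_i,\lambda_i}$ says nothing about whether $g$'s multiple eigenvalue lies in the $i$-th window — and even the local containment hypothesis of \Cref{lemma: appendix: local transversality} fails for $\mathcal N$ itself. This is exactly why the paper does not work with $\mathcal N$ directly but first truncates the spectrum to $[-R,0)$ and stratifies by the maximal multiplicity/degeneracy data in that window (the sets $\overline{\mathcal T}_{m;R}$ and $\overline{\mathcal S}_{m,k;R}$): upper semicontinuity of this data guarantees that, near a point of a stratum, every other metric of the \emph{same} stratum has its qualifying eigenvalue close to one of the finitely many qualifying eigenvalues of the center, so finitely many local models suffice, and the countable union is then taken over $(m',R)$ rather than over a subcover of neighborhoods. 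Your proposal needs some device of this kind; "separability plus discreteness of the spectrum" does not supply it.

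A secondary divergence: for the transversality step you propose a detour through $H^s$ Banach manifolds, Sard--Smale, and a density/diagonal transfer back to $C^\infty$. The paper avoids this entirely by proving a parametric transversality lemma directly in the Fr\'echet setting (\Cref{lemma: appendix: parametric transversality}, \Cref{lemma: appendix: local transversality}), exploiting that the source $N$ is finite dimensional and the avoided sets are locally zero sets of submersions onto $\mathbb R^k$; since only finite-dimensional Sard is needed, no Banach approximation or Nash--Moser machinery is required. Your workaround could probably be made to work for the avoidance statement, but as written ("density and diagonal argument") it is vague, and it is strictly more complicated than what the situation demands.
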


\begin{remark}
    Throughout this paper we work with $C^\infty$ metrics. In previous work on the subject of non-crossing rules (\cite{Uhlenbeck1976},\cite{Teytel1999}), the underlying spaces of metrics had finite regularity and thus a Banach manifold structure. However, all relevant maps occurring in the present paper have finite dimensional domains or codomains, allowing us to carry over the requisite transversality theory. Smooth regularity is crucial for our proof of \Cref{theorem:nontransverse}. We will nevertheless be able to conclude that the maximal non-crossing rule holds for $C^s$ metrics for all $s\geq 0$, see \Cref{corollary: finite regularity}. This corollary is proven from \Cref{theorem:nonCrossing}, using only continuous dependence of the eigenvalues, and holds in lower regularity than previous statements in the literature, which were obtained via perturbation theory in $C^s$-regularity, $s \geq 2$.
\end{remark}

In \cite{BerkolaikoZelenko2024} Berkolaiko and Zelenko analyze topological features of maps $F:N\to \mathbb R^{\frac{m(m+1)}{2}}$ where $N$ is a smooth $k$-dimensional manifold. More specifically, given the $\ell^{th}$ ordered eigenvalue functional $\lambda_\ell$ one may consider the map $$\hat\lambda_\ell :=\lambda_\ell\,\circ \, F : N\to \mathbb R$$ While this function is not smooth, the authors show that $\hat\lambda_\ell$ only has nondegenerate topological critical points provided that the family of symmetric matrices $F$ is chosen so as to be suitably generic, a so-called generalized Morse family. In this setting they prove that the functions $\hat \lambda_\ell$ satisfy Morse inequalities, i.e.\ explicit lower bounds on the number of critical points of a given index in terms of the topology of the underlying parameter space.
\begin{comment}, i.e., one has that
\begin{align*}
    c_0 &= \beta_0 + r_1\\
    c_1 &= \beta_1 + r_1 + r_2\\
    c_2 &= \beta_2 + r_2 + r_3 \\
    &\vdots\\
    c_{k-1} &= \beta_{k-1} + r_{k-1} + r_k \\
    c_k &=\beta_k + r_k
\end{align*}
where $c_i$ are the number of topological critical points of index $i$, $\beta_i$ are the $i^{th}$ Betti numbers of $M$, and $r_i$ are non-negative integers. 
\end{comment}
The foundational prerequisite for applying these ideas to generic $k$-parameter families of operators such as the Laplace-Beltrami operator is that a transversality theorem like \Cref{theorem:transverse structure theorem} holds for every point and eigenvalue along any generic $k$-parameter family. Thus \Cref{theorem:nontransverse} enables an analysis of the Morse theory of the $\ell^{th}$ ordered eigenvalue functional as performed in \cite{BerkolaikoZelenko2024} in the setting of Laplace-Beltrami operators parametrized by Riemannian metrics.

Given that the (conformal) instability condition is a very strong pointwise condition it is natural to ask whether it can ever be satisfied. The answer to this question is known to be yes \cite{CdV1988}. However, we prove that any unstable eigenvalue must have fairly high multiplicity.
\begin{theorem}
\label{theorem: nonexistence of low multiplicity nontransverse eigenvalues}
    On a connected, closed, smooth Riemannian manifold, there exist no conformally unstable eigenvalues of multiplicity less than four, and no unstable eigenvalues of multiplicity less than seven.
\end{theorem}
There exist examples of a conformally unstable eigenvalue of multiplicity four and of an unstable eigenvalue of multiplicity eight, see Section $4$.\\
To formulate the last result, we fix a metric $g$ and an eigenvalue $\lambda$ of multiplicity $m$. Denote by $\mathcal{V}_{m,k} (M) \subset \mathcal{G}(M)$ (respectively $\mathcal{V}_{m,k}(M,g)\subset\mathcal{G}(M,g)$) the set of smooth metrics on $M$ (respectively smooth metrics on $M$ conformal to $g$) for which the $k^{th}$ eigenvalue has multiplicity $m$.
It is a well known result (see Section $2$) that $\lambda$ being a stable (respectively conformally stable) eigenvalue implies that $\mathcal{V}_{m,k} (M)$ (respectively $\mathcal{V}_{m,k}(M,g)$ is a submanifold of $\mathcal{G}(M)$ (respectively $\mathcal{G}(M,g)$) at $g$. One thus obtains
\begin{theorem}
    Let $M$ be a closed, smooth manifold of dimension at least $2$. The sets $\mathcal{V}_{m,k}(M) \subset \mathcal{G}(M)$ and $\mathcal{V}_{m,k}(M)$ are codimension $\frac{m(m+1)}{2}-1$ submanifolds of $\mathcal{G}(M)$ and $\mathcal{G}(M,g)$ in the complement of sets of transversality codimension infinity, respectively. Furthermore, $\mathcal{V}_{m,k}(M)$ and $\mathcal{V}_{m,k}(M,g)$ are codimension $\frac{m(m+1)}{2}-1$ submanifolds whenever $m\leq 6$ and $m\leq 3$, respectively. Lastly, the set of metrics for which $\mathcal{V}_{m,k}(M,g)$ is not a manifold is in general nonempty.
\end{theorem}
The above theorem is a direct consequence of \Cref{theorem:nontransverse} and \Cref{theorem: nonexistence of low multiplicity nontransverse eigenvalues} combined with \Cref{theorem: general local structure theorem}. A counterexample to $\mathcal{V}_{m,k}(M,g)$ being a manifold is provided in \Cref{ex:nontransverse}.

\subsection{Structure of this paper.} In Section $2$, which is purely expository, we set the stage for the transversality theoretic results: We provide the abstract perturbation-theoretic setup necessary to define the map $\pi$ of \Cref{theorem:transverse structure theorem} and define the Strong Arnold Hypothesis. We then specialize the setup to the case of the Laplace-Beltrami operator and discuss the proof of Besson's Theorem \ref{theorem:transverse structure theorem}. Section $3$ is the heart of the paper, establishing Theorems \ref{theorem:nontransverse} and \ref{theorem:nonCrossing}.
Section $4$ is concerned with a closer examination of unstable eigenvalues, containing a proof that no such eigenvalues exist below multiplicity $7$ (\Cref{theorem: nonexistence of low multiplicity nontransverse eigenvalues}), and examples of (conformally) unstable eigenvalues. We conclude the paper with a number of open questions.

\subsection{Preliminaries}
\label{sec:preliminaries}

In this paper, Einstein's summation convention is used, meaning that any index occurring twice in an expression is automatically summed over. We will often write $\partial_j$ for $\frac{\partial}{\partial x^j}$ in coordinate expressions.

Let $(M,g)$ be a closed, smooth Riemannian manifold. Its Laplace-Beltrami operator $\Delta_g$ is given in coordinates by 
\begin{align*}
    \Delta_g u = |g|^{-\frac{1}{2}} \partial_j (|g|^{\frac{1}{2}} g^{jk} \partial_k u),
\end{align*}
where, as usual, $|g|$ denotes $\det g$ and $g^{jk}$ the components of $g^{-1}$. Of course, this expression arises necessarily if we wish the identity 
\begin{align*}
    \int_M u \Delta_g v d\mu_g = - \int_M g(\nabla_g u , \nabla_g v) \, d\mu_g
\end{align*}
to hold for all $u,v\in C^\infty (M)$, where $\nabla_g$ and $\mu_g$ denote the gradient with respect to $g$ and the volume form, respectively. Let $H^2(M)$ and $L^2(M)$ denote the respective Sobolev spaces, defined with respect to a fixed reference metric on $M$. It is well known that $\Delta_g$ is a bounded operator from $H^2(M)$ to $L^2(M)$. It can be interpreted as a closed unbounded operator on $L^2(M)$, which is self-adjoint with respect to the inner product induced by $g$. Its spectrum consists of one zero eigenvalue and a countable, discrete set of negative eigenvalues with finite multiplicity.

It will be crucial in our proof of \Cref{theorem:nontransverse} that eigenfunctions satisfy a unique continuation theorem: If $M$ is connected, eigenfunctions of $\Delta_g$ cannot vanish to infinite order anywhere. This follows from Aronszajn's unique continuation theorem for general elliptic operators of second order \cite{Aronszajn1957}.

Following Hamilton's book \cite{Hamilton1982}, we call a map $F:\mathcal F \to \mathcal F'$ between Fr\'{e}chet spaces $\mathcal F$ and $\mathcal F'$ \emph{continuously differentiable}, or $C^1$, if 
\begin{align*}
    DF(q)[h] = \lim_{t \to 0} \frac{F(q+th) - F(q)}{t}
\end{align*}
exists everywhere, is linear in $h$, and \emph{jointly continuous} as a function of $q$ and $h$. This notion is stronger than Gateaux differentiability, but much weaker than the definition of continuous differentiability one might expect: The derivative $DF$ need \emph{not} be a continuous map from $\mathcal F$ into the space of continuous linear operators from $\mathcal F$ to $\mathcal F'$. In particular, we never specify a topology on that space.

Let $X,Y$ and $Z$ be Banach spaces, and denote by $\mathcal L(X,Y)$ and $\mathcal L(Y,Z)$ the spaces of continous linear maps from $X$ to $Y$ and from $Y$ to $Z$, respectively, equipped with the operator norm. Suppose $A: \mathcal F \to \mathcal L(Y,Z)$ and $B: \mathcal F \to \mathcal L(X,Y)$ are $C^1$ maps. Then the following version of the product rule holds: For all $q,h \in \mathcal F$,
\begin{align*}
    D(A \circ B)(q)[h] = DA(q)[h] \circ B(q) + A(q) \circ DB(q)[h].
\end{align*}
It is proven in the same way as the product rule in one-variable calculus, since only a directional derivative is taken. The right hand side is clearly jointly continuous in $q$ and $h$, hence $A \circ B: \mathcal F \to \mathcal L(X,Z)$ is $C^1$.

All maps occuring in this paper have either finite dimensional domain or finite dimensional codomain. Thus, none of the usual problems which beset differential geometry in Fr\'{e}chet spaces occur. Borrowing terminology from Freyn \cite{Freyn2015}, we call a subset $\mathcal M \subseteq \mathcal X$ of a Fr\'{e}chet manifold $\mathcal X$ a \emph{cofinite Fr\'{e}chet submanifold} of codimension $N \in \mathbb N$ if, for all $p \in \mathcal M$, there exists a neighborhood $\mathcal U \subseteq \mathcal X$ of $p$ and a $C^1$ submersion $\pi:\mathcal U \to \mathbb R^N$ such that $\mathcal M \cap \mathcal U = \pi^{-1}(0)$.

The standard proof of the implicit function theorem then yields that a cofinite Fr\'{e}chet submanifold is locally a graph over its tangent space at any point $p \in \mathcal M$, and therefore a Fr\'{e}chet manifold modelled on the Fr\'{e}chet space $\ker DF(p) \subseteq T_p \mathcal X$. 
%No Nash-Moser iteration is needed in this construction. 
Lacking a reference for this simple observation, we provide a proof in \Cref{section: appendix 0}.

Thom's transversality theory can be carried over with minimal modifications to the setting of smooth maps $f: \mathbb R^K \to \mathcal X$ and cofinite submanifolds $\mathcal M \subseteq \mathcal X$. The necessary statements are provided and proven in \Cref{section: appendix 1}.

The family of operators $\Delta_g: H^2(M) \to L^2(M)$ gives rise to a $C^1$ map 
\begin{align*}
    \Delta_{(\cdot)}: \mathcal G(M) \to \mathcal L(H^2(M), L^2(M)),
\end{align*}
into the Banach space of bounded linear operators from $H^2(M)$ to $L^2(M)$.
Its resolvent $(\lambda - \Delta_g)^{-1}$ is likewise a $C^1$ map from a dense open subset of $\mathbb C \times \mathcal G(M)$ to $\mathcal L(L^2(M), H^2(M))$ (and thus also, if needed, into $\mathcal L(L^2(M))$). Finally, fixing a circle $\gamma$ in $\mathbb C$, the corresponding Riesz projection
\begin{align*}
    P_\gamma(g) = \frac{1}{2\pi i} \int_\gamma (\zeta - \Delta_g)^{-1} d\zeta
\end{align*}
is a $C^1$ map into $\mathcal L(L^2(M))$ on the open subset of metrics admitting no Laplace eigenvalue on $\gamma$. These statements are proven in \Cref{section: appendix 2}.

\section{A local structure theorem for metrics with Laplace eigenvalues of higher multiplicity}
\label{section: local structure theorem}
The aim of this section is to gather results that are known or implicit in the literature and to adapt and recast them in a form appropriate for our purposes. 
In Section \ref{subsection: technical} we recall a well-known setup for perturbation theory for parametric families of good spectral problems. In Section \ref{subsection: specialization to Laplace-Beltrami operator} we will specialize the discussion to the family of Laplace--Beltrami operators parameterized by Riemannian metrics. The main result of \Cref{subsection: specialization to Laplace-Beltrami operator} (Proposition \ref{proposition:submersion equivalent linear nondeg}) has previously been derived by Besson in \cite[Sections 4A  and 7A]{Besson1989}, building on related, though less general, computations by Bleecker and Wilson in \cite[Lemma 5.1.]{BleeckerWilson1980} as well as by Bando and Urakawa in \cite[Lemma 4.4. and Proposition 4.5.]{BandoUrakawa1983}.

\subsection{General setup for the construction of local defining functions and parameter transversality}
\label{subsection: technical}
The construction described here has appeared in various similar forms in the literature (e.g., \cite{CdV1988}, \cite{LupoMicheletti1993}, \cite{Micheletti1998}, \cite{Teytel1999}). In order to prove Theorem \ref{theorem:nontransverse} we will have to use smooth metrics and are thus forced to adapt these methods to a Fr\'{e}chet manifold setting. The requisite implicit function theorem needed for this as well the transversality theoretic results for this setting are presented in Appendix \ref{section: appendix 0} and Appendix \ref{section: appendix 1}, respectively. The setup presented here is furthermore directly applicable to multi-parameter families of operators which are self-adjoint with respect to a variable inner product, without having to find an explicit family of isometries which conjugates the problem into one on a fixed Hilbert space, as in \cite{CdV1988}. At the end of this section, our abstract setup is summarized in \Cref{theorem: general local structure theorem}.

Suppose we are given a separable Fr\'{e}chet manifold $\mathcal{X}$, a $C^1$ family of equivalent inner products $\{\langle-,-\rangle_q\}_{q\in\mathcal{X}}$ on a fixed real Hilbert space $H$, and a family of unbounded operators $\{A_q\}_{q\in \mathcal{X}}$, each self-adjoint with respect to $\langle-,-\rangle_q$ and with compact resolvent. Assume furthermore that the resolvents $(z - A_q)^{-1}$ form a $C^1$ family of bounded operators defined on a dense open subset of $\mathbb C \times \mathcal X$. Now fix a parameter value $q_0$ such that $A_{q_0}$ has an eigenvalue $\lambda$ of multiplicity $m$. Since the eigenvalues of $A$ move continuously with respect to the parameter by the variational characterization of eigenvalues \cite[Theorem 4.10]{GeraldTeschl2014}, there exist $\epsilon >0$ and an open neighbourhood $\mathcal U$ of $q_0$ such that the rank of the spectral projection
\begin{align*}
    P(q)=\frac{1}{2\pi i}\int_\gamma (\zeta - A_q)^{-1} d\zeta, \quad \gamma(t) = \lambda+\epsilon e^{i t} 
\end{align*}
is $m$ for all $q\in \mathcal U$. Define $E_\gamma(q) := \mathrm{Im} \, P(q) $. Consider the family of linear maps $S(q) := P(q_0) \circ P(q) \rvert_{E_\gamma(q_0)}: E_\gamma(q_0) \to E_\gamma(q_0)$. After possibly shrinking $\mathcal U$, $S(q)$ is an isomorphism for all $q \in \mathcal U$, since $P(q)$ varies continuously with $q$, and $P(q_0) \rvert_{E_\gamma(q_0)} = \mathrm{id}\rvert_{E_\gamma(q_0)}$. By construction, $S(q)^{-1} \circ P(q_0): H \to E_\gamma(q_0)$ is a $C^1$ family of left inverses to $P(q) \rvert_{E_\gamma(q_0)}$. Fix $\mu \in \mathbb R$ outside of the spectrum of all $A_q, q \in \mathcal U$, and define the map 
\begin{align*}
    f:\,\mathcal U&\to GL(E_{q_0})\\
     q &\mapsto  S(q)^{-1} \circ P(q_0) \circ \left(\mu - A_q\right)^{-1} \circ \left(P(q)\rvert_{E_\gamma(q_0)}\right).
\end{align*}
The eigenvalues of $f(q)$ are precisely the eigenvalues of $\left(\mu - A_q\right)^{-1}$ corresponding to eigenvalues of $A_q$ encircled by $\gamma$. Thus, $f^{-1}(\mathbb{R}\cdot \mathrm{id})$ contains precisely those $q\in \mathcal U$ for which $A_q$ has an eigenvalue of multiplicity $m$ in $(\lambda-\epsilon,\lambda + \epsilon)$. 

We want to use the map $f$ as a local defining function for the set of parameter values $q\in \mathcal U$ so that $A_q$ has an eigenvalue of multiplicity $m$ that is close to $\lambda$. To achieve this, $f$ has to be modified further to account for the fact that $f(q)$ is symmetric with respect to the inner product $\left<P(q) \, \cdot\, , P(q) \, \cdot \, \right>_q$, which of course depends on $q$. Since this inner product is continuously differentiable in $q$, we may perform Gram-Schmidt orthonormalization to obtain a $C^1$ family of linear isomorphisms $Q(q): \mathbb R^m \to E_{q_0}$ such that $\left<P(q) Q(q) e_k , P(q) Q(q) e_\ell \right>_q = \delta_{k\ell}$. Then, symmetry of $(\mu - A_q)^{-1}$ with respect to $\left< \, \cdot \, , \, \cdot \, \right>_q$ shows that $Q(q)^{-1} \circ f(q) \circ Q(q)$ is a symmetric $m\times m$ matrix, allowing us to define
\begin{align*}
    \pi_\mu:\,\mathcal U & \to \mathbb R^{\frac{m(m+1)}{2}} \\
     q &\mapsto  Q(q)^{-1} \circ f(q) \circ Q(q).
\end{align*}

As a composition of $C^1$ families of linear maps and their inverses (which are $C^1$, e.g, by \cite[Theorem II.3.1.1]{Hamilton1982}), $\pi_\mu$ is $C^1$. Denote $R(q) = (\mu - A_{q})^{-1}$ and $\tilde P(q) = P(q)\rvert_{E_\gamma(q_0)}$. Given $h\in T_{q_0} \mathcal{X}$, we first calculate
\begin{align*}
    Df (q_0) [h]= 
    &- S(q_0)^{-1} \circ DS(q_0)[h] \circ S(q_0)^{-1} \circ P(q_0) \circ R(q_0) \circ \tilde P(q_0) \\ &+ S(q_0)^{-1} \circ P(q_0) \circ DR(q_0)[h] \circ \tilde P(q_0) \\ &+ S(q_0)^{-1} \circ P(q_0) \circ R(q_0) \circ D \tilde P(q_0)[h] \\
    & = - \tfrac{1}{\mu - \lambda} DS(q_0)[h] + P(q_0) \circ DR(q_0)[h] +  P(q_0) \circ R(q_0) \circ D \tilde P(q_0)[h],
\end{align*}
where we used $S(q_0) = \mathrm{id}$ and that $ R(q_0)$ acts on $E_{\gamma}(q_0)$ via multiplication by $\frac{1}{\mu - \lambda}$. Furthermore, $P(q_0) \circ R(q_0) = \frac{1}{\mu - \lambda} P(q_0)$. This allows us to further simplify
\begin{align*}
    Df (q_0) [h] &= 
     P(q_0) \circ DR(q_0)[h] - \tfrac{1}{\mu - \lambda} \left( DS(q_0)[h] - P(q_0) \circ D \tilde P(q_0)[h] \right) \\
     &= P(q_0) \circ DR(q_0)[h],
\end{align*}
because the term in brackets vanishes by the product rule. Now the derivative of $\pi_\mu$ at $q_0$ is computed to be
\begin{align*}
    D\pi_\mu (q_0) [h] &= 
     - Q(q_0)^{-1} \circ DQ(q_0)[h] \circ Q(q_0)^{-1} \circ f(q_0) \circ Q(q_0) \\
     & + Q(q_0)^{-1} \circ Df(q_0)[h] \circ Q(q_0) + Q(q_0)^{-1} \circ f(q_0) \circ DQ(q_0)[h] \\
     &= Q(q_0)^{-1} \circ Df(q_0)[h] \circ Q(q_0) = Q(q_0)^{-1} \circ P(q_0) \circ DR(q_0)[h] \circ Q(q_0),
\end{align*}
since $f(q_0) = \frac{\mathrm{id}}{\mu-\lambda}$. To obtain the entries of the matrix $D\pi_\mu (q_0) [h]$, let $\{e_k\}_{k=1}^m$ denote the standard basis of $\mathbb R^m$, so that $\big\{Q(q_0)e_k\big\}_{k=1}^m$ is an orthonormal basis of the eigenspace $E_\gamma(q_0)$. For $1 \leq k,\ell \leq m$, we have
\begin{align*}
    \left\langle D\pi_\mu (q_0) [h] e_k, e_\ell \right\rangle_{\mathbb R^m} &=
    \left\langle P(q_0) \circ DR(q_0)[h] \circ Q(q_0) e_k , Q(q_0) e_\ell \right\rangle_{q_0} \\
    &= \left\langle DR(q_0)[h] \circ Q(q_0) e_k , Q(q_0) e_\ell \right\rangle_{q_0}.
\end{align*}
It remains to remove the dependence on the arbitrary choice of resolvent. Note that, for all $q \in \mathcal U$, $A_q \rvert_{E_\gamma(q)} = \mu - R(q)^{-1}$. Thus, if we define $\pi(q) = \mu - \pi_\mu(q)^{-1}$, then the spectrum of $A_q$ encircled by $\gamma$ agrees with that of the matrix $\pi(q)$. Since $\pi_\mu(q_0) = \frac{\mathrm{id}}{\mu - \lambda}$, the quotient rule implies
\begin{align*}
    \left\langle D\pi (q_0) [h] e_k, e_\ell \right\rangle_{\mathbb R^m} &= \left\langle (\mu - \lambda)^2 DR(q_0)[h] \circ Q(q_0) e_k , Q(q_0) e_\ell \right\rangle_{q_0}.
\end{align*}
This matrix is, by construction, independent of the choice of $\mu$. Alternatively, one may also differentiate the first resolvent identity to show that the operator $(\mu - \lambda)^2 P(q_0) \circ DR(q_0)[h] \rvert_{E_\gamma(q_0)}$ is independent of this choice.

In the case that all operators $A_q$ share the same domain (which will be true for the Laplace-Beltrami operators on a closed Riemannian manifold $M$), one can arrive at the expression $\left\langle (\mu - \lambda)^2 DR(q_0)[h] \circ Q(q_0) e_k , Q(q_0) e_\ell \right\rangle_{q_0}$ directly, without considering the resolvent.
More precisely, suppose there is a fixed Banach space $D$ which embeds continuously into $H$, and $A: \mathcal X \to \mathcal L(D,H)$ is a $C^1$ family of bounded linear operators. Differentiating the identity $R(q)\circ (\mu-A_q) = \mathrm{id}_{D}$ yields
\begin{align*}
    (\mu-\lambda) DR(q_0)[h]\rvert_{E_\gamma(q_0)} = - R(q_0) DA(q_0)[h] \rvert_{E_\gamma(q_0)},
\end{align*}
from which, since $P(q_0) \circ R(q_0) = (\mu-\lambda)^{-1} P(q_0)$, it follows that
\begin{align*}
    (\mu-\lambda)^2 P(q_0) \circ DR(q_0)[h]\rvert_{E_\gamma(q_0)} = - P(q_0) \circ DA(q_0)[h] \rvert_{E_\gamma(q_0)},
\end{align*}
If we denote the orthonormal basis $\{Q(q_0) e_k\}_{k=1}^m$ by $\{u_k\}_{k=1}^m$, we can thus write $D\pi(q_0)$ in a very convenient way:
\begin{align}
    D\pi(q_0)[h]_{k\ell} = \left\langle DA(q_0)[h] u_k , u_\ell \right\rangle_{q_0}
\end{align}
It is this expression that we will work with in the following sections. 

Surjectivity of $D\pi(q)$ is an open condition, since $\pi$ has a finite dimensional codomain. Thus, if $D\pi(q_0)$ is surjective, we can find an open neighborhood $\mathcal U$ of $q_0$ such that $\pi \rvert_\mathcal U$ is a submersion. In particular, as mentioned in \Cref{sec:preliminaries} (Preliminaries), this means that the set of parameters $q$ such that that the eigenvalue $\lambda$ does not split up is a cofinite Fr\'{e}chet submanifold, because it is given as the preimage $\pi^{-1}(\mathbb R \cdot \mathrm{id})$, i.e., the preimage of a manifold under a submersion.

We collect the results of the preceding discussion in the following

\begin{proposition}
\label{theorem: general local structure theorem}
    Let $\mathcal{X}$ be a Fr\'{e}chet manifold, $H$ a Hilbert space, $\left\{\langle-,-\rangle_q\right\}_{q\in \mathcal{X}}$ a $C^1$ family of inner products on $H$, and $\{A_q\}_{q\in \mathcal{X}}$ a family of unbounded operators, each self-adjoint with respect to $\langle-,-\rangle_q$ and with compact resolvent. Assume that the resolvent $(z-A_q)^{-1}$ is a $C^1$ family of bounded operators defined on a dense open subset of $\mathbb C \times \mathcal X$.
    
    Suppose $\lambda$ is an eigenvalue with multiplicity $m$ of $A_{q_0}$ for some $q_0 \in \mathcal X$. Denote $R(q) = (\mu-A_q)^{-1}$, for some $\mu \in \rho(A_{q_0}) \cap \mathbb R$, and choose an orthonormal basis $\{u_k\}_{k=1}^m$ of the $\lambda$-eigenspace of $A_{q_0}$. If the linear map
    \begin{align*}
        h \mapsto \big(\left\langle DR(q_0)[h] u_k, u_\ell \right\rangle\big)_{k,\ell = 1}^m
    \end{align*}
    from $T_{q_0} \mathcal X$ to $\mathbb R^{\frac{m(m+1)}{2}}$ is surjective, there exist an open neighborhood $\mathcal U$ of $q_0$ in $\mathcal{X}$, a $C^1$ submersion $\pi: \mathcal U \rightarrow \mathbb R^{\frac{m(m+1)}{2}}$, and $\varepsilon > 0$, such that 
    \begin{align}
        &\sigma(A_{q}) \cap (\lambda-\varepsilon,\lambda+\varepsilon) = \sigma(\pi(q)),
    \end{align}
    counted with multiplicity, for all $q \in \mathcal U$. If, additionally, all operators $A_q$ share the same domain $D$, the linear map 
    \begin{align*}
        h \mapsto  \big( \left\langle DA(q_0)[h] u_k, u_\ell \right\rangle \big)_{k,\ell = 1}^m
    \end{align*}
    is well-defined and it is sufficient to check its surjectivity instead.
    
    The preimage $\pi^{-1}(\mathbb{R}\cdot \mathrm{id})$ is a Fr\'{e}chet submanifold of codimension $\frac{m(m+1)}{2}-1$.
\end{proposition}

In view of this proposition we can formulate the strong Arnold hypothesis introduced by Colin de Verdi\`ere \cite{CdV1988} very conveniently.

\begin{definition}
    The family of operators $\{A_q\}_{q\in \mathcal{X}}$ is said to satisfy the strong Arnold hypothesis (SAH) if $\pi$ is a submersion for all $q$ and $\lambda$. We say it satisfies the SAH at $q$ and $\lambda$ if $\pi$ is a submersion at $q$ for the eigenvalue $\lambda$.
\end{definition}

If the family $A_q$ satisfies the strong Arnold hypothesis, one can prove a non-crossing rule akin to \Cref{theorem:nonCrossing} in this very abstract setting provided that $\mathcal X$ is separable. This hypothesis is needed in several places to extract countable open covers, using the fact that a separable metric space is Lindelöf (i.e, every open cover admits a countable subcover). The family of Laplace-Beltrami operators does not satisfy the SAH, and so a proof of \Cref{theorem:nonCrossing} must wait until the end of \Cref{section: nontransverse eigenvalues}. Assuming the SAH, the proof of \Cref{theorem:nonCrossing} can be carried over to the abstract setting of \Cref{theorem: general local structure theorem}. We omit this to avoid overloading this article.

\begin{comment}
\begin{theorem}
    Let $\{A_q\}_{q \in \mathcal X}$ be as in \Cref{theorem: general local structure theorem}. If the set of parameters $q \in \mathcal X$ such that there exists $\lambda \in \sigma(A_q)$ such that the SAH is not satisfied at $q$ and $\lambda$ is of infinite codimension, then the weak Arnold condition holds for all $q \in \mathcal X$.
\end{theorem}

\begin{proof}
    Fix $q \in \mathcal X$ and $\lambda \in \sigma(A_q)$.
\end{proof}
\end{comment}

\subsection{The case of the Laplace-Beltrami operator: Proof of Theorem \ref{theorem:transverse structure theorem}}\label{subsection: specialization to Laplace-Beltrami operator}
We now specialize the results of the preceding section to $\mathcal{X}=\mathcal{G} (M)$, the space of smooth metrics on a closed manifold $M$, or a fixed conformal class $\mathcal{X}=\mathcal{G} (M,g_0)$, and $A_q=\Delta_g$, the Laplace-Beltrami operator. We stress that the following Proposition was proven by Besson in \cite{Besson1989}, for the conformal case in \cite[Section 4A]{Besson1989} and for the general case in \cite[Section 7A]{Besson1989}. We supply its proof both for the convenience of the reader and in order to streamline the literature on this subject.

\begin{proposition}\cite[Sections 4A and 7A]{Besson1989}
\label{proposition:submersion equivalent linear nondeg}
    Let $M$ be a closed smooth manifold of dimension $n \geq 2$. The following are equivalent:
    \begin{enumerate}
        \item $\{\Delta_g\}_{g\in \mathcal{G}(M)}$ does not satisfy the SAH at $g$ and $\lambda$. \label{statement 1}
        \item There exist a basis $\{u_\alpha\}$ of $E_\lambda$ and $A \in \mathbb R^{\frac{m(m+1)}{2}}\setminus\{0\}$ such that
    \begin{align}
    \label{equation: localization}
        A^{\alpha \beta}\bigg(\frac{\lambda}{2}u_\alpha u_\beta \, g+\frac{1}{2} g(\nabla u_\alpha,\nabla u_\beta) \, g - du_\alpha \otimes du_\beta \bigg) \equiv 0.
    \end{align}\label{statement 2}
        \item There exist a basis $\{u_\alpha\}$ of $E_\lambda$ and $A\in \mathbb R^{\frac{m(m+1)}{2}}\setminus\{0\}$ such that
        \begin{align*}
            A^{\alpha\beta}u_\alpha u_\beta \equiv 0
        \end{align*}
        and 
        \begin{align*}
            A^{\alpha\beta} du_\alpha \otimes du_\beta \equiv 0.
        \end{align*}\label{statement 3}
        \item $\lambda$ is an unstable eigenvalue of $\Delta_g$. \label{statement 4}
    \end{enumerate}
Similarly, in the conformal setting, the following are equivalent:
    \begin{enumerate}
        \item $\{\Delta_g\}_{g\in \mathcal{G}(M,g_0)}$ does not satisfy the SAH at $g$ and $\lambda$.
        \item There exist a basis $\{u_\alpha\}$ of $E_\lambda$ and $A \in \mathbb R^{\frac{m(m+1)}{2}} \setminus\{0\}$ such that
    \begin{align*}
    A^{\alpha\beta}\bigg(\lambda u_\alpha u_\beta + \frac{n-2}{n} g(\nabla u_\alpha, \nabla u_\beta) \bigg)\equiv 0.
    \end{align*}
        \item There exist a basis $\{u_\alpha\}$ of $E_\lambda$ and $A\in \mathbb R^{\frac{m(m+1)}{2}} \setminus\{0\}$ such that
        \begin{align*}
            A^{\alpha\beta}u_\alpha u_\beta \equiv 0.
        \end{align*}
        \item $\lambda$ is a conformally unstable eigenvalue of $\Delta_g$.
    \end{enumerate}
\end{proposition}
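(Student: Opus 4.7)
My plan is to establish the four-way equivalence as a chain (1)$\Leftrightarrow$(2)$\Leftrightarrow$(3)$\Leftrightarrow$(4), handling the general and conformal settings in parallel. For (1)$\Leftrightarrow$(2) I would apply \Cref{theorem: general local structure theorem} with an $L^2(g)$-orthonormal basis $\{u_k\}$ of $E_\lambda$. The essential input is the variational formula for the Laplacian, which I would derive by differentiating the weak identity $\int (\Delta_g u)\, v\, d\mu_g = -\int g(\nabla u, \nabla v)\, d\mu_g$ along $g + th$ and using the standard formulae $\delta g^{ij} = -h^{ij}$ and $\delta\, d\mu_g = \tfrac{1}{2}\tr_g h \cdot d\mu_g$. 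This gives
\begin{align*}
\langle D\Delta_{(\cdot)}(g)[h]\, u_k,\, u_\ell\rangle_{L^2(g)} = \int_M \bigl\langle h,\; du_k \otimes du_\ell - \tfrac{1}{2}\, g(\nabla u_k, \nabla u_\ell)\, g - \tfrac{\lambda}{2}\, u_k u_\ell\, g\bigr\rangle_g\, d\mu_g.
\end{align*}
Since the codomain in \Cref{theorem: general local structure theorem} is finite-dimensional, non-surjectivity of $h \mapsto (\langle D\Delta(g)[h]\, u_k, u_\ell\rangle)_{k,\ell}$ amounts to the existence of a nonzero symmetric matrix $(A^{k\ell})$ annihilating the image. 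The fundamental lemma of the calculus of variations, applied as $h$ ranges over all smooth symmetric $(0,2)$-tensor fields, then converts this into the pointwise identity (\ref{equation: localization}). The conformal case is handled identically after restricting to pure-trace perturbations $h = \phi g$: the integrand collapses to $\phi$ times the $g$-trace of the defining tensor, and a short trace computation delivers the stated conformal formula after multiplying through by $-2/n$.

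For (2)$\Leftrightarrow$(3), the direction (3)$\Rightarrow$(2) is immediate: if $F := A^{\alpha\beta} u_\alpha u_\beta$ and $H := A^{\alpha\beta} du_\alpha \otimes du_\beta$ both vanish, then $G := A^{\alpha\beta} g(\nabla u_\alpha, \nabla u_\beta) = \tr_g H = 0$, and (\ref{equation: localization}) holds trivially. For the converse, my key input is the product-rule identity
\begin{align*}
\Delta_g F = A^{\alpha\beta}\bigl(u_\alpha \Delta_g u_\beta + u_\beta \Delta_g u_\alpha + 2\, g(\nabla u_\alpha, \nabla u_\beta)\bigr) = 2\lambda F + 2G,
\end{align*}
combined with the $g$-trace of (\ref{equation: localization}), which reads $n\lambda F + (n-2) G = 0$. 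For $n = 2$ the trace identity directly gives $F \equiv 0$. For $n \geq 3$ I would eliminate $G$ to obtain $\Delta_g F = -\tfrac{4\lambda}{n-2} F$, an eigenvalue equation for $F$ whose eigenvalue is strictly positive since $\lambda < 0$; this contradicts the non-positivity of the Laplace spectrum on a closed manifold unless $F \equiv 0$. Either way $G \equiv 0$ follows, and (\ref{equation: localization}) then forces $H \equiv 0$, yielding (3). The conformal case is handled in the same way, using the scalar equation $\lambda F + \tfrac{n-2}{n} G = 0$ in place of the trace identity.

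For (3)$\Leftrightarrow$(4), the argument is purely linear-algebraic. Given (3), I would orthogonally diagonalize $A = \sum_j \mu_j v_j v_j^\top$ and set $\tilde u_j = \sum_\alpha (v_j)^\alpha u_\alpha$, so that the two identities of (3) rewrite as $\sum_j \mu_j \tilde u_j^{\,2} = 0$ and $\sum_j \mu_j\, d\tilde u_j \otimes d\tilde u_j = 0$. Rescaling $w_j = \sqrt{|\mu_j|}\, \tilde u_j$ and retaining only those $j$ with $\mu_j \neq 0$ yields (4) with signs $\epsilon_j = \mathrm{sgn}(\mu_j)$. The reverse direction is obtained by extending the $u_j$ in (4) to a basis of $E_\lambda$ and taking $A$ to be diagonal with entries $\epsilon_1, \ldots, \epsilon_k, 0, \ldots, 0$ in that basis. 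The same construction, applied only to the scalar identity, handles the conformal case.

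The main obstacle I anticipate is the implication (2)$\Rightarrow$(3): one must recognize that the $g$-trace of (\ref{equation: localization}), combined with the product-rule formula for $\Delta_g(u_\alpha u_\beta)$, encodes a hidden eigenvalue equation for the scalar $F$ whose eigenvalue is forbidden precisely because the Laplace spectrum on a closed manifold is non-positive. Once this observation is isolated, all remaining implications unfold mechanically.
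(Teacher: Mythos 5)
Your proposal is correct and follows essentially the same route as the paper: the same integration-by-parts computation and localization argument for (1)$\Leftrightarrow$(2), the same trace-then-apply-Laplacian argument leading to the eigenvalue equation $\Delta_g F = -\tfrac{4\lambda}{n-2}F$ (forbidden on a closed manifold since $\lambda<0$) for (2)$\Leftrightarrow$(3), and the same orthogonal-diagonalization-and-rescaling argument for (3)$\Leftrightarrow$(4). The only cosmetic difference is that you derive the variational formula by differentiating the weak Dirichlet identity rather than first computing $D_h\Delta_g$ in coordinates and then integrating by parts, which yields the same integrand.
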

\begin{proof}
    $\textbf{(\ref{statement 1})}\iff\textbf{(\ref{statement 2})}$
We express the Laplace operator in local coordinates:
\begin{align*}
    \Delta_g u=|g|^{-\frac{1}{2}}\partial_j (|g|^{\frac{1}{2}}g^{jk}\partial_k u)
\end{align*}
where by $|g|$ we mean the determinant of $g$. Given a symmetric $(0,2)$-tensor field $h$ and using the identities
\begin{align*}
    D_h |g|^s=s\cdot tr_g (h) |g|^s,\quad D_h g^{ij}=-h^{ij}, 
\end{align*}
one can compute the variation of $\Delta_g$ in direction $h$:
\begin{align*}
    D_h\Delta_g u = &-\frac{1}{2}tr_g (h)\Delta_g u + \frac{1}{2}|g|^{-\frac{1}{2}}\partial_j (tr_g (h)|g|^{\frac{1}{2}}g^{jk}\partial_k u)
    - |g|^{-\frac{1}{2}}\partial_j (|g|^{\frac{1}{2}}h^{jk}\partial_k u)\\
    = & \frac{1}{2}\partial_j (tr_g (h)) g^{jk}\partial_k u - |g|^{-\frac{1}{2}}\partial_j (|g|^{\frac{1}{2}}h^{jk}\partial_k u)
\end{align*}
We now compute the $L^2$ inner product
\begin{align*}
    \langle u, D_h\Delta_g v\rangle = & \frac{1}{2}\int_M \partial_j (tr_g (h)) g^{jk}(\partial_k u) v\, d\mu_g -\int_M |g|^{-\frac{1}{2}}\partial_j (|g|^{\frac{1}{2}}h^{jk}\partial_k u) v \,d\mu_g \\
    %= & -\frac{1}{2} \int_M (tr_g (h))(\Delta_g u) v \,d\mu_g -\frac{1}{2}\int_M (tr_g (h))g(\nabla u,\nabla v) \,d\mu_g + \int_M h^{jk}[du \otimes dv]_{jk} \,d\mu_g \\
    = & - \frac{1}{2} \int_M \left( tr_g (h)(\Delta_g u) v + tr_g (h)g(\nabla u,\nabla v) - 2 h^{jk}[du \otimes dv]_{jk} \right) \,d\mu_g \\
    = & \int_M h^{jk} \left( -\frac{1}{2} g_{jk} (\Delta_g u) v -\frac{1}{2} g_{jk} g(\nabla u,\nabla v) + [du \otimes dv]_{jk} \right) \,d\mu_g \\
\end{align*}

Pick an arbitrary basis $\{u_\alpha\}$ of $E_\lambda$. If the SAH is not satisfied at $g$, then \Cref{theorem: general local structure theorem} implies there exists a nonzero symmetric matrix $A \in \mathbb R^{\frac{m(m+1)}{2}}$ such that for all $h$, $A^{\alpha\beta}\langle u_\alpha,D_h\Delta_g u_\beta\rangle = 0$. We obtain
\begin{align*}
    0 = & \int_M h^{jk} A^{\alpha\beta}\left( \frac{\lambda}{2} g_{jk} u_\alpha u_\beta +\frac{1}{2} g_{jk} g(\nabla u_\alpha,\nabla u_\beta) - [du_\alpha \otimes du_\beta]_{jk} \right) \,d\mu_g 
\end{align*}
for all symmetric $(0,2)$-tensor fields $h$. Since the expression in brackets is itself a symmetric $(0,2)$-tensor field, this is equivalent to
\begin{align*}
    0 \equiv A^{\alpha\beta}\left( \frac{\lambda}{2} u_\alpha u_\beta \, g +\frac{1}{2} g(\nabla u_\alpha,\nabla u_\beta) \, g - du_\alpha \otimes du_\beta \right).
\end{align*}
\par

$\textbf{(\ref{statement 2})}\iff\textbf{(\ref{statement 3})}$
The converse direction is clear. For the other direction, we contract equation \ref{equation: localization} with the metric $g$ 
    \begin{align}\label{equation: contracted degeneracy}
        A^{\alpha\beta}\bigg(\frac{n\lambda}{2}u_\alpha u_\beta + \frac{n-2}{2} g(\nabla u_\alpha, \nabla u_\beta) \bigg)=0
    \end{align}
In dimension $n=2$, this implies that $A^{\alpha\beta}u_\alpha u_\beta=0$. Applying the Laplacian to this identity yields
\begin{align*}
    0=\Delta_g A^{\alpha\beta}u_\alpha u_\beta=2 A^{\alpha\beta}u_\alpha \Delta_g u_\beta + 2 A^{\alpha\beta} g(\nabla u_\alpha,\nabla u_\beta)=2 A^{\alpha\beta} g(\nabla u_\alpha,\nabla u_\beta)
\end{align*}
Reinserting these two expressions into equation \ref{equation: localization} gives the desired conclusion.\\
In dimension $n\geq 3$, consider the function $F=A^{\alpha\beta}u_\alpha u_\beta$ and apply the Laplacian
\begin{align*}
    \Delta_g F&=2 A^{\alpha\beta}u_\alpha \Delta_g (u_\beta) + 2 A^{\alpha\beta} g(\nabla u_\alpha,\nabla u_\beta)\\
    &= (2\lambda-2\lambda \frac{n}{n-2})A^{\alpha\beta}u_\alpha u_\beta\\
    &=2\lambda \big(1-\frac{n}{n-2}\big) F
\end{align*}
where we used equation \ref{equation: contracted degeneracy}. Note that the coefficient of $F$ on the right is positive. It follows that $F\equiv 0$. \par

$\textbf{(\ref{statement 3})}\iff\textbf{(\ref{statement 4})}$

The matrix $A$ is symmetric, and hence, it can be diagonalized. Let $B^\gamma_\delta$ be a change of basis such that $A^{\alpha\beta}=B^\alpha_\gamma D^{\gamma\delta}B^\beta_\delta$, where $D$ is diagonal. Then
\begin{align*}
    A^{\alpha\beta}u_\alpha u_\beta = B^\alpha_\gamma D^{\gamma\delta}B^\beta_\delta u_\alpha u_\beta =D^{\gamma\delta}v_\gamma v_\delta
\end{align*}
where $v_\gamma = B^\alpha_\gamma u_\alpha$. Upon rescaling the basis vectors $\{v_\gamma\}$ we may assume that the diagonal entries of $D$ are valued in $\{-1,0,1\}$. The basis $\{v_\alpha\}$ may be chosen to be orthogonal: One may choose the original basis $\{u_\alpha\}$ orthonormal, and diagonalize the symmetric matrix $A^{\alpha\beta}$ by an orthogonal change of basis.
\par
The proof for the conformal case is very similar. We can again use integration by parts to show that $\pi$ is a submersion if and only if there is no basis $\{u_\alpha \}$ and $A\in \mathbb R^{\frac{m(m+1)}{2}}$ so that the traced version of equation \ref{equation: localization} holds. Proving the equivalence of $(2)$ and $(3)$ again works by applying the Laplacian to $A^{\alpha\beta}u_\alpha u_\beta$. 
\end{proof}

\begin{remark}
    We would like to point out that a version of the first equivalence in Proposition \ref{proposition:submersion equivalent linear nondeg} always works for self-adjoint differential operators on some $L^2$ as long as the space of variations is closed under multiplication with bump functions. One can use integration by parts to isolate $h$ in $D_h A$ and then localize. The resulting expression may be unwieldy (this is the case, e.g., for the Hodge Laplacian).

A particularly interesting class of examples is that of conformally covariant operators on vector bundles equipped with a metric $g$ as treated by Canzani \cite{Canzani2014}. An inspection of the conformal variation computed in \cite[Proposition 5.4]{Canzani2014} allows us to conclude that the failure of the SAH at a metric $g$ for an eigenvalue $\lambda$ in the conformal class is equivalent to the following:
            There exists a basis $\{u_\alpha\}$ of $E_\lambda$ and an element $A\in \mathbb R^{\frac{m(m+1)}{2}}$ so that
        \begin{align*}
            A^{\alpha\beta} g(u_\alpha , u_\beta)  \equiv 0.
        \end{align*}
Specializing to conformally covariant operators acting on smooth functions, this becomes
        \begin{align*}
            A^{\alpha\beta} u_\alpha  u_\beta  \equiv 0.
        \end{align*}
just like for the Laplace-Beltrami operator. Canzani proved that this condition is never satisfied in multiplicity $2$, i.e., that there are no conformally unstable eigenvalues of multiplicity $2$ in this setting \cite[Proposition 5.6]{Canzani2014}. Actually her proof shows something stronger, namely that even in higher multiplicity there can never be a pair of eigenfunctions $u_1$ and $u_2$ such that the above identity is satisfied. Inspecting the proof of Proposition \ref{theorem: general local structure theorem} it is not hard to see that this implies a $1$-parameter non-crossing rule for all conformally covariant operators acting on smooth functions. 

The preceding observations and the proof of \Cref{theorem:nontransverse} (\Cref{section: nontransverse eigenvalues}), which develops techniques for showing that relations between eigenfunctions of the form $A^{\alpha\beta} u_\alpha  u_\beta  \equiv 0$ are rarely satisfied, suggest that our methods can be applied to a great variety of operators.
\end{remark}

\Cref{theorem:transverse structure theorem} follows directly by combining \Cref{theorem: general local structure theorem} and \Cref{proposition:submersion equivalent linear nondeg}.

Theorem \ref{theorem:transverse structure theorem}, together with the cofinite transversality theory presented in \Cref{section: appendix 1}, already implies that Theorem \ref{theorem:nonCrossing} holds in the complement of those metrics admitting an unstable eigenvalue. In order to establish Theorem \ref{theorem:nonCrossing} in full we still have to show that the set of such metrics is small. A careful proof of \Cref{theorem:nonCrossing} is therefore presented at the end of \cref{section: nontransverse eigenvalues}.

Before proceeding with our analysis of unstable eigenvalues, we show that they are in fact fully characterized by the condition $\sum_{j=1}^k \epsilon_j \, du_j \! \otimes \! du_j = 0$ (\Cref{equation: linear degeneracy 2}), since this condition already implies $\sum_{j=1}^k \epsilon_j  u_j^2 = 0$ (\Cref{equation: linear degeneracy 1}). This fact will not be used in the rest of the text.

\begin{proposition}
    \label{proposition: further reduction}
    Let $(M,g)$ be a closed Riemannian manifold. Suppose there exist $m$ eigenfunctions $\{u_\alpha\}_{\alpha=1}^m$ of $\Delta_g$, corresponding to an eigenvalue $\lambda < 0$, and a symmetric matrix $A^{\alpha\beta} \in \mathbb R^{\frac{m(m+1)}{2}}$, such that
    \begin{align*}
        A^{\alpha\beta} \, du_\alpha \! \otimes \! du_\beta = 0.
    \end{align*}
    Then $A^{\alpha\beta} u_\alpha u_\beta = 0$ as well.
\end{proposition}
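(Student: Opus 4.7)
The plan is to set $F := A^{\alpha\beta} u_\alpha u_\beta$ and show $F \equiv 0$. First I would compute, using symmetry of $A$, that $\nabla F = 2 A^{\alpha\beta} u_\alpha \nabla u_\beta$ and
\[
\Delta F \;=\; 2 A^{\alpha\beta}\bigl(u_\alpha \Delta u_\beta + g(\nabla u_\alpha, \nabla u_\beta)\bigr) \;=\; 2\lambda F + 2 \operatorname{tr}_g\!\bigl(A^{\alpha\beta} du_\alpha \otimes du_\beta\bigr) \;=\; 2\lambda F,
\]
where the last equality uses that the hypothesis tensor vanishes identically and hence so does its metric trace. Consequently $F$ is either identically zero (which is the desired conclusion) or a nontrivial eigenfunction of $\Delta_g$ with eigenvalue $2\lambda < 0$.

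The remainder of the proof would be devoted to excluding the second case. I would first diagonalize $A$ by an orthogonal change of basis among the $u_\alpha$ together with rescaling, reducing to $A = \operatorname{diag}(\epsilon_1,\ldots,\epsilon_m)$ with $\epsilon_j \in \{-1,+1\}$; the hypothesis then reads $\sum_j \epsilon_j\, du_j \otimes du_j = 0$. Polarizing this vanishing tensor against pairs of gradients yields the pointwise matrix identity
\[
YEY = 0, \qquad\text{where } Y_{\alpha\beta} := g(\nabla u_\alpha, \nabla u_\beta),\quad E := \operatorname{diag}(\epsilon_j),
\]
which says exactly that the image of $du : TM \to \mathbb R^m$ is an $E$-totally-isotropic subspace at every point of $M$.

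Using $|\nabla F|^2 = 4(Eu)^T Y (Eu)$ together with $YEY = 0$ and the eigenvalue equation, my plan is to derive the pointwise identity $|\nabla F|^2 = -2\lambda F^2$. Once this is in hand, $\Delta(F^2) = 4\lambda F^2 + 2|\nabla F|^2 = 0$, so $F^2$ is harmonic on the closed manifold $M$ and therefore constant; since $2\lambda \neq 0$, a constant $2\lambda$-eigenfunction must vanish, and we conclude $F \equiv 0$. The main obstacle I anticipate is precisely this pointwise estimate: the integral version $\int |\nabla F|^2\, d\mu = -2\lambda \int F^2\, d\mu$ is immediate from $\Delta F = 2\lambda F$, but upgrading it to a pointwise identity requires extracting the full algebraic content of $YEY = 0$ and of the isotropy of $\operatorname{image}(du)$ rather than just the scalar consequence $\operatorname{tr}_g Y \cdot A = 0$ that entered step one.
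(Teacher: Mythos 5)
Your first step, $\Delta_g F = 2\lambda F$ for $F := A^{\alpha\beta}u_\alpha u_\beta$, is correct, but it uses only the $g$-trace of the hypothesis tensor and therefore cannot force $F\equiv 0$ on its own: any $2\lambda$-eigenfunction satisfies that equation. Your observation $YEY=0$ (isotropy of the image of $du$) is also correct. The gap is exactly where you fear: the pointwise identity $|\nabla F|^2 = -2\lambda F^2$ is not a consequence of $YEY=0$ together with the eigenvalue equation evaluated at a point. The data $\bigl(u_j(p),\nabla u_j(p)\bigr)$ satisfying the isotropy constraint are far too free for such a quadratic relation. For instance, in the diagonalized picture with two functions and signs $(+1,-1)$, suppose $\nabla u_1(p)=\nabla u_2(p)=:v$; then the isotropy $vv^T-vv^T=0$ holds trivially, while $|\nabla F|^2 = 4(u_1-u_2)^2|v|^2$ and $-2\lambda F^2 = -2\lambda(u_1-u_2)^2(u_1+u_2)^2$ — there is no algebraic reason for these to agree. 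Moreover, what the hypothesis actually yields is $\nabla F\equiv 0$, a stronger first-order fact; note that once $\nabla F\equiv 0$ is known, your target identity $|\nabla F|^2 = -2\lambda F^2$ reduces precisely to $F\equiv 0$, so it is not an intermediate step so much as a restatement of the conclusion.

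The paper's proof takes the direct route. In coordinates, $A^{\alpha\beta}\partial_k u_\alpha\,\partial_\ell u_\beta = 0$; differentiating in $x^j$, contracting with $g^{k\ell}$, and using the symmetry of $A$ and $g$ gives $A^{\alpha\beta}g^{k\ell}\partial_j\partial_k u_\alpha\,\partial_\ell u_\beta = 0$. Then taking the $g$-divergence of $A^{\alpha\beta}\,\nabla u_\alpha\otimes\nabla u_\beta=0$ in the first slot and using $\Delta_g u_\alpha = \lambda u_\alpha$ yields $\lambda\,A^{\alpha\beta}u_\alpha\,\partial_\ell u_\beta = 0$, i.e.\ $\nabla F\equiv 0$. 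A second divergence (now of $A^{\alpha\beta}u_\alpha\,\nabla u_\beta\equiv 0$) gives $\lambda A^{\alpha\beta}u_\alpha u_\beta = 0$, hence $F\equiv 0$ since $\lambda\neq 0$; equivalently, $F$ is constant, and a constant $2\lambda$-eigenfunction with $\lambda\neq 0$ must vanish. The lesson is that the full tensorial hypothesis should be hit with a divergence, not merely traced and inserted into a Bochner-type computation for $F^2$.
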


\begin{proof}
    In coordinates, $A^{\alpha\beta} \, du_\alpha \! \otimes \! du_\beta = 0$ corresponds to the identity
    \begin{align*}
        A^{\alpha\beta} \partial_k u_\alpha \partial_\ell u_\beta = 0,
    \end{align*}
    for all $k,\ell = 1,\ldots,n$. Taking the derivative of this expression in the $\partial_j$-direction and contracting with $g^{k\ell}$ yields 
    \begin{align*}
        A^{\alpha\beta} g^{k\ell}\partial_j\partial_k u_\alpha \partial_\ell u_\beta + A^{\alpha\beta} g^{k\ell}\partial_k u_\alpha \partial_j\partial_\ell u_\beta = 0.
    \end{align*}
    On the other hand, because both $A$ and $g$ are symmetric,
    \begin{align*}
        &A^{\alpha\beta} g^{k\ell}\partial_j\partial_k u_\alpha \partial_\ell u_\beta =
        A^{\beta\alpha} g^{\ell k}\partial_j\partial_\ell u_\beta \partial_k u_\alpha =
        A^{\alpha\beta} g^{k\ell} \partial_k u_\alpha \partial_j\partial_\ell u_\beta,
    \end{align*}
    so $A^{\alpha\beta} g^{k\ell}\partial_j\partial_k u_\alpha \partial_\ell u_\beta = 0$. This allows us to reduce the expression which results from taking the divergence of $0 = A^{\alpha\beta} \, \nabla u_\alpha \! \otimes \! \nabla u_\beta$ in the first slot. In coordinates, this amounts to the following calculation, valid for all $\ell = 1,\ldots,n$:
    \begin{align*}
        0 &= A^{\alpha\beta} |g|^{- \frac{1}{2}} \partial_j \left( |g|^{\frac12} g^{jk} \partial_k u_\alpha \partial_\ell u_\beta \right) \\
        &= A^{\alpha\beta} \left( (\Delta_g u_\alpha) \partial_\ell u_\beta + g^{jk} \partial_k u_\alpha \partial_j\partial_\ell u_\beta \right) = 
        \lambda A^{\alpha\beta} u_\alpha \partial_\ell u_\beta
    \end{align*}
    Now, we take the divergence in the second slot and obtain
    \begin{align*}
        0 &= A^{\alpha\beta} |g|^{- \frac{1}{2}} \partial_j \left( u_\alpha |g|^{\frac12} g^{j\ell} \partial_\ell u_\beta \right) \\
        &= A^{\alpha\beta} \left( u_\alpha \Delta u_\beta + g^{j\ell} \partial_j u_\alpha \partial_\ell u_\beta \right) = \lambda A^{\alpha\beta} u_\alpha u_\beta,
    \end{align*}
    as desired.
\end{proof}

\section{Metrics with a (conformally) unstable eigenvalue are rare}
\label{section: nontransverse eigenvalues}

By the results of the preceding section, the behavior of stable multiple eigenvalues under perturbation of the metric parallels that of multiple eigenvalues of symmetric matrices. To get an unconditional result like \Cref{theorem:nonCrossing}, one must still prove that metrics with an unstable eigenvalue are rare. Examples of metrics with an unstable eigenvalue indeed exist, as the examples of Colin de Verdi\`ere (discussed in \Cref{section:low multiplicities and examples}) show. The aim of this section is to prove \Cref{theorem:nontransverse}, and thus establish \Cref{theorem:nonCrossing}.

The proof proceeds as follows: We first stratify the set of metrics admitting conformally unstable eigenvalues into several pieces which can be covered locally by cofinite Fr\'{e}chet submanifolds of arbitrarily high codimension. The key distinction turns out to be whether the fundamental solution to the Helmholtz equation $(\lambda - \Delta) u = \delta_p$ is a quotient of smooth functions whose denominator does not vanish to infinite order anywhere. Those strata where this is not the case are shown to be of infinite \tcodim \ using basic microlocal analysis (\Cref{prop:generalizednontransverseMetric}), and the other case is dealt with using heat kernel asymptotics (\Cref{lemma: smooth kernel}). A similar argument was employed in \cite{Zelditch1990} in the weaker context of showing that a related differential is nonzero, but seemingly resting on an erroneous analysis of the Green's function's asymptotics at the diagonal. The section concludes with a careful proof of \Cref{theorem:nonCrossing}, which at the same time provides the final arguments necessary to establish \Cref{theorem:nontransverse}.

It is convenient to introduce the ring of quotients of smooth functions whose denominator does not vanish to infinite order anywhere. We shall denote it by $\mathcal Q^\infty(M)$, and sometimes write $\frac{\varphi}{\psi} \in \mathcal Q^\infty(M)$ for its elements, where the denominator $\psi \in C^\infty(M)$ has finite vanishing order everywhere.

We begin by stratifying the set $\mathcal S \subseteq \mathcal G(M)$ of metrics admitting a conformally unstable eigenvalue.
\begin{enumerate}[label=(\roman*)]
    \item Let $\mathcal S_\infty(\lambda)$ denote those metrics for which $\lambda \in \sigma(\Delta_g)$, and the Schwartz kernel of the operator $L$ defined by $L(\lambda - \Delta_g) = (\lambda - \Delta_g)L = \mathbb I - P_\lambda$ is an element of $\mathcal Q^\infty(M\times M)$.
    \item For $m \in \mathbb N$, $k \in \{1,\ldots,\frac{m(m+1)}{2}\}$ and $\lambda \in (-\infty,0)$, let $\mathcal S_{m,k}(\lambda)$ denote the set of $g \in \mathcal G(M) \setminus \mathcal S_\infty(\lambda)$ such that $\lambda \in \sigma(\Delta_g)$ has multiplicity $m$, and, given a basis $\{u_\alpha\}_{\alpha=1}^m$ of $E_\lambda$, the equation $A^{\alpha\beta}u_\alpha(x) u_\beta(x) \equiv 0$ has exactly $k$ linearly independent solutions $A \in \mathbb R^{\frac{m(m+1)}{2}}$.
\end{enumerate}

The next proposition shows that the union of strata $\mathcal S_{m,k}(\lambda)$ can be covered locally by Fr\'{e}chet manifolds of finite, but arbitrarily high, codimension, if $\lambda$ is constrained in a suitably small interval.

\begin{proposition}
    \label{prop:generalizednontransverseMetric}
    Let $M$ be a closed, connected smooth manifold of dimension at least $2$. Consider a metric $g \in \mathcal S_{m,k}(\lambda)$, for some $m, k \in \mathbb N$, $\lambda \in (-\infty,0)$. Let $\gamma: \mathbb S^1 \rightarrow \mathbb C$ be a smooth, positively oriented Jordan curve encircling $\lambda$, but no other point of $\sigma(\Delta_g)$. 
    Let $\mathcal U$ be a neighborhood of $g$ in $\mathcal G(M)$ such that for each $\tilde g \in \mathcal U$, exactly $m$ eigenvalues of $\Delta_{\tilde g}$, counted with multiplicity, are encircled by $\gamma$. For $\tilde g \in \mathcal U$, denote the sum of the eigenspaces of these $m$ eigenvalues by $E_\gamma(\tilde g)$.
    
    Let $\widetilde{\mathcal S}_{m,k} \subseteq \mathcal U$ denote the set of all $\tilde g \in \mathcal U$ for which there exists a basis $\{\tilde u_\alpha\}_{\alpha=1}^m$  of $E_\gamma(\tilde g)$ and at least $k$ linearly independent matrices $\tilde A \in \mathbb R^{\frac{m(m+1)}{2}}$ with $\tilde A^{\alpha\beta} \tilde u_\alpha \tilde u_\beta = 0$. Then, for any $N \in \mathbb N$, the set $\widetilde{\mathcal S}_{m,k}$ is contained in a Fr\'{e}chet submanifold of $\mathcal G(M)$ of codimension $N$, after possibly shrinking $\mathcal U$. An identical statement also holds in any conformal class $\mathcal G(M,g_0)$.
\end{proposition}

Before commencing with the proof of \Cref{prop:generalizednontransverseMetric}, it is important to observe that the condition defining $\widetilde{\mathcal S}_{m,k}$ is independent of the choice of basis for $E_\gamma(\tilde g)$, since changing to a different basis just amounts to conjugating the matrix $\tilde A$. The key point of the above, somewhat technical, definition is that $\widetilde{\mathcal S}_{m,k}$ can be analyzed via the implicit function theorem, and $\mathcal S_{m,k}(\tilde \lambda) \cap \mathcal U \subseteq \widetilde{\mathcal S}_{m,k}$ for all $\tilde \lambda$ encircled by $\gamma$.

\begin{lemma}
\label{lemma:nontransverse defining function}
    \begin{enumerate}
        \item Possibly after shrinking $\mathcal U$, one continuously differentiable choice $\left(\tilde u_\alpha\right)_{\alpha = 1}^m$ of an eigenbasis for $E_\gamma(\tilde g)$ is given by $\tilde u_\alpha = P(\tilde g) u_\alpha$, where $P(\tilde g)$ denotes the spectral projection onto $E_\gamma(\tilde g)$.
        \item Define $F:\mathcal U \times \mathbb R^{\frac{m(m+1)}{2}} \rightarrow C^\infty(M)$ by
        \begin{align*}
            &F(\tilde g, \tilde A) =  \tilde A^{\alpha\beta}  P_\gamma(\tilde g) u_\alpha P_\gamma(\tilde g) u_\beta.
        \end{align*}
        Then $F$ is continuously differentiable. Its derivative at $(g,A)$ with respect to the metric is explicitly given by
        \begin{align*}
            &D_{\tilde g} F(g, A)[h] = 2 A^{\alpha\beta} (L\Delta'_h u_\alpha) u_\beta,
        \end{align*}
        where $\Delta'_h$ is shorthand for $D\Delta(g)[h]$ and $L: L^2(M) \rightarrow H^2(M)$ is the unique operator satisfying $(\lambda - \Delta_g) \circ L = \mathbb I - P_\lambda(g)$ and $L \circ (\lambda - \Delta_g) = \mathbb I - P_\lambda(g)$ on $L^2(M)$ and $H^2(M)$, respectively.
        \item Let $V \subseteq \mathbb R^{\frac{m(m+1)}{2}}$ be a subspace of codimension $k-1$ intersecting $\ker F(g,\cdot)$ transversally. Let $\mathcal A := \{ A \in V: \tr(A^TA) = 1\}$, and $F_1 = F\rvert_{\mathcal U \times \mathcal A}$. Then $\widetilde{\mathcal S}_{m,k} \subseteq \pi(F_1^{-1}(\{0\}))$, where $\pi: \mathcal U \times \mathcal A \to \mathcal U$ denotes the natural projection.
    \end{enumerate}
\end{lemma}

\begin{proof}
    We begin by differentiating under the contour integral for the spectral projection, justified by the dominated convergence theorem for Bochner integrals.
    \begin{align*}
        D_{g} P_\gamma &= D_{g} \left( \frac{1}{2\pi i} \int_\gamma (z - \Delta)^{-1} dz \right) = \frac{1}{2\pi i} \int_\gamma (z - \Delta)^{-1} \Delta' (z - \Delta)^{-1} dz.
    \end{align*}
    If $\Delta_{g} u = \lambda u$, we have
    \begin{align*}
        D_{g} P_\gamma(g) u &= \frac{1}{2\pi i} \left( \int_\gamma (z-\lambda)^{-1} (z - \Delta)^{-1} dz \right) \Delta' u.
    \end{align*}
    By the Riesz-Dunford calculus, $L = \frac{1}{2\pi i} \int_\gamma (z-\lambda)^{-1} (z - \Delta)^{-1} dz$ is the unique operator satisfying $L(\lambda - \Delta_g) = (\lambda - \Delta_g)L = \mathbb{I} - P_\lambda(g)$. The map $\tilde u_\alpha = P(\tilde g) u_\alpha$ is thus continuously differentiable, with derivative $D \tilde u_\alpha (g)[h] = L \Delta'_h u_\alpha$. Since linear independence is an open condition, $\{P(\tilde g) u_\alpha\}_{\alpha=1}^m$ forms a basis of $E_\gamma$ for all $\tilde g$ in some neighborhood $\mathcal U$ of $g$. Calculating $D_{g} F(g,A)$ from the expression $D \tilde u_\alpha (g)[h] = L \Delta'_h u_\alpha$ is straightforward. If $\tilde g \in \mathcal S_{m,k}$, then $\dim \ker F(\tilde g, \cdot) \geq k$, and hence $V \cap \ker F(\tilde g, \cdot) \neq \{0\}$. Thus, there exists $\tilde A \in \mathcal A$ with $F_1(\tilde g, \tilde A) = 0$.
\end{proof}

\begin{proof}[Proof of \Cref{prop:generalizednontransverseMetric}]
Let $A \in \mathbb R^{\frac{m(m+1)}{2}}$ be any nonzero symmetric matrix such that $F(g,A) = 0$, with $F$ defined as in \Cref{lemma:nontransverse defining function}. We will first show that $D_g F(g,A)$ is a pseudodifferential operator with non-smooth Schwartz kernel, because $g$ is not contained in the exceptional set $\mathcal S_\infty(\lambda)$. Pseudodifferential operators of \emph{finite rank} are smoothing operators, and therefore have smooth Schwartz kernels.
\footnote{Indeed, suppose $A$ is a $\Psi$DO such that $A(C^\infty(M)) \subseteq C^\infty(M)$ is finite dimensional. As $C^\infty(M)$ is dense in $\mathcal D'(M)$, $A(C^\infty(M))$ must be dense in $A(\mathcal D'(M))$. Since $A(C^\infty(M))$ is finite dimensional, now $A(\mathcal D'(M)) = A(C^\infty(M)) \subseteq C^\infty(M)$, so $A$ is a smoothing operator.}
Thus, $D_g F(g,A)$ must have infinite rank. Therefore, $\ker D_g F(g,A)$ has infinite codimension, which we then show implies $\widetilde{\mathcal S}_{m,k}$ has infinite codimension as well.

For a fixed $u \in E_\lambda(g)$, $L \circ \Delta '_{[\cdot]} u: \Gamma(S^2(T^{\ast}M)) \rightarrow C^{\infty}(M)$ is a pseudodifferential operator of order at most $-1$. This follows from the fact that $h \mapsto \Delta'_h u$ is a differential operator of order $1$, and $L$ is a pseudodifferential operator of order $-2$, as is easily seen from the functional calculus of pseudodifferential operators (see, e.g, \cite[Chapter XII, Theorem 1.3]{Taylor81}). In a slight abuse of notation, let $L \in \mathcal D'(M \times M)$ also denote the Schwartz kernel of the operator $L$. It satisfies the identity $\Delta_x L(x,y) = \lambda L(x,y) - \delta(x,y) + P(x,y)$, where $\delta(x,y)$ and $P(x,y)$ are the Schwartz kernels of the identity and the spectral projection onto $E_\lambda$, respectively.

We now compute the Schwartz kernel of $L \circ \Delta '_{[\cdot]} u$ for a fixed function $u\in C^\infty(M)$. This kernel is a distributional section of the bundle $\pi_2^\ast(\mathcal S^2(T^\ast M)) \rightarrow M \times M$, i.e., the pull-back of the bundle of symmetric $(0,2)$-tensors on $M$ along the projection to the second factor of $M\times M$. Its computation is most easily expressed in local coordinates, which suffices since we will only be interested in its behavior near the diagonal. In the following, we assume $h$ to be compactly supported in a coordinate patch.
\begin{align*}
    &L \circ \Delta '_{h} u = \int_M L(x,y) \left( \tfrac{1}{2} g^{ij}\partial_{y^i}\left(g_{k\ell} h^{k\ell}\right) \partial_{y^j} u - |g|^{-\frac12} \partial_{y^k} \left( |g|^{\frac12} h^{k\ell} \partial_{y^\ell} u \right)\right)|g|^{\frac12} dy \\
    &= \int_M \left[ \partial_{y^k} L(x,y) \partial_{y^\ell} u - \tfrac12 \big(L(x,y) \Delta u + g(\nabla_y L(x,y),\nabla u)\big) g_{k\ell} \right]h^{k\ell} |g|^{\frac12} dy
\end{align*}

With this expression in hand, the Schwartz kernel of $D_g F(g,A)$ is readily computed as the distributional section $K$ of $\pi_2^\ast(\mathcal S^2(T^\ast M)) \rightarrow M \times M$ given by
\begin{align}
\label{eq:schwartzkernel}
\begin{split}
    K(x,y)_{k\ell} = \ & A^{\alpha\beta} \left( \partial_{y^k} L(x,y) \partial_{y^\ell} u_\alpha(y) + \partial_{y^\ell} L(x,y) \partial_{y^k} u_\alpha(y) \right) u_\beta(x) \\- \ & A^{\alpha\beta} \big(\,\lambda L(x,y) u_\alpha(y)u_\beta(x) + g(\nabla_y L(x,y),\nabla u_\alpha(y))u_\beta(x) \,\big) g_{k\ell} 
\end{split}
\end{align}
Smoothness of $K(x,y)$ would imply $\rho(x,y) :=A^{\alpha\beta} L(x,y) u_\alpha(y)u_\beta(x)$ was smooth as well. If $\dim M = 2$ this follows directly from taking traces (in $y$):
    \begin{align*}
        -g^{k\ell} K(x,y)_{k\ell} = n \lambda \, A^{\alpha\beta} L(x,y) u_\alpha(y)u_\beta(x).
    \end{align*}
In dimension $3$ and above, we first apply the Laplacian in $y$ to $\rho(x,y)$, which yields 
    \begin{align*}
        \Delta_y \rho(x,y) = A^{\alpha\beta}\big[ \ &2\lambda L(x,y) u_\alpha(y)u_\beta(x) + 2g(\nabla_y L(x,y),\nabla u_\alpha(y))u_\beta(x) \\
        &- \delta(x,y) u_\alpha(y)u_\beta(x) + P(x,y)u_\alpha(y)u_\beta(x) \ \big].
    \end{align*}
    The third term is zero because $A^{\alpha\beta}u_\alpha(y)u_\beta(x)$ vanishes at the diagonal. Taking the trace of $K$, we obtain
    \begin{align*}
        g^{k\ell} K(x,y)_{k\ell} = (2-n) g(\nabla_y L(x,y),\nabla u_\alpha(y))u_\beta(x) - n \lambda \rho(x,y).
    \end{align*}
    Inserting this into the expression above shows that
    \begin{align*}
        \left( \tfrac{4\lambda}{n-2} + \Delta_y \right) \rho(x,y) = A^{\alpha\beta}P(x,y)u_\alpha(y)u_\beta(x) - \tfrac{2}{n-2} g^{k\ell} K(x,y)_{k\ell}.
    \end{align*}
    Elliptic regularity implies $\rho(x,y)$ is smooth in $y$ for fixed $x$. In fact, $\rho \in C^\infty(M\times M)$, since $\tfrac{4\lambda}{n-2} + \Delta_y$ is invertible on $C^\infty(M)$, and $C^\infty(M \times M) = C^\infty(M,C^\infty(M))$.

    For fixed $x \in M$, the function $A^{\alpha\beta} u_\alpha(y)u_\beta(x)$ is an eigenfunction of $\Delta_g$. Because it does not vanish identically on $M\times M$, there exists $x \in M$ where $A^{\alpha\beta} u_\alpha(y)u_\beta(x)$ is nonzero. Since $M$ is connected, this means $A^{\alpha\beta} u_\alpha(y)u_\beta(x)$ does not vanish to infinite order at $x=y$. If $\rho(x,y)$ was smooth, $L(x,y) = \rho(x,y)(A^{\alpha\beta} u_\alpha(y)u_\beta(x))^{-1}$ is an element of $\mathcal Q^\infty(M)$, which would contradict the assumption $g \in \mathcal S_{m,k}(\lambda)$. Thus, $\rho(x,y)$ is not smooth, which shows $K(x,y)$ is non-smooth as well. We conclude that $D_g F(g,A)$ has infinite rank.

    Consider the restricted map $F_1: \mathcal U \times \mathcal A \to C^\infty(M)$ defined in \Cref{lemma:nontransverse defining function}(3). Up to sign, there exists a unique $A \in \mathcal A$ such that $F_1(g, A) = 0$. Since $D_g F_1(g,A)$ has infinite rank, for any $N \in \mathbb N$ we may choose a linear map $P_N: C^\infty(M) \to \mathbb R^N$ such that $P_N \circ D_g F_1(g_0,A_0)$ is onto, and $\ker P_N \cap \mathrm{Im} \, D_A F_1(g,A) = \{0\}$. By the implicit function theorem (as in Appendix A, \Cref{proposition: appendix: submersion}), the set $(P_N \circ F)^{-1}(\{0\})$ is locally a Fr\'{e}chet submanifold of $\mathcal U \times \mathcal A$, which has codimension $N$. Its image under the projection $\pi: \mathcal U \times \mathcal A \to \mathcal U$ is still a manifold if its tangent space intersects $\ker D\pi$ trivially. This is true by construction, since $B \in \ker D\pi \cap \ker P_N \circ D_A F_1(g,A)$ would solve $F_1(g,B) = 0$ and be orthogonal to $A$, which was excluded by restriction to $\mathcal A$. Thus, after possibly shrinking $\mathcal U$, $\widetilde{\mathcal S}_{m,k}$ is contained in a Fr\'{e}chet submanifold of codimension $N - \frac{m(m+1)}{2} + 1$. Since $N$ was arbitrary, this concludes the proof in the non-conformal setting.

    The proof in the conformal case is almost identical, because only conformal perturbations were used to show $D_g F(g,A)$ has infinite rank. The key point of the preceding paragraphs was that $K(x,y)_{k\ell}$ is not smooth (unless $g \in \mathcal S^\infty(\lambda)$) because $g^{k\ell} K(x,y)_{k\ell}$ is not a smooth function, and the latter is precisely the Schwartz kernel of $D_g F(g,A)$ when restricted to conformal multiples of $g$.
\end{proof}

Next, we show that $\mathcal S_\infty = \bigcup_{\lambda \in (-\infty,0)} \mathcal S_\infty(\lambda)$ has infinite codimension. In fact, if $n=2$ or $n\geq 3$ is odd, this set is empty. If $n\geq 4$ is even, it may well be empty, too, but our methods only allow us to show it is very small.

\begin{lemma}
    \label{lemma: smooth kernel}
    Let $L(x,y)$ denote the Schwartz kernel of the pseudodifferential operator $L$ defined by $L(\lambda - \Delta_g) = (\lambda - \Delta_g)L = \mathbb I - P_\lambda$. Near the diagonal, it takes the form $L(x,y) = d(x,y)^{2-n} L_1(x,y) + \mathcal P(\lambda, g) \log(d(x,y)) + L_2(x,y)$, where 
    \begin{enumerate}[label=(\roman*)]
        \item $L_1(x,y)$ is smooth and nonvanishing at the diagonal,
        \item $\mathcal P(\lambda, g)$ is a universal polynomial of $\lambda$ and a finite jet of $g$ at $x$, which does not vanish identically on any conformal class, and
        \item $L_2(x,y)$ is bounded.
    \end{enumerate}
    In particular, $\mathcal S_\infty = \varnothing$ if $n=2$ or $n\geq 3$ is odd. If $n\geq 4$ is even, $g \in \mathcal S_\infty$ means $\mathcal P(\lambda, g) \equiv 0$ for some $\lambda \in \sigma(\Delta_g)$. The set of $g$ for which this occurs has, at least, infinite \tcodim, even when fixing the conformal class.
\end{lemma}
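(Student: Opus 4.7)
The plan is (a) to derive the asymptotic expansion from the Hadamard parametrix for $\lambda - \Delta_g$, (b) to verify the non-vanishing of $\mathcal{P}$ by an explicit flat-space computation, and (c) to deduce the codimension claim from the polynomial dependence of $\mathcal{P}$ on a finite jet of $g$.

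For (a), I would construct the Hadamard parametrix in normal coordinates at $x$: a formal series $E(x,y) = r^{2-n}\sum_{k \geq 0} U_k(x,y) r^{2k}$ (with $r = d(x,y)$) whose coefficients $U_k$ satisfy radial transport ODEs obtained by expanding $(\lambda - \Delta_g) E$ in powers of $r$. The classical Hadamard recursion yields $U_0(x,x)$ a universal nonzero constant and each $U_k(x,y)$ a universal polynomial in a finite jet of $g$ and in $\lambda$. Borel summation of the formal series, together with the uniqueness of $L$ modulo smoothing operators, identifies the singular part of $L$ with that of the parametrix, giving an $L_2$ which is in fact smooth. In odd dimensions and in $n=2$ the recursion closes for all $k$ with no logarithm appearing, whence $\mathcal{P} \equiv 0$; in even $n \geq 4$ the recursion fails at index $k = \tfrac{n}{2} - 1$ and can only be continued by adding a summand $V(x,y) \log r$. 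This produces the expansion as stated with $\mathcal{P}(\lambda, g)(x) = c_n V(x,x)$ for a universal $c_n \neq 0$, and $V(x,x)$ a universal polynomial in $\lambda$ and a finite jet of $g$ at $x$, establishing (i)--(iii) modulo the non-vanishing assertion.

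For (b), I would first verify non-vanishing on the flat model $(\mathbb R^n, g_{\mathrm{eucl}})$, where $L$ is, up to localization, the Macdonald Green's function $(2\pi)^{-n/2}(\mu/r)^{(n-2)/2} K_{(n-2)/2}(\mu r)$ of $-\Delta + \mu^2$ with $\mu^2 = -\lambda$; for even $n \geq 4$, the Laurent expansion of $K_{(n-2)/2}$ at the origin carries a $\log r$ coefficient that is an explicit nonzero polynomial in $\mu^2$. Since $\mathcal{P}(\lambda, g)(x)$ depends only on a finite jet of $g$ at $x$, and every conformal class contains a representative whose metric osculates the Euclidean one to arbitrarily high order at a chosen point, this model calculation transfers to arbitrary conformal classes. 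The main potential obstacle here is controlling this conformal-transfer step rigorously: I would handle it via a direct Yamabe-type transformation law for the Hadamard coefficients, or alternatively by a perturbative argument showing that the Euclidean leading term of $\mathcal{P}$ survives small conformal deformations.

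For (c), given $N \in \mathbb{N}$ and a smooth map $\Phi: X \to \mathcal{G}(M)$ (resp.\ $\mathcal{G}(M, g_0)$) from an $N$-dimensional manifold $X$, I must show $\Phi^{-1}(\mathcal{S}_\infty)$ is nowhere dense. For each compact interval $I \subset (-\infty, 0)$, Weyl's law locally bounds the number of eigenvalues $\lambda_j(s)$ of $\Delta_{\Phi(s)}$ in $I$, and these depend continuously on $s$. The condition $\Phi(s) \in \mathcal{S}_\infty$ restricted to $I$ becomes: for some $j$, the function $\mathcal{P}(\lambda_j(s), \Phi(s))$ vanishes identically on $M$. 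At each fixed $x_0 \in M$ this is a polynomial equation in a finite jet of $\Phi(s)$ at $x_0$; by (b) the polynomial is nonzero, so a generic compactly supported perturbation of $\Phi$ near $x_0$ makes $\mathcal{P}(\lambda_j(s), \Phi(s))(x_0) \neq 0$. Invoking the cofinite transversality theory of Appendix~A and taking a countable intersection over a dense sequence of points $x_0$, intervals $I_k$ exhausting $(-\infty, 0)$, and eigenvalue indices $j$ produces the claimed infinite codimension statement.
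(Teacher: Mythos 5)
Your Hadamard-parametrix route to the expansion in (a), and the Macdonald/Bessel computation of the flat-space logarithm coefficient in (b), are a legitimate alternative to the paper's argument, which instead integrates the heat propagator $e^{t(\Delta-\lambda)}$ and reads off the singular terms from the heat coefficients $a_j(p,p)$; your worry about the conformal-transfer step is resolved exactly as you suggest, by osculating the Euclidean metric inside the conformal class (the paper uses the rescaled factors $\varphi_\varepsilon$). Two things are nonetheless wrong or missing. First, your bookkeeping in $n=2$ is incorrect: the Hadamard recursion obstructs at $k=\tfrac{n-2}{2}=0$, so in dimension two the \emph{leading} singularity of $L$ is already logarithmic (consistent with the $\tfrac{1}{2\pi}\log r$ fundamental solution), and it is precisely this never-vanishing log coefficient that yields $\mathcal S_\infty=\varnothing$ for $n=2$. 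With your claim ``no logarithm, $\mathcal P\equiv 0$'' the power part $r^{2-n}=r^{0}$ is harmless and nothing would prevent $L\in\mathcal Q^\infty$, so the $n=2$ clause of the lemma is lost. Second, you never derive the ``in particular'' clauses from the expansion: one still has to argue that a $d(x,y)^{2-n}$ singularity with $n$ odd, or a $\log d(x,y)$ term with nonvanishing coefficient, is incompatible with $L$ being a quotient of smooth functions with denominator of finite vanishing order (the paper does this by restricting $\varphi/\psi$ to a curve, where it must look like $t^k h(t)$ with $h(0)\neq 0$).

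The more serious gap is step (c). Avoiding the single scalar condition $\mathcal P(\lambda_j(s),\Phi(s))(x_0)=0$ is \emph{not} generic for families of positive dimension: this condition cuts out a set of codimension (at most) one, so an $N$-parameter family with $N\geq 1$ will generically meet it, and intersecting over a dense sequence of points $x_0$ cannot repair an avoidance statement that is already false for each fixed $x_0$. What must be used is that $g\in\mathcal S_\infty$ forces $\mathcal P(\lambda,g)$ to vanish at \emph{every} point of $M$: given $N$, choose $N$ distinct points and show that $g\mapsto\big(\mathcal P(\lambda(g),g)(x_1),\dots,\mathcal P(\lambda(g),g)(x_N)\big)$ has surjective differential, because jets of the metric (even conformal factors) at distinct points can be perturbed independently and $\mathcal P$ is a nonzero polynomial in the jet; this places $\mathcal S_\infty$ locally inside a cofinite submanifold of codimension $N$, and only then does \Cref{lemma: appendix: local transversality} give \tcodim\ $N$ for every $N$. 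The paper states this step tersely (``purely local quantity, generically nonzero''), but that phrase stands in for the multi-point independence argument, which your dense-sequence intersection does not supply.
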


\begin{proof}
Denote by $P_{[2\lambda,0]}$ the spectral projection onto all eigenvalues contained in the interval $[2\lambda,0] \subseteq (-\infty,0]$. Consider the operator $R_\lambda = (\lambda - \Delta)^{-1}(\mathbb I - P_{[2\lambda,0]})$, which differs from the operator $L$ we are interested in by a smoothing operator. We will obtain the asymptotics of its Schwartz kernel by considering the heat propagator $e^{t \Delta} (\mathbb I - P_{[2\lambda,0]})$. This operator has very good decay properties in the limit $t \to \infty$: $\lVert e^{t \Delta} (\mathbb I - P_{[2\lambda,0]}) \rVert \lesssim e^{-2t(|\lambda|-\varepsilon)}$ in all reasonable norms, including $L^\infty$-bounds on the kernel. One then easily obtains the operator identity
\begin{align*}
    R_\lambda = \int_0^\infty e^{t (\Delta - \lambda)} (\mathbb I - P_{[2\lambda,0]}) dt,
\end{align*}
as the integrand is bounded as $t\rightarrow 0$ and decays exponentially as $t \rightarrow \infty$.

For $T>0$, the operator $\int_T^\infty e^{t (\Delta - \lambda)} (\mathbb I - P_{[2\lambda,0]}) dt$ is a smoothing operator, as its kernel is smooth.
The operator $\int_0^T e^{t (\Delta - \lambda)} P_{[2\lambda,0]} dt$ is likewise a smoothing operator, as it maps into $E_{[2\lambda,0]}$, which is a finite dimensional space of smooth functions.
Thus, up to a smoothing operator, 
\begin{align*}
    R_\lambda \sim \int_0^T e^{t (\Delta - \lambda)} dt = \int_0^T \sum_{k=0}^\infty \frac{|\lambda|^k}{k!} t^k e^{t\Delta} dt.
\end{align*}
The heat kernel has an asymptotic expression of the form
\begin{align*}
    k(t,p,q) = (4\pi t)^{-\frac n2}\exp\left(-\frac{d(p,q)^2}{4t}\right) \sum_{j=0}^\infty a_j(p,q) t^j,
\end{align*}
where $a_j(p,q) \in C^\infty(M \times M)$, and furthermore $a_j(p,p)$ is a polynomial in finitely many derivatives of the metric $g$. One obtains this e.g., from specializing Theorem 7.15 in \cite[pg. 101]{Roe1988} to the operator $d + d^\ast$ on $\Lambda( T^\ast M )$.
We collect powers of $t$ in the kernel of $e^{t (\Delta - \lambda)}$:
\begin{align*}
    \sum_{k=0}^\infty \frac{|\lambda|^k}{k!} t^k k(t,p,q) &= (4\pi t)^{-\frac n2}\exp\left(-\frac{d(p,q)^2}{4t}\right) \sum_{k=0}^\infty \sum_{j=0}^\infty \frac{|\lambda|^k}{k!} a_j(p,q) t^{k+j} \\
    &= (4\pi t)^{-\frac n2}\exp\left(-\frac{d(p,q)^2}{4t}\right) \sum_{k=0}^\infty \left( \sum_{j=0}^k \frac{|\lambda|^j}{j!} a_{k-j}(p,q) \right) t^k
\end{align*}
For the kernel $R(p,q)$ of $ R_\lambda$, one gets the following asymptotic expression by plugging in the heat kernel asymptotics into $\int_0^T e^{t (\Delta - \lambda)} dt$, and performing the change of variables $s = \frac{d(p,q)^2}{4t}$.
\begin{align*}
    R(p,q) \sim \sum_{k=0}^\infty 4^{-k-1}\pi^{-\frac n2} \left( \sum_{j=0}^k \frac{|\lambda|^j}{j!} a_{k-j}(p,q) \right) d(p,q)^{-n+2+2k} \int_{\frac{d(p,q)^2}{4T}}^{\infty} s^{\frac n2 - k - 2} e^{-s} ds
\end{align*}
Since $\int_{\frac{d(p,q)^2}{4T}}^{\infty} s^{\frac n2 - k - 2} e^{-s} ds \sim \Gamma(\frac n2 - k - 1)$ up to order $n - 2k - 2$, the singularity of $R(p,q)$ near the diagonal is given by 
\begin{align*}
    R(p,q) \sim & \sum_{k=0}^{\lfloor \frac{n-1}{2} \rfloor} 4^{-k-1}\pi^{-\frac n2}\Gamma(\frac n2 - k - 1) \left( \sum_{j=0}^k \frac{|\lambda|^j}{j!} a_{k-j}(p,q) \right) d(p,q)^{-n+2+2k} \\
    & + \frac{1+(-1)^n}{2^{n+3}\pi^{\frac n2}} \left( \sum_{j=0}^{n/2} \frac{|\lambda|^j}{j!} a_{k-j}(p,q) \right) \log(d(p,q)),
\end{align*}
with the logarithmic term present if $n$ is even. The power singularities are thus of the form $d(p,q)^{2-n}L_1(p,q)$ for some function $L_1 \in C^\infty(M\times M)$. The logarithmic term vanishes only if $\mathcal P(\lambda, g)(p) := \sum_{j=0}^{n/2} \frac{|\lambda|^j}{j!} a_{k-j}(p,p)$ vanishes identically.

We now show that, unless $\mathcal P(\lambda,g) \equiv 0$, the kernel $L$ cannot be a quotient of smooth functions. Recall the ring $\mathcal Q^\infty(M)$ of quotients of smooth functions whose denominator does not vanish to infinite order. We wish to show there exists $x \in M$ such that $L(x,y) \not \in \mathcal Q^\infty(M)$. Consider an arbitrary fraction $\frac \varphi \psi \in Q^\infty(M)$. Along a curve $\gamma:(-\varepsilon,\varepsilon) \to M$ where $\psi \circ \gamma$ does not vanish to infinite order, $\left( \frac \varphi\psi \circ \gamma \right) (t) = t^k h(t)$ for some $k \in \mathbb Z$ and $h \in C^\infty(-\varepsilon,\varepsilon)$ with $h(0) \neq 0$.

Thus, if $n = 2$ or $n \geq 3$ is odd, its leading order singularity already precludes $L(x,y)$ from lying in $\mathcal Q^\infty(M)$. If $n \geq 4$ is even, $d(x,y)^{2-n} L_1(x,y) \in \mathcal Q^\infty(M)$, and hence $L(x,y) \in \mathcal Q^\infty(M)$ if and only if $\mathcal P(\lambda, g) \log(d(x,y)) + L_2(x,y) \in Q^\infty(M)$. Thus, $L(x,y) \in \mathcal Q^\infty(M)$ implies that $\mathcal P(\lambda, g)$ vanishes identically on $M$. Since $\mathcal P(\lambda, g)$ is a purely local quantity, which is generically nonzero, the set of metrics where $\mathcal P(\lambda, g)$ vanishes identically has infinite transversality codimension, and thus also $\mathcal S^\infty$.

The only caveat is that $\mathcal P(\lambda, g)$ might vanish \emph{as a polynomial}, which we can exclude by exhibiting a single metric where $\mathcal P(\lambda, g) \neq 0$. The euclidean metric already suffices: The fundamental solution to Helmholtz' equation in Euclidean space, $(1 + \Delta_{\mathrm{Euc}}) u = \delta_0$, is given by $u(x) = c_n |x|^{1-\frac{n}{2}} Y_{\frac{n}{2}-1}(|x|)$, where $Y_{\frac{n}{2}-1}$ is a Bessel function of the second kind and $c_n$ a nonzero dimensional constant. If $n$ is even, the full asymptotic expansion of $Y_{m}(z)$ for small arguments given in \cite[Eq. 9.1.11]{AbramowitzStegun} implies
\begin{align*}
    Y_m(z) = -\frac{\left( \frac{1}{2}z \right)^{-m} }{ \pi} \sum_{k=0}^{m-1} \frac{(m-k-1)!}{k!}(\tfrac14 z^2)^k + \frac2\pi \log(\tfrac 12 z) J_m(z) + \mathcal O(1),
\end{align*}
where $J_m$ is a Bessel function of the first kind. Since $J_{\frac n2-1}$ vanishes to order $\frac n2-1$, and $u(x) = c_n |x|^{1-\frac{n}{2}} Y_{\frac{n}{2}-1}(|x|)$, we conclude that $\mathcal P(\lambda, g_{\mathrm{Euc}})$ is a nonzero constant.

For the last claim, fix a conformal class $\mathcal G(M,g_0)$. We must show that $\mathcal P(\lambda, g)$ does not vanish identically on $\mathcal G(M,g_0)$. Take an arbitrary point $p \in M$ and consider $g_\varepsilon = \varphi_\varepsilon g_0$, where $\varphi_\varepsilon \equiv \varepsilon^{-2}$ on $\mathbb B_{\varepsilon}^{g_0}(p)$ and $\varphi_\varepsilon \equiv 1$ outside $\mathbb B_{2\varepsilon}^{g_0}(p)$. In normal coordinates at $p$, $g_\varepsilon$ and all its derivatives converge to the Euclidean metric as $\varepsilon$ approaches $0$. Since $\mathcal P(\lambda, g_{\mathrm{Euc}}) \neq 0$, there must be some $\varepsilon$ such that $\mathcal P(\lambda, g_{\varepsilon})$ does not vanish. Hence, $\mathcal P(\lambda, g)$ does not vanish identically on any conformal class.
\end{proof}

The full non-crossing rule (\Cref{theorem:nonCrossing}) follows by ``stitching together'' the local codimension bounds from \Cref{prop:generalizednontransverseMetric} and \Cref{lemma: smooth kernel} (which, along the way, yields \Cref{theorem:nontransverse}) with the local structure theorem near nondegerate eigenvalues, \Cref{theorem:transverse structure theorem}.

\begin{proof}[Proof of \Cref{theorem:nontransverse} and \Cref{theorem:nonCrossing}]
    By \Cref{lemma: smooth kernel}, the exceptional set $\mathcal S_\infty$ has infinite \tcodim. It remains to show the same for $\mathcal S_{m,k} = \bigcup_{\lambda \in (-\infty,0)} S_{m,k}(\lambda)$.
    
    Fix $N \in \mathbb N$. We now show that $\mathcal S_{m,k}$  has \tcodim \ $N$, and thus, as $N$ was arbitrary, of infinite \tcodim. Let us order $(m,k)$ lexicographically, i.e., $(m',k') \succeq (m,k)$ if $m' > m$ or $m'=m$, $k' \geq k$.
    For $R \in \mathbb N$, let $\overline{\mathcal S}_{m,k;R}$ denote the set of metrics $g$ such that $g \in \mathcal S_{m,k}(\lambda)$ for some $\lambda \in [-R,0)$, but there exist no $(m',k') \succeq (m,k)$ such that $g \in \mathcal S_{m',k'}(\lambda')$ for some $\lambda' \in [-R,0)$. Since both $m$ and $k$ are dimensions of the null space of some linear system varying continuously with $g$, and hence upper semicontinuous, this definition achieves the following: For any $g \in \overline{\mathcal S}_{m,k;R}$, there exists an open neighborhood $\mathcal U_0$ of $g$ with $\mathcal U_0 \cap \overline{\mathcal S}_{m',k';R} = \varnothing$ for any $(m',k') \succeq (m,k)$.
    
    Consider $g \in \overline{\mathcal S}_{m,k;R}$ and $\mathcal U_0$ as above. Let $\lambda_1,\ldots,\lambda_\ell \in [-R,0)$ denote all eigenvalues for which $g \in \mathcal S_{m,k}(\lambda)$. \Cref{prop:generalizednontransverseMetric} yields neighborhoods $\mathcal U_j$ of $g$, $j = 1,\ldots,\ell$, and corresponding submanifolds $\widetilde{\mathcal S}_j \subseteq \mathcal U_j$ of codimension $N$ such that $\left(\bigcap_{j=0}^k \mathcal U_j\right) \cap \overline{\mathcal S}_{m,k;R} \subseteq \bigcup_{j=1}^k \widetilde{\mathcal S}_j$. Thus, by \Cref{lemma: appendix: local transversality}, $\overline{\mathcal S}_{m,k;R}$ has \tcodim \ $N$. The set $\mathcal S_{m,k}$ is contained in a countable union of such sets:
    \begin{align*}
        \mathcal S_{m,k} \subseteq \hspace{-5mm} \bigcup\limits_{\substack{(m',k') \succeq (m,k), \\ R \in \mathbb N}}  \hspace{-4mm}  \overline{\mathcal S}_{m',k';R}.
    \end{align*}
    As $N$ was arbitrary, this shows $\mathcal S_{m,k}$ has infinite \tcodim, and so is the set $\mathcal S = \mathcal S_\infty \cup \left( \bigcup_{m,k} \mathcal S_{m,k} \right)$ of all metrics admitting an unstable eigenvalue. Thus, \Cref{theorem:nontransverse} is proven.

    Stable eigenvalues are dealt with in much the same way. Introduce $\overline{\mathcal T}_{m;R}$, the set of metrics admitting a stable eigenvalue $\lambda \in [-R,0)$ of multiplicity exactly $m$, but no $\lambda \in [-R,0) \cap \sigma(\Delta_g)$ which is unstable, or stable but of higher multiplicity than $m$. Analogous to before, there exist a neighborhood $\mathcal U$ around any $g \in \overline{\mathcal T}_{m;R}$, and submanifolds $\mathcal T_1, \ldots, \mathcal T_\ell \subseteq \mathcal U$ of codimension $\frac{m(m+1)}{2}-1$, such that $\mathcal U \cap \overline{\mathcal T}_{m;R} = \bigcup_{j=1}^\ell \mathcal T_j$. \Cref{lemma: appendix: local transversality} implies $\overline{\mathcal T}_{m;R}$ has \tcodim \ $\frac{m(m+1)}{2}-1$. The set of metrics admitting an eigenvalue of multiplicity at least $m$ is covered by the union of $\mathcal S$ and the sets $\overline{\mathcal T}_{m';R}$, $m' \geq m, R \in \mathbb N$. It therefore has \tcodim \ $\frac{m(m+1)}{2}-1$.
\end{proof}

The maximal non-crossing rule in $C^\infty$ regularity implies a maximal non-crossing rule in finite regularity. Recall Courant's min-max characterization of the $k^{th}$ eigenvalue \cite[Theorem 4.5.12]{LableeSpectralTheory}:

\begin{align}
    - \lambda_k = \min_{\substack{V \subseteq H^1(M) \\ \dim V = k}} \, \max_{u \in V \setminus\{0\} }\frac{\int_M g(du,du) d\mu_g}{\int_M u^2 d\mu_g}
\end{align}

This definition is meaningful for continuous metrics, and the eigenvalues thus obtained agree with those of the operator $\Delta_g: H^2(M) \to L^2(M)$ if $g$ is continuously differentiable. For $\ell \geq 0$, let $\mathcal{G}^\ell (M)$ denote the Banach manifold of $C^\ell$ metrics on $M$. The eigenvalue functional $\lambda_k$ is continuous on $\mathcal{G}^\ell (M)$ for all $\ell \geq 0$. This follows from Lemma 4.5.13 in \cite{LableeSpectralTheory}, which is stated in smooth regularity, but works just as well in $\mathcal G^0(M)$.

\begin{corollary}
\label{corollary: finite regularity}
    Let $\ell \geq 0$, and consider the Banach manifold $\mathcal{G}^\ell (M)$  of $C^\ell$ metrics on $M$. The set of metrics which have at least one eigenvalue of multiplicity at least $m$ has \tcodim \ $\frac{m(m+1)}{2}-1$.
\end{corollary}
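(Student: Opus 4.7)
The plan is to bootstrap Theorem \ref{theorem:nonCrossing} from $C^\infty$ regularity to $C^\ell$ regularity using the density of smooth metrics in $\mathcal G^\ell(M)$ together with continuity of the eigenvalue functionals.

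Let $N$ be a smooth manifold with $\dim N < \frac{m(m+1)}{2}-1$, and let $\mathcal D^\ell \subseteq \mathcal G^\ell(M)$ denote the set of $C^\ell$ metrics admitting an eigenvalue of multiplicity at least $m$. First, I would observe that continuity of the eigenvalue maps $\lambda_k: \mathcal G^\ell(M) \to \mathbb R$ (which follows from Courant's min-max characterization together with Lemma~4.5.13 in \cite{LableeSpectralTheory}) implies that $\mathcal D^\ell$ is closed in $\mathcal G^\ell(M)$: indeed, $g \in \mathcal D^\ell$ iff $\lambda_k(g) = \lambda_{k+m-1}(g)$ for some $k$. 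Consequently, for each compact $K \subseteq N$ the set
\begin{equation*}
    \mathcal O_K := \left\{ F \in C^\infty(N, \mathcal G^\ell(M)) \ : \ F(K) \cap \mathcal D^\ell = \varnothing \right\}
\end{equation*}
is open in the $C^\infty$ topology (since $F \mapsto F|_K$ is continuous to $C^0(K, \mathcal G^\ell(M))$, and avoidance of the closed set $\mathcal D^\ell$ on a compact set is $C^0$-open).

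Next I would establish density of each $\mathcal O_K$. Fix $F_0 \in C^\infty(N, \mathcal G^\ell(M))$ and a $C^\infty$-open neighborhood $\mathcal V$ of $F_0$. I would produce an approximation $F_1 \in \mathcal V$ with $F_1(N) \subseteq \mathcal G(M)$ by mollifying in the $M$-variable: choosing a smooth embedding of $M$ into Euclidean space and convolving $F_0(x)$ with a smooth approximate identity at scale $\varepsilon$ yields a family depending smoothly on $x$ and smoothly on $p \in M$. For $\varepsilon$ sufficiently small, this mollification is $C^\ell$-close to $F_0(x)$ uniformly in $x$ (on any compact subset of $N$) and remains positive-definite, hence lies in $\mathcal V$. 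Then I would invoke Theorem \ref{theorem:nonCrossing} on the smooth family $F_1: N \to \mathcal G(M)$: since $\dim N < \frac{m(m+1)}{2}-1$, there exists $F_2 \in C^\infty(N, \mathcal G(M))$ arbitrarily close to $F_1$ in the $C^\infty$ topology on $\mathcal G(M)$ with $F_2(N) \cap \mathcal D = \varnothing$, where $\mathcal D = \mathcal D^\ell \cap \mathcal G(M)$. Since the inclusion $\mathcal G(M) \hookrightarrow \mathcal G^\ell(M)$ is continuous and maps smooth metrics with distinct eigenvalue multiplicities into $\mathcal G^\ell(M) \setminus \mathcal D^\ell$ (by the continuity of the spectrum in $C^\ell$), we conclude $F_2 \in \mathcal V \cap \mathcal O_K$.

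Finally, choosing a compact exhaustion $K_1 \subseteq K_2 \subseteq \cdots$ of $N$, the set of $F \in C^\infty(N, \mathcal G^\ell(M))$ avoiding $\mathcal D^\ell$ equals $\bigcap_{j=1}^\infty \mathcal O_{K_j}$, a countable intersection of open dense sets, hence residual. This gives \tcodim\ $\frac{m(m+1)}{2}-1$ for $\mathcal D^\ell$.

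The only subtle step is the mollification in step two: one must ensure that the approximating family is itself smooth as a map $N \to \mathcal G(M)$ (with $\mathcal G(M)$ carrying its Fréchet topology) and lies $C^\ell$-close to $F_0$ in $\mathcal G^\ell(M)$. This is routine but requires choosing the mollification parameter and the finitely many seminorms entering the definition of $\mathcal V$ consistently, and is most easily carried out chart by chart using a fixed partition of unity on $M$. Once this approximation is in hand, the rest of the argument is a mechanical application of Theorem~\ref{theorem:nonCrossing} and elementary Baire-category reasoning.
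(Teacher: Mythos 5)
Your overall strategy (mollify the family to land in $\mathcal G(M)$, perturb it using \Cref{theorem:nonCrossing}, then run a Baire-category argument over a compact exhaustion) is the same as the paper's. But there is one genuine gap: the claim that $\mathcal D^\ell$, the set of \emph{all} $C^\ell$ metrics possessing an eigenvalue of multiplicity at least $m$ somewhere in the spectrum, is closed. Your justification — ``$g \in \mathcal D^\ell$ iff $\lambda_k(g) = \lambda_{k+m-1}(g)$ for some $k$'' — exhibits $\mathcal D^\ell$ as a \emph{countable union} of closed sets, which is only $F_\sigma$, not closed. Continuity of each individual $\lambda_k$ does not rule out a sequence $g_j \to g$ in which the multiple eigenvalue occurs at indices $k_j \to \infty$ (the degeneracy ``escapes to infinity'' in the spectrum); since eigenvalue gaps of the limit metric need not be uniformly bounded below, there is no reason the limit $g$ retains a multiple eigenvalue. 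Without closedness of $\mathcal D^\ell$, your sets $\mathcal O_K$ are not known to be open, and the Baire argument as you have structured it collapses.

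The fix is exactly the truncation the paper uses: for $R \in \mathbb N$, let $\mathcal D_{m;R}$ be the set of metrics with a multiplicity-$\geq m$ eigenvalue in $[-R,0)$. Only finitely many eigenvalue branches are relevant in such a window near any given metric, so continuity of the spectrum does give closedness of $\mathcal D_{m;R}$, hence openness of the avoidance sets $\mathcal O_{K,R}$ on compact $K$. Your density argument (smoothing plus \Cref{theorem:nonCrossing}) applies verbatim to each $\mathcal O_{K,R}$, and the set of maps avoiding $\mathcal D^\ell = \bigcup_R \mathcal D_{m;R}$ is then the countable intersection $\bigcap_{j,R} \mathcal O_{K_j,R}$, hence residual. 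With this modification your proof coincides with the paper's; the mollification-by-embedding you describe is just a concrete instance of the smoothing operators $S_\varepsilon$ used there.
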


\begin{proof}
    Denote the set of metrics admitting an eigenvalue $\lambda\in [-R,0)\cap \sigma(\Delta_g)$ of multiplicity at least $m$ by $\mathcal{D}_{m;R}\subset \mathcal{G}^\ell (M)$. This set is closed, since eigenvalues move continuously under deformations of the metric. Consequently, the set of smooth maps from a compact $k$-dimensional manifold $N$ to $\mathcal{G}^\ell (M)$ that avoids $\mathcal{D}_{m;R}$ is open.
    
    We now show it is dense if $k < \frac{m(m+1)}{2}-1$. For this, let $S_\varepsilon : \mathcal{G}^\ell (M)\to \mathcal{G}^\ell (M)$ be a family of smoothing operators converging to the identity in the strong operator topology. Since $N$ is compact, we can ensure that $\lVert f- S_\varepsilon f \rVert_{C^\ell}$ is arbitrarily small. Now, $S_\varepsilon f$ can be perturbed in $\mathcal{G}^\infty (M)$ to avoid $\mathcal{D}_{m;R}$ by \Cref{theorem:nonCrossing}.
    
    The set of smooth maps from $N$ to $\mathcal{G}^\ell (M)$ which avoid eigenvalues of multiplicity at least $m$ is thus a countable intersection of dense open sets, and hence Baire generic. For non-compact $N$, the result follows by a countable compact exhaustion.
\end{proof}

\begin{comment}
\begin{corollary}
\label{corollary: weak arnold hypothesis}
    The operator families $(\Delta_{g})_{g \in \mathcal G(M,g_0)}$ and $(\Delta_{g})_{g \in \mathcal G(M)}$ satisfy the weak Arnold hypothesis.
\end{corollary}
\end{comment} 

\section{Low multiplicities and examples}

This section is concerned with natural questions and examples associated to the phenomenon of (conformally) stable eigenvalues. The simplest examples are flat tori and the round sphere. Thus, already in 1988, Colin de Verdi\`ere classified the unstable and conformally unstable eigenvalues on the square torus and the two-dimensional round sphere while investigating the strong Arnold hypothesis \cite{CdV1988}. We discuss these examples, in two \emph{and} higher dimensions, in sections \ref{sec:torus} and \ref{sec:sphere}, respectively. Section \ref{section:low multiplicities and examples} is concerned with eigenvalues of low multiplicity, and Section \ref{sec:misc} lists further examples and comparisons.

\subsection{Nonexistence of unstable eigenvalues of low multiplicity}
\label{section:low multiplicities and examples}

Unstable eigenvalues must have fairly high multiplicity. This observation goes back at least to \cite[Theorem 4.3]{Micheletti1998}, which essentially says that there are no conformally unstable eigenvalues of multiplicity 2 in two dimensions. It is natural to ask for the lowest possible multiplicity of a (conformally) unstable eigenvalue. \Cref{theorem: nonexistence of low multiplicity nontransverse eigenvalues} gives an almost complete answer.

In Section \ref{sec:torus}, we will see examples of conformally unstable eigenvalues of multiplicity $4$ and unstable eigenvalues of multiplicity $8$. It is an interesting question whether unstable eigenvalues of multiplicity $7$ exist (see Question \ref{q:multiplicity7}).

\begin{proof}[Proof of \Cref{theorem: nonexistence of low multiplicity nontransverse eigenvalues}]
The first claim will follow from the fact that, on a connected Riemannian manifold, the product of two linearly independent real valued Laplace eigenfunctions cannot be nonnegative. To see this, let $u,v \in C^\infty(M)$ be two nonzero eigenfunctions such that $u(x)v(x) \geq 0$ for all $x \in M$. Near any $x \in u^{-1}(\{0\})$, $u$ must take both positive and negative values by the maximum principle, which implies that $v(x) = 0$ as well. Thus, $u$ and $v$ share the same zero set. Let $\Omega$ be a connected component of $u^{-1}(\mathbb R \setminus \{0\})$. The functions $u \rvert_\Omega$ and $v \rvert_\Omega$ are positive solutions to the Dirichlet eigenvalue problem on $\overline \Omega$. Any such solution corresponds to the lowest eigenvalue, which is always simple, so $u = t v$ for some $t > 0$. 

Consider a conformally unstable eigenvalue $\lambda \in \sigma(\Delta_g)$, and associated linearly independent eigenfunctions $u_1,\ldots,u_k \in E_\lambda$ such that $\sum_{j=1}^k \epsilon_j u_j^2 = 0$  for some $\epsilon_1,\ldots,\epsilon_k \in \{-1,1\}$. Not all $\epsilon_j$ can be $1$, so $k \geq 2$. If $k=2$, $u_1^2 = u_2^2$, which by unique continuation implies $u_1 = \pm u_2$, a contradiction. Suppose now that $k=3$. Without loss of generality, $u_1^2 = u_2^2 + u_3^2$. Then $(u_1-u_2)(u_1+u_2) = u_3^2 \geq 0$, which is also impossible, as shown in the first paragraph. A conformally unstable eigenvalue must therefore be of multiplicity at least $4$.

If $\lambda \in \sigma(\Delta_g)$ is unstable, there exist linearly independent eigenfunctions $u_1,\ldots,u_k$ such that $\sum_{j=1}^k \epsilon_j u_j^2 = 0$ and $\sum_{j=1}^k \epsilon_j \, du_j \! \otimes \! du_j = 0$. Without loss of generality, $\epsilon_1 = \ldots = \epsilon_\ell = -1$, and $\epsilon_{\ell+1} = \ldots = \epsilon_k = 1$. The same argument as before shows $\min(\ell,k-\ell) \geq 2$.  Consider the maps $\Phi:M \rightarrow \mathbb R^\ell$, $\Phi = (u_1,\ldots,u_\ell)$ and $\Psi:M \rightarrow \mathbb R^{k-\ell}$, $\Psi = (u_{\ell+1},\ldots,u_k)$. Then $\lvert \Phi(x) \rvert = \lvert \Psi(x) \rvert$ for all $x \in M$, as well as $\Phi^\ast g_{\mathbb R^\ell} = \Psi^\ast g_{\mathbb R^{k-\ell}}$, with $g_{\mathbb R^\ell}$ and $g_{\mathbb R^{k-\ell}}$ denoting the standard metrics on $\mathbb R^\ell$ and $\mathbb R^{k-\ell}$, respectively. Both conditions together imply that the normalized maps $\tilde \Phi: M \to \mathbb S^{\ell-1}$, $\tilde \Phi(x) = |\Phi(x)|^{-1}\Phi(x)$ and $\tilde \Psi: M \to \mathbb S^{k-\ell-1}$, $\tilde \Psi(x) = |\Psi(x)|^{-1}\Psi(x)$ satisfy a similar pull-back metric condition: $\tilde \Phi^\ast g_{\mathbb S^{\ell-1}} = \tilde \Psi^\ast g_{\mathbb S^{k-\ell-1}}$.

To show the last claim, consider a vector field $V \in \Gamma(TM)$. Our assumptions on $\Phi$ and $\Psi$ imply $|V\Phi|^2 = |V\Psi|^2$, and furthermore
\begin{align*}
    \Phi \cdot (V\Phi) = V\left(\tfrac12 \lvert \Phi\rvert^2 \right) = V\left(\tfrac12 \lvert \Psi\rvert^2 \right) = \Psi \cdot (V\Psi).
\end{align*}
These identities imply the desired claim, via polarization, as
\begin{align*}
    \left(\tilde \Phi^\ast g_{\mathbb S^{\ell-1}}\right)(V,V) &= \left\lvert \Phi \right\rvert^{-2} \left\lvert V\Phi - \left( \left\lvert \Phi \right\rvert^{-2} \Phi \cdot (V\Phi) \right) \Phi \right\rvert^2 \\
    &= \left\lvert \Phi \right\rvert^{-2} \left\lvert V \Phi \right\rvert^{2}
    - \left\lvert \Phi \right\rvert^{-4} \left( \Phi \cdot (V \Phi)\right)^2 = \left( \tilde \Psi^\ast g_{\mathbb S^{k-\ell-1}}\right)(V,V).
\end{align*}

Of course, it may happen that $\Phi(x) = \Psi(x) = 0$ for some $x \in M$, and hence, $\tilde \Phi$ and $\tilde \Psi$ are not defined. However, they are defined on a dense open subset. Since all following arguments are local, this is sufficient.

Suppose there exists an open set $U$ where $\mathrm{rk} \, D\tilde \Phi = 1$. Possibly after shrinking $U$, the rank theorem implies there exists $\tau \in C^\infty(U)$ and a function $F: \mathbb R \to \mathbb S^{\ell-1}$ such that $\tilde \Phi = F \circ \tau$. For any $V \in \ker D\tau$, we have $D\tilde \Phi(V) = 0$, and thus also $D\tilde \Psi(V) = 0$, because $\tilde \Phi^\ast g_{\mathbb S^{\ell-1}} = \tilde \Psi^\ast g_{\mathbb S^{k-\ell-1}}$. Hence, there also exists $G: \mathbb R \to \mathbb S^{k-\ell-1}$ with $\tilde \Psi = G \circ \tau$. Denote $\rho = |\Phi| = |\Psi|$. Then the preceding calculation shows there exist functions $g_1,\ldots,g_k$ such that $u_j = (g_j \circ \tau) \rho$ for $j = 1,\ldots,k$. The eigenfunction equation becomes
\begin{align*}
    0 &= (\lambda - \Delta) \big((g_j \circ \tau) \rho\big) \\ &=
    (g_j \circ \tau) ( \lambda \rho - \Delta \rho) - (g_j''\circ \tau) |\nabla \tau|^2 \rho - (g_j' \circ \tau) (\rho \Delta \tau + \nabla \rho \cdot \nabla \tau).
\end{align*}
Note that $|\nabla \tau|^2 \rho$ does not vanish, by assumption. There can be at most $2$ linearly independent functions $g_j$ which satisfy this equation. Indeed, consider a curve $\gamma: (-\varepsilon,\varepsilon) \to M$ such that $\tau \circ \gamma = \mathrm{id}$. Then $g_j$ satisfies the linear ordinary differential equation $g_j''(t) = A(t) g_j(t) - B(t) g_j'(t)$, where
\begin{align*}
    A &= \left( \frac{\lambda \rho - \Delta \rho}{|\nabla \tau|^2 \rho} \right) \circ \gamma, \ B = \left(\frac{\rho \Delta \tau + \nabla \rho \cdot \nabla \tau}{|\nabla \tau|^2 \rho} \right)\circ \gamma.
\end{align*}
If $k \geq 3$, this would imply a linear dependence between $u_1,\ldots,u_k$ along $\gamma$, which is determined solely by the values of $u_1,\ldots,u_k$ and their first derivatives along $\gamma$ at $0$. Thus, it extends to all curves which agree up to first order with $\gamma$ at $0$, and hence, to an open subset of $U$. By unique continuation, the linear dependence now extends to the whole of $M$. This contradicts our assumption that $u_1,\ldots,u_k$ are linearly independent. The case $k \leq 2$ was already excluded earlier, so there exists no open set $U$ where $\mathrm{rk} \, D\tilde \Phi = 1$.

We deduce that $\ell \geq 3$ and $k \geq \ell+3$, and that there exists an open set $U \subseteq M$ where $D\tilde \Phi$ and $D\tilde \Psi$ are of rank at least $2$. Consider the case $\ell = k-\ell = 3$. On $U$, $\tilde \Phi$ and $\tilde \Psi$ are submersions with $\tilde \Phi^\ast g_{\mathbb S^2} = \tilde \Psi^\ast g_{\mathbb S^{2}}$, so there exists a local isometry $F$ of $\mathbb S^2$ with $ \tilde \Phi = F \circ \tilde \Psi$. Since all local isometries of $\mathbb S^2$ are linear, there exists $A \in O(3)$ such that $\tilde \Phi = A \tilde \Psi$ on $U$. Recalling that $\lvert \Phi \rvert = \lvert \Psi \rvert$, we find that $\Phi = A \Psi$ as well, which, by the unique continuation principle, is a contradiction, as $u_1,\ldots,u_k$ are linearly independent.

In conclusion, we have shown that if $u_1,\ldots,u_\ell,\ldots,u_k \in E_\lambda$ are linearly independent eigenfunctions with $\sum_{j=1}^\ell u_j^2 = \sum_{j=\ell+1}^k u_j^2$, then $\min(\ell,k-\ell) \geq 2$, and if additionally $\sum_{j=1}^\ell d u_j \otimes du_j = \sum_{j=\ell+1}^k d u_j \otimes du_j$, then $\min(\ell,k-\ell) \geq 3$ and $\max(\ell,k-\ell) \geq 4$.
\end{proof}

\begin{remark}
    In the proof of \Cref{theorem: nonexistence of low multiplicity nontransverse eigenvalues}, the exact multiplicity of the eigenvalue was never used. This suggests to call an eigenvalue \emph{conformally} $(\ell,m)$-\emph{unstable}, if there exist corresponding eigenfunctions $u_1,\ldots,u_\ell,u_{\ell+1},\ldots,u_{\ell+m}$ which are linearly independent and satisfy
    $\sum_{j=1}^\ell u_j^2 = \sum_{j=\ell+1}^{\ell+m} u_j^2$, and $(\ell,m)$-\emph{unstable} if, additionally, $\sum_{j=1}^\ell du_j \otimes du_j = \sum_{j=\ell+1}^{\ell+m} du_j \otimes du_j$. Then there are no conformally $(0,m)$- or $(1,m)$-unstable eigenvalues, and no $(2,m)$- or $(3,3)$-unstable eigenvalues.

    This allows for an alternative proof of \Cref{theorem:nonCrossing} for multiplicities up to $6$, which works in a $C^k$ setting just as well as in the present $C^\infty$ setting. Smoothness is only needed for the proof that the set of unstable eigenvalues has infinite codimension (Section \ref{section: nontransverse eigenvalues}), which relies on microlocal analysis.
\end{remark}

\subsection{Examples with many unstable eigenvalues: Flat tori}
\label{sec:torus}

A lattice of full rank $\Lambda \subseteq \mathbb R^n$ gives rise to a torus, $\mathbb T^n = \mathbb R^n/\Lambda$, which inherits its metric from $\mathbb R^n$.
A full basis of eigenfunctions of its Laplace operator is given by $u_\kappa(x) = e^{2\pi i \kappa \cdot x}$, where $\kappa$
ranges over all elements of $\Lambda^\ast = \{\kappa \in \mathbb R^n: \kappa \cdot v \in \mathbb Z \text{ for all } v \in \Lambda\}$, the dual lattice.
The corresponding eigenvalues are $-|\kappa|^2$, $\kappa \in \Lambda^\ast$. Thus, all eigenvalues have even multiplicity. There are many lattices that give rise to tori with eigenvalues of arbitrarily high multiplicity,
e.g., the square lattice $\mathbb Z \oplus \mathbb Z$, or the triangular lattice $\mathbb Z \oplus e^{\frac{2i\pi}{3}} \mathbb Z$. There are examples of flat tori where some eigenvalues have multiplicity $2$, but there are also eigenvalues of higher multiplicity, e.g., if $\Lambda = \mathbb Z \oplus 2 \mathbb Z$. In this case, there will be both conformally unstable and stable eigenvalues (as a consequence of the following proposition).

\begin{proposition}
\label{ex:nontransverse}
    Consider the torus $\mathbb T^n = \mathbb R^n/\Lambda$, where $\Lambda$ is a lattice of full rank. An eigenvalue of its Laplace-Beltrami operator on the corresponding torus is unstable if (and in dimension 2 only if) its multiplicity is at least $n(n+1)+2$. It is conformally unstable if and only if its multiplicity is at least $4$.
\end{proposition}

\begin{proof}
    Let $\lambda$ be a Laplace-Beltrami eigenvalue of $\mathbb T^n$. If it has multiplicity $2$, it cannot be conformally unstable, by \Cref{theorem: nonexistence of low multiplicity nontransverse eigenvalues}. If its multiplicity is at least $4$, there exist linearly independent elements $\kappa_1, \kappa_2 \in \Lambda^\ast$ with $|\kappa_1|^2 = |\kappa_1|^2$, which correspond to eigenfunctions $\cos(\kappa_j \cdot x)$ and $\sin(\kappa_j \cdot x)$, $j = 1,2$. Since $\cos(\kappa_1 \cdot x)^2 + \sin(\kappa_1 \cdot x)^2 = \cos(\kappa_2 \cdot x)^2 + \sin(\kappa_2 \cdot x)^2 = 1$, $\lambda$ is conformally unstable.

    To show the first part of the statement, let $\lambda$ be an eigenvalue of multiplicity $2m$. The corresponding eigenfunctions are 
    $\cos(\kappa_j \cdot x), \sin(\kappa_j \cdot x)$, where $\kappa_j \in \Lambda^\ast$ are lattice elements with $-|\kappa_j|^2 = \lambda$, and $j = 1,\ldots,m$.
    We will look for a nontrivial choice of coefficients $\mu_1,\ldots,\mu_m$ such that
    \begin{align*}
        &\sum_{j=1}^m \mu_j \left( \cos(\kappa_j \cdot x)^2 + \sin(\kappa_j \cdot x)^2 \right) = 0 \text{ and }\\
        &\sum_{j=1}^m \mu_j \left( \left(d\cos(\kappa_j \cdot x)\right)^2 + \left(d\sin(\kappa_j \cdot x)\right)^2 \right) = 0,
    \end{align*}
    which, by \Cref{proposition:submersion equivalent linear nondeg}, would imply that $\lambda$ is unstable. The first line is equivalent to $\sum_{j=1}^m \mu_j = 0$, while the second line reduces to $\sum_{j=1}^m \mu_j \kappa_j \kappa_j^T = 0$, a system of $\frac{n(n+1)}{2}$ equations. In view of Proposition \ref{proposition: further reduction} it is not surprising that the first equation is linearly dependent on the remaining ones: $\mathrm{tr}(\kappa_j \kappa_j^T) = |\kappa_j|^2 = -\lambda$. Thus, if $m \geq \frac{n(n+1)}{2} + 1$, there exists a nontrivial solution to this system, and $\lambda$ is unstable.

    It may be that not all components of the system $\sum_{j=1}^m \mu_j \kappa_j \kappa_j^T = 0$ are linearly independent, in which case $\lambda$ is unstable even if its multiplicity is less than $n(n+1)+2$. However, in dimension two, an eigenvalue of multiplicity less than $8$ has multiplicity $6$, and therefore, due to \Cref{theorem: nonexistence of low multiplicity nontransverse eigenvalues}, is not unstable.
\end{proof}

\begin{proposition}
\label{ex:nontransverse_structure}
    Consider the first positive eigenvalue $\lambda$ of the Laplace-Beltrami operator on the square torus $\mathbb T^2 = \mathbb R^2/\mathbb Z^2$ with its standard metric $g_0$. Then the set of metrics $g$ in the conformal class $\mathcal G(\mathbb T^2, g_0)$ such that $\lambda$ retains multiplicity $4$ has codimension $9$ near $g_0$, but is not a submanifold of $\mathcal G(\mathbb T^2, g_0)$.
\end{proposition}

This observation rests on the surprisingly subtle fact that the eigenvalue $\lambda$ satisfies a weaker form of transversality, also introduced by Colin de Verdi\`ere \cite{CdV1988}. Called the \emph{Weak Arnold Hypothesis} in \cite{CdV1988}, it is also known as \emph{weak stability}, see, e.g., \cite{Anne1994}.

\begin{definition}
    Let $(M,g_0)$ be a smooth, closed Riemannian manifold. We say an eigenvalue $\lambda \in \sigma(\Delta_{g_0})$ of multiplicity $m$ is \emph{weakly stable} if, given $\mathcal U \subseteq \mathcal G(M)$ and $\pi: \mathcal U \to \mathbb R^{\frac{m(m+1)}{2}}$ as defined in \Cref{section: local structure theorem}, there exists $\varepsilon > 0$ such that for every continuous map $\tilde \pi: \mathcal U \to \mathbb R^{\frac{m(m+1)}{2}}$ with $\|\tilde \pi - \pi\|_{C^0(\mathcal U)} < \varepsilon$, we have $\tilde \pi^{-1}(\{\lambda \mathrm{id}\}) \neq \varnothing$.
    
    If the analogous statement holds with $\mathcal G(M)$ replaced by $\mathcal G(M,g_0)$, we call $\lambda$ \emph{weakly conformally stable}.
\end{definition}

\begin{lemma}
\label{ex:WAH_2d}
    Let $(M,g_0)$ be a two-dimensional Riemannian manifold. Let $\lambda$ be a nonzero eigenvalue of $\Delta_{g_0}$ of multiplicity $m$. Assume $\lambda$ is conformally unstable, and that $\{A \in \mathbb R^{\frac{m(m+1)}{2}}: A^{\alpha\beta} u_{\alpha}u_\beta = 0\}$ is one-dimensional. Assume furthermore that $M$ possesses an isometric involution $\Phi$ such that 
    \begin{align*}
        A^{\alpha\beta} u_\alpha(x)u_\beta(y) = - A^{\alpha\beta} \Phi^\ast u_\alpha(x) \Phi^\ast u_\beta(y)
    \end{align*}
    Then $(g_0,\lambda)$ is weakly conformally stable, and the \tcodim\ of the set of metrics $g \in \mathcal G(M,g_0)$ such that $\lambda$ retains multiplicity $m$ is exactly $\frac{m(m+1)}{2}-1$.
\end{lemma}

\begin{proof}
Consider $\mathcal U \subseteq \mathcal G(M, g_0)$ and $\pi: \mathcal U \to \mathbb R^{\frac{m(m+1)}{2}}$ as in \Cref{section: local structure theorem}. The set $\mathcal S_m \subseteq \mathcal U$ of metrics for which $\lambda$ retains multiplicity $m$ is $\pi^{-1}(\{\mu \cdot \mathrm{id}_m, \mu \in \mathbb R\})$.

Let $A$ be a nonzero matrix -- unique up to scaling -- which satisfies $A^{\alpha\beta} u_{\alpha}u_\beta = 0$. As discussed in \Cref{section: local structure theorem},
\begin{align*}
    D\pi(g_0)[T_{g_0}\mathcal U] = \{B \in \mathbb R^{\frac{m(m+1)}{2}}: A^{\alpha\beta} B_{\alpha\beta} = 0\}.
\end{align*}
Denote $K = \ker D \pi(g_0)$, so that $D\pi(g_0)[T_{g_0}\mathcal U] = A^\bot$. An essentially finite-dimensional argument now shows that $\mathcal S_m$ has finite codimension in the sense of \Cref{def:tcodim} provided the quadratic form $A^{\alpha\beta} (D^2\pi(g_0)[h,h])_{\alpha\beta}$ is indefinite on $K$.

Indeed, let $\phi: [-1,1]^{N}\times[-1,1]^2 \to \mathcal U$ be any map such that $\phi(0,0,0) = g_0$, $D_1\phi(0,0,0)[T_0 \mathbb R^{N} ,0,0] = A^\bot$, $D_2\phi(0,0,0) = D_3\phi(0,0,0) = 0$ and 
\begin{align*}
    A^{\alpha\beta} D_2^2 \phi(0,0,0)_{\alpha\beta} < 0 < A^{\alpha\beta} D_3^2\phi(0,0,0)_{\alpha\beta}
\end{align*}
By the implicit function theorem, there exists a submanifold $S \subseteq [-1,1]^{N}\times[-1,1]^2$ such that $P_{A^\bot} \phi = \lambda \, \mathrm{id}$ on $S$ and furthermore $\{0\} \times \mathbb R^2 \subseteq TS$. On $S$, the indefiniteness of $A^{\alpha\beta} D^2 \phi(0,0,0)_{\alpha\beta}$ ensures that $A^{\alpha\beta} \phi_{\alpha\beta}$ takes both positive and negative values, so by the intermediate value theorem $P_{A} \phi = 0$ somewhere on $S$. The existence of $S$ and the fact that $A^{\alpha\beta} \phi_{\alpha\beta}$ takes both positive and negative values is stable under $C^1$-perturbation of $\phi$, so $\mathcal S_m$ has \tcodim at most $N+1$.

Recall that 
\begin{align*}
    \pi(g) &=
    Q(g)^{-1} S(g) P(g_0) \Delta(g) P(g) Q(g),
\end{align*}
where $P(g)$ is the Riesz projection corresponding to a circle $\Gamma \subseteq \mathbb C$ encircling $\lambda$, $Q: \mathbb R^{m} \to E_\lambda(g_0)$ is a map such that $\left(P(g) Q(g) e_\alpha, P(g) Q(g) e_\beta \right)_{L^2(M,g)} = \delta_{\alpha\beta}$ and $S: E_\lambda(g_0) \to E_\lambda(g_0)$ is implicitly defined by $S(g) P(g_0) P(g) P(g_0) = P(g_0)$. Differentiating twice and observing that, since $\mathrm{id}_m = Q(g)^{-1} S(g) P(g_0) P(g) Q(g)$ and $P(g_0) \Delta(g_0) = \lambda P(g_0)$, all terms where no derivative falls on $\Delta$ cancel, we find
\begin{align*}
    \ddot \pi &= Q^{-1} P \left( \ddot \Delta + 2(\dot \Delta \dot P + \dot \Delta \dot Q Q^{-1} - \dot Q Q^{-1} P \dot \Delta + \dot S P \dot \Delta) \right) Q \\
    &= Q^{-1} P \left( \ddot \Delta + 2(\dot \Delta \dot P + \dot \Delta \dot Q Q^{-1} - \dot Q Q^{-1} P \dot \Delta - \dot P P \dot \Delta) \right) Q.
\end{align*}
Here, $\dot Q$ is shorthand for $\frac{d}{dt}|_{t=0} Q(g_t)$, and analogously for $\pi, P, \Delta$ and $S$. To obtain the last line, we used that $\dot S P + P \dot P P = 0$ by the defining relation of $S$. The resulting expression cannot necessarily be simplify very conveniently. However, if $\dot g$ is such that $( \dot \Delta u, v)_{L^2(M,g_0)} = 0$ for any $u,v \in E_\lambda(g_0)$, and hence, $P\dot \Delta P = 0$, then nearly all expressions cancel and we obtain, after taking inner products,
\begin{align*}
    \ddot \pi_{\alpha\beta}
    &= \left( (\ddot \Delta + 2 \dot \Delta \dot P) u_\alpha, u_\beta \right)_{L^2(M,g_0)}.
\end{align*}
 Since the curve $g_t$ lies in the conformal class $\mathcal G(M,g_0)$, we may write $g_t = f_t g_0$. By conformal covariance of the Laplacian in two dimension, $\Delta_{g_t} = f_t \Delta$. Hence,
 \begin{align*}
     A^{\alpha\beta} \ddot \pi_{\alpha\beta}
    &= A^{\alpha\beta} \left( (\ddot f \Delta + 2 \dot f \Delta \dot P) u_\alpha, u_\beta \right)_{L^2(M,g_0)} \\ &= A^{\alpha\beta} \int_M \ddot f A^{\alpha\beta} u_\alpha u_\beta \, d\mu_{g_0} + 2 A^{\alpha\beta}\left( \dot f \Delta \dot P u_\alpha, u_\beta \right)_{L^2(M,g_0)} \\
    &= 2 \lambda A^{\alpha\beta}\left(\dot f (\Delta \circ L \circ \dot f \circ \Delta) u_\alpha, u_\beta \right)_{L^2(M,g_0)}.
 \end{align*}
 To obtain the last line, we used $A^{\alpha\beta} u_\alpha u_\beta = 0$ and the identity $\dot P = L \circ \dot \Delta$, where $L$ satisfies $(\lambda - \Delta) \circ L = \mathbb I - P$. This identity lets us reduce the expression above further.
 \begin{align*}
     A^{\alpha\beta} \ddot \pi_{\alpha\beta}
    &= 2 \lambda A^{\alpha\beta}\left(\dot f (\lambda L - \mathbb I + P) (\dot f u_\alpha), u_\beta \right)_{L^2(M,g_0)} \\
    &= 2 \lambda^2 A^{\alpha\beta} \left(\dot f L (\dot f u_\alpha), u_\beta \right)_{L^2(M,g_0)} \\
    &= 2\lambda^2 \int_{M} \dot f(x) \dot f(y) L(x,y) A^{\alpha\beta} u_{\alpha}(x) u_{\beta}(y).
 \end{align*}
 
To establish indefiniteness of $A^{\alpha\beta} D^2\pi(g_0)_{\alpha\beta}$ on $K$, it suffices to show $A^{\alpha\beta} \ddot \pi_{\alpha\beta}$ is nonzero for some curve $g_t$ in $\mathcal U$ tangent to $K$. This follows from the existence of the involution $\Phi$ as assumed in the statement of this lemma. The kernel $L$ is invariant under pull-back by the isometry $\Phi$, while the term $A^{\alpha\beta} u_{\alpha}(x) u_{\beta}(y)$ changes sign. Hence, $A^{\alpha\beta} \ddot \pi_{\alpha\beta} > 0$ along a curve $g_t$ implies that $A^{\alpha\beta} \ddot \pi_{\alpha\beta} < 0$ along the curve $\Phi^\ast g_t$.

Suppose for the sake of contradiction that $A^{\alpha\beta} \ddot \pi_{\alpha\beta} = 0$ along all curves $g_t$ with $\dot g \in K$. By polarizing, we find that
\begin{align*}
    \int_{M} f_1(x) f_2(y) L(x,y) A^{\alpha\beta} u_{\alpha}(x) u_{\beta}(y) = 0,
\end{align*}
for all $f_1,f_2 \in K$ (identifying $K \in T_{g_0} \mathcal U$ with $\{f \in C^\infty(M): f g_0 \in K\}$ for the sake of simplicity). By density of pure tensors, and since $L(x,y) \in L^2(M,g_0)$, $A^{\alpha\beta} u_{\alpha}(x) u_{\beta}(y) \in C^\infty(M)$, $L(x,y) A^{\alpha\beta} u_{\alpha}(x) u_{\beta}(y)$ is contained in the orthogonal complement of $K \otimes K$ in $L^2(M,g_0)^{\otimes 2}$, i.e. in $K^\bot \otimes K^\bot \oplus K^\bot \otimes K \oplus K \otimes K^\bot$. Since $K^\bot \subseteq C^\infty(M)$, the wave-front set of $L(x,y) A^{\alpha\beta} u_{\alpha}(x) u_{\beta}(y)$ at any point $(x,y) \in M\otimes M$ lies in the set $T_x M \oplus \{0\} \cap \{0\} \oplus T_y M$ \cite[Theorem 8.2.9]{HormanderI}. However, since $L(x,y) A^{\alpha\beta} u_{\alpha}(x) u_{\beta}(y)$ is the Schwartz kernel of the pseudodifferential operator $A^{\alpha\beta} (u_\alpha \circ L \circ u_\beta)$, its wave-front set is also constrained to lie in the conormal bundle of the diagonal in $M\times M$, i.e., in $\{(x,y,\xi,\zeta): x=y, \xi = -\zeta \}$  \cite[Theorem 18.1.16]{HormanderIII}, and hence $L(x,y) A^{\alpha\beta} u_{\alpha}(x) u_{\beta}(y) \in C^\infty(M \times M)$. Since $L(x,y) = \frac{2}{\pi} \log(\frac{|x-y|}{2}) + \mathcal O(1)$ near the diagonal (see the proof of \Cref{lemma: smooth kernel}), and $A^{\alpha\beta} u_{\alpha}(x) u_{\beta}(y)$ does not vanish to infinite order anywhere, we arrive at the same contradiction as in the proof of \Cref{prop:generalizednontransverseMetric}.

Hence, $\mathcal S_m$ has finite codimension. By \Cref{theorem:nontransverse}, which says that metrics admitting an unstable eigenvalue form a set of infinite codimension, $\mathcal S_m$ must contain $g_1$ arbitrarily close to $g_0$ such that $\pi(g_1) = \lambda \mathrm{id}$ and $\lambda$ is a \emph{stable} eigenvalue of $\Delta_{g_1}$. By rescaling the metric, we may assume $\lambda = \lambda(g_0)$. Since transversal intersections are stable under $C^0$-perturbations, any sufficiently small $C^0$-perturbation $\tilde \pi$ of $\pi$ will take the value $\lambda \mathrm{id}$ somewhere, proving weak stability. Furthermore, the \tcodim\ of $\mathcal S_m$ near $g_1$ is exactly $\frac{m(m+1)}{2}-1$ by \Cref{theorem:transverse structure theorem}. By \Cref{theorem:nonCrossing}, the codimension of $\mathcal S_m$ near any other point of $\mathcal U$ is at most $\frac{m(m+1)}{2}-1$, proving the second claim.
\end{proof}

\begin{proof}[Proof of \Cref{ex:nontransverse_structure}]
    Let $m=4$, and let $\mathcal U \subseteq \mathcal G(\mathbb T^2,g_0)$, $\pi: \mathcal U \to \mathbb R^{\frac{m(m+1)}{2}}$ and $\mathcal S_4$ be as in the proof of \Cref{ex:WAH_2d}. Suppose for the sake of contradiction that $\mathcal S_4$ was a submanifold of $\mathcal U$, which by \Cref{ex:WAH_2d} must be of codimension $9$. By first-order perturbation theory (as detailed in \Cref{theorem: general local structure theorem}), $T_{g_0} \mathcal S_4$ is contained in the space 
    \begin{align*}
        N &= \mathrm{span}(\{u_{\alpha}u_{\beta}: 1\leq \alpha \leq \beta\leq 4\})^\bot \oplus \mathrm{span}(1) \\
        &= \mathrm{span}\left(\begin{smallmatrix}\cos\\\sin\end{smallmatrix}(2x),\begin{smallmatrix}\cos\\\sin\end{smallmatrix}(2y), \begin{smallmatrix}\cos\\\sin\end{smallmatrix}(x+y),\begin{smallmatrix}\cos\\\sin\end{smallmatrix} (x-y)\right)^\bot
    \end{align*}
    The action of $\mathbb T^2$ on itself by translation descends to an action on $\mathcal G(\mathbb T^2,g_0)$ via pull-back, and $\mathcal S_4$ is invariant under this action since the spectrum of the Laplacian $\Delta_g$ is. Since $g_0$ is a fixed point, $T_{g_0} \mathcal S_4 \subseteq T_{g_0} \mathcal G(\mathbb T^2,g_0) \cong \mathcal C^\infty(\mathbb T^2)$ would have to be translation-invariant as well. Since $(T_{g_0} \mathcal S_4)^\bot$ is a real subspace of $C^\infty(\mathbb T^2)$, it splits into a sum of two-dimensional irreducibles $\mathrm{span}(\cos(kx +\ell y),\sin(kx +\ell y))$, $k,\ell \in \mathbb Z$ and possibly $\mathrm{span}(1)$, but the latter is impossible since $\mathrm{span}(1) \subseteq T_{g_0} \mathcal S_4$ by the scaling invariance of $\mathcal S_4$. This contradicts the assumption that $T_{g_0} \mathcal S_4$ is of codimension $9$.
\end{proof}

\subsection{An example with no unstable eigenvalues: The round $2$-sphere}
\label{sec:sphere}

A homogeneous space exhibiting very different behaviour is $\mathbb S^2$, with its standard metric.
Its eigenvalues $0 = \lambda_0 > \lambda_1 > \lambda_2 > \ldots$ are given by $\lambda_\ell = -\ell(\ell+1)$, $\ell \in \mathbb N$,  and have multiplicity $2\ell+1$. The corresponding eigenspace comprises all homogeneous harmonic polynomials of degree $\ell$, restricted to $\mathbb S^2$.
It may seem, due to the high multiplicities occurring in this example, that some or all of these eigenvalues could be unstable. However, this is not the case, as was shown by Colin de Verdi\`ere \cite{CdV1988}. We reproduce the proof here because it is instrumental in understanding the higher dimensional case, which was not discussed in \cite{CdV1988}.

\begin{proposition}[{\cite[pg.\ 186]{CdV1988}}]
\label{prop: sphere}
    All nonzero eigenvalues of the Laplace-Beltrami operator on $\mathbb S^2$ with its standard metric are conformally stable.
\end{proposition}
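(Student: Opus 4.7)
By \Cref{proposition:submersion equivalent linear nondeg}, what I need to show is that for any basis $\{u_\alpha\}_{\alpha=1}^{2\ell+1}$ of the degree-$\ell$ spherical harmonic eigenspace $E_{\lambda_\ell}$, and any symmetric matrix $A$ satisfying $A^{\alpha\beta} u_\alpha u_\beta \equiv 0$ on $\mathbb{S}^2$, one must have $A = 0$. The first step is to extend each $u_\alpha$ to its unique homogeneous harmonic polynomial $P_\alpha$ on $\mathbb{R}^3$. Then $A^{\alpha\beta} P_\alpha P_\beta$ is homogeneous of degree $2\ell$ and vanishes on $\mathbb{S}^2$, hence on all of $\mathbb{R}^3$. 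So the task reduces to proving that the $\mathrm{SO}(3)$-equivariant linear map
\[\Phi : \mathrm{Sym}^2 H_\ell \longrightarrow \mathrm{Pol}_{2\ell}(\mathbb{R}^3), \qquad \Phi(A) = A^{\alpha\beta} P_\alpha P_\beta,\]
has trivial kernel, where $H_\ell$ denotes the space of degree-$\ell$ harmonic polynomials on $\mathbb{R}^3$.

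To handle this, I would exploit the representation theory of $\mathrm{SO}(3)$. Both source and target have dimension $(2\ell+1)(\ell+1)$, and both decompose into $\mathrm{SO}(3)$-irreducibles as $\bigoplus_{k=0}^\ell H_{2k}$, each summand appearing with multiplicity one: the target via the classical ``solid harmonic'' decomposition $\mathrm{Pol}_{2\ell}(\mathbb{R}^3) = \bigoplus_{k=0}^\ell |x|^{2(\ell-k)} H_{2k}$, and the source via Clebsch-Gordan, $H_\ell \otimes H_\ell \cong \bigoplus_{j=0}^{2\ell} H_j$, combined with the fact that the symmetric part picks out precisely the even-index summands $H_{2k}$. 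Since $\Phi$ is $\mathrm{SO}(3)$-equivariant and the isotypic decomposition is multiplicity-free on both sides, Schur's lemma reduces the injectivity of $\Phi$ to showing that $\Phi$ does not annihilate any single summand $H_{2k}$ for $k = 0, 1, \ldots, \ell$.

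To verify this nonvanishing I would plug in a zonal harmonic: fix $\eta \in \mathbb{S}^2$ and set $u(\xi) = P_\ell(\xi \cdot \eta)$, where $P_\ell$ is the Legendre polynomial. Its square linearizes as
\[u^2 = \sum_{k=0}^\ell c_k \, P_{2k}(\xi \cdot \eta), \qquad c_k = \tfrac{4k+1}{2}\int_{-1}^{1} P_\ell(t)^2 P_{2k}(t)\, dt,\]
and by Gaunt's theorem, equivalently nonvanishing of the Wigner $3j$-symbol $\bigl(\begin{smallmatrix}\ell&\ell&2k\\ 0&0&0\end{smallmatrix}\bigr)$ whose selection rules $0 \leq 2k \leq 2\ell$ and $2\ell + 2k$ even are both satisfied, every coefficient $c_k$ is nonzero. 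Decomposing $u \otimes u = \sum_k w_{2k}$ with $w_{2k} \in H_{2k} \subset \mathrm{Sym}^2 H_\ell$, equivariance forces $\Phi(w_{2k})$ to be the $H_{2k}$-component of $u^2$, namely $c_k P_{2k}(\xi \cdot \eta) \not\equiv 0$. Hence $\Phi$ is nontrivial on every isotypic summand, and injectivity follows. The main obstacle is precisely this nonvanishing of each Gaunt coefficient: the extremal cases $k = \ell$ (witnessed by squaring $(x_1+ix_2)^\ell$ to obtain the highest-weight vector $(x_1+ix_2)^{2\ell}$ of $H_{2\ell}$) and $k = 0$ (the addition formula giving $\sum_\alpha u_\alpha^2 = \mathrm{const} \neq 0$) are transparent, but the intermediate values seem to genuinely require the classical Legendre triple-integral identity rather than Schur's lemma alone.
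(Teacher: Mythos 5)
Your proposal is correct, and the overall strategy — reduce via $SO(3)$-equivariance and Schur's lemma to the statement that a certain equivariant map $\Phi$ does not vanish on any isotypic summand, then verify that nonvanishing by an explicit computation involving zonal objects — is the same as in the paper. Where you diverge is in the explicit verification step and in which side of the dimension count you work with. The paper proves \emph{surjectivity} of $\Phi$ by exhibiting the products $Y^m_\ell Y^{-m}_\ell = c(m,\ell)^2 (x^2+y^2)^m Q^m_\ell(z)^2$, for $m = 0,\ldots,\ell$, as $\ell+1$ elements of $\img \Phi$ lying in the $(\ell+1)$-dimensional space $\mathcal Z$ of $z$-axis invariants; they are linearly independent simply because they have distinct degrees in $z$. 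This needs nothing beyond the definition of Laplace's spherical harmonics and a degree count. You instead prove \emph{injectivity} directly: you invoke the Clebsch--Gordan decomposition of $\mathrm{Sym}^2 H_\ell$ (which the paper sidesteps entirely), pair it with the solid-harmonic decomposition of $\mathrm{Pol}_{2\ell}$, and then verify nonvanishing of $\Phi$ on each $H_{2k}$-summand by squaring a single zonal harmonic $P_\ell(\xi\cdot\eta)$ and appealing to the nonvanishing of the Gaunt coefficients $\int_{-1}^1 P_\ell^2 P_{2k}\,dt$, i.e.\ to the Wigner $3j$-symbol selection rules. Both routes are valid. The paper's computation is more elementary and self-contained — it only uses linear independence of monomials — while yours leans on Gaunt's theorem, which is classical but genuinely nontrivial (the selection rules alone are necessary, not sufficient; one needs the explicit $3j$ formula or an equivalent argument to know the coefficients don't vanish). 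On the other hand, your argument identifies exactly which scalar $\Phi$ acts by on each isotypic piece, which is slightly more informative and arguably more standard from a representation-theoretic viewpoint.
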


This statement is related to the Clebsch-Gordan theory of $SO(3)$, for which see \cite[Appendix C]{HallRepTheory}. However, the proof only uses basic properties of spherical harmonics and elementary representation theory. A particularly neat overview of all necessary facts is given in \cite[Chapter 7]{KosmannSchwarzbach}. 

\begin{proof}
    Consider the $\ell^{th}$ eigenvalue, $\lambda = -\ell(\ell+1)$, of the Laplace-Beltrami operator on $\mathbb S^2$.
    The isometry group of the sphere, $SO(3)$, is irreducibly represented on the corresponding eigenspace $E_{\lambda}$ via $g \cdot u = (g^{-1})^\ast u$. Denote the set of harmonic homogeneous polynomials of degree $\ell$ in three variables by $\mathcal H_\ell$, so that $E_{\lambda} = \mathcal H_\ell \rvert_{\mathbb S^2}$.
    
    Consider the set of even homogeneous polynomials of degree $2\ell$, which we shall call $\mathcal E_{2\ell}$. It is likewise a representation of $SO(3)$, which decomposes into 
    \begin{align*}
        \mathcal E_{2\ell}(\mathbb R^3) = \oplus_{j=0}^\ell \lVert x \rVert^{2(\ell-j)} \mathcal H_{2j} \cong \oplus_{j=0}^\ell \mathcal H_{2j}
    \end{align*}
    The product of two eigenfunctions $u,v \in E_\lambda$ is an element of $\mathcal E_{2\ell} \rvert_{\mathbb S^2}$. This suggests to consider the linear map $\Phi: \vee^2 \mathcal H_\ell \to \mathcal E_{2\ell}(\mathbb R^3)$ which sends $\frac{1}{2} \left( u \otimes v + v \otimes u \right)$ to the polynomial $u(x) v(x)$. Conformal nondegeneracy of $\lambda$ is equivalent to $\ker \Phi = \{0\}$. A quick dimension count reveals that $\dim \vee^2 \mathcal H_\ell = (\ell+1)(2\ell+1) = \dim \mathcal E_{2\ell}$, hence $\ker \Phi = \{0\}$ if and only if $\Phi$ is surjective.
    
    By Schur's Lemma it is sufficient to show that each irreducible component $\mathcal H_{2j}$ of $\mathcal E_{2\ell}(\mathbb R^3)$ intersects $\img\Phi$ nontrivially. To show this, consider the subspace $\mathcal Z \subseteq \mathcal E_{2\ell}$ of polynomials invariant under rotation around the $z$-axis. It has dimension $\ell+1$ and intersects each $\mathcal H_{2j}$ in a one-dimensional space generated by the ``zonal'' spherical harmonic of degree $2j$. Thus, it suffices to show $\mathcal Z \subseteq \img \Phi$.

    One basis of $\mathcal H_\ell$ is given by \emph{Laplace's spherical harmonics}: For $m \in \{0,\ldots,\ell\}$, let $Y^m_\ell(x,y,z) = c(m,\ell) (x+iy)^m Q^m_\ell(z)$, where $Q^m_\ell(z)$ is the respective associated Legendre polynomial, divided by $(1-z^2)^{\frac{m}{2}}$, and $c(m,\ell)$ is a normalization constant. For $m \in \{-\ell\ldots,0\}$, instead $Y^m_\ell(x,y,z) = c(|m|,\ell) (x-iy)^{|m|} Q^{|m|}_\ell(z)$. The products
    \begin{align*}
        Y^m_\ell(x,y,z)Y^{-m}_\ell(x,y,z) = c(m,\ell)^2 (x^2+y^2)^m Q^m_\ell(z)^2
    \end{align*}
    are linearly independent as $m$ ranges from $0$ to $\ell$, since they are all of distinct degree in $z$. Because these are $\ell+1$ linearly independent functions in $\mathcal Z \cap \img \Phi$, we conclude that $\mathcal Z \subseteq \img \Phi$ as desired.
\end{proof}

The situation changes drastically in higher dimensions. On $\mathbb S^n$ with its standard metric, the eigenvalues of the Laplacian are $\lambda_\ell = -\ell(\ell+n-1)$, $\ell \in \mathbb N$,  and have multiplicity 
\begin{align*}
    \binom{n+\ell}{n} - \binom{n+\ell-2}{n}.
\end{align*}
The proof of \Cref{prop: sphere} could be repeated under some minor changes, if not for one crucial issue:
The dimension of $\vee^2 \mathcal H_\ell$ is a polynomial of degree $2(n-1)$ in $\ell$. On the other hand $\dim \mathcal E_{2\ell}$ is merely of degree $n$ in $\ell$. Hence, if $n \geq 3$, the map $\Phi: \vee^2 \mathcal H_\ell \to \mathcal E_{2\ell}$, which assigns the polynomial $uv \in E_{2\ell}$ to the tensor $\frac{1}{2} \left( u \otimes v + v \otimes u \right)$, must have a nontrivial kernel for all large enough $\ell$. For these $\ell$, the eigenvalue $\lambda_\ell$ is conformally unstable. A slightly more involved dimension comparison shows that $\lambda_\ell$ is in fact unstable for all large enough $\ell$.

Let us obtain a precise upper bound for the degree of the smallest unstable eigenvalue on $\mathbb S^3$. Denote the spaces of homogeneous polynomials in $4$ variables of degree $\ell$ by $\mathcal E_{\ell}$, and the space of harmonic homogeneous polynomials by $\mathcal H_{\ell}$. Consider the map $\vee^2 \mathcal H_\ell \to \mathcal E_{2\ell-2}^{\frac{4\times 5}{2}}$ which sends $\frac{1}{2} \left( u \otimes v + v \otimes u\right)$ to $\left(\frac{du}{dx^{j}}(x) \frac{dv}{dx^k}(x) \right)_{j,k=1}^{4}$. By \Cref{proposition: further reduction}, degeneracy of $\lambda_\ell$ is equivalent to this map having a nontrivial kernel, which is certainly the case if 
\begin{align*}
    &\dim \vee^2 \mathcal H_\ell \geq 10 \dim \mathcal E_{2\ell-2},
\end{align*}
i.e., if the following inequality of binomial coefficients holds:
\begin{align*}
    &\frac{1}{2}\left( \binom{3+\ell}{3} - \binom{1+\ell}{3} + 1\right) \left( \binom{3+\ell}{3} - \binom{1+\ell}{3}\right)
    \geq 10 \binom{1+2\ell}{3},
\end{align*}
For $\ell > 1$, this inequality reduces to
\begin{align*}
    \ell^3 - \frac{65}{3}\ell^2 - \frac{44}{3}\ell -2 \geq 0,
\end{align*}
which is satisfied when $\ell \geq 23$ (both reduced and solved via \emph{Mathematica} \cite{Mathematica}). Thus, any eigenvalue of degree at least $23$ of $\mathbb S^3$ is unstable. Since any solution to the degeneracy equation in four variables lifts to one in $n \geq 4$ variables, we conclude that the same holds for all $\mathbb S^n$, $n\geq 3$.

\subsection{Further examples and observations}
\label{sec:misc}

\subsubsection{Products of homogeneous spaces}

Let $M$ be a connected, closed Riemannian manifold of volume $1$ such that its isometry group, $\mathrm{Aut}(G)$, acts transitively on $M$. On any eigenspace $E_\lambda$ of its Laplace-Beltrami operator, the isometry group acts by orthogonal transformations. A well known consequence (see e.g., \cite[Corollary 2.3]{ElSoufi2003}) is that $\sum_{\alpha=1}^m u_\alpha^2 = m$ for any real valued orthonormal basis $\{u_\alpha\}_{\alpha=1}^m$ of $E_\lambda$.

Thus, if $\lambda < 0$ is a Laplace-Beltrami eigenvalue for two such homogeneous spaces $M$ and $M'$, then $\lambda$ becomes a conformally unstable eigenvalue on the product $M \times M'$. After all, given orthonormal bases $\{u_\alpha\}_{\alpha=1}^m$ and $\{v_\alpha\}_{\alpha=1}^{m'}$ of the corresponding eigenspaces on $M$ and $M'$, then $u_1 \otimes 1, \ldots, u_m \otimes 1, 1 \otimes v_1, \ldots, 1 \otimes v_{m'}$ is an orthonormal set of eigenfunctions on $M \times M'$, and hence,
$$\frac{1}{m} \sum_{\alpha = 1}^m (u_\alpha \otimes 1)^2 - \frac{1}{m'} \sum_{\alpha = 1}^{m'} (1 \otimes v_\alpha)^2 = 0$$
implies that $\lambda$ is conformally unstable, by \Cref{proposition:submersion equivalent linear nondeg}. However, it need not be unstable, as the example of the torus shows (see \Cref{ex:nontransverse}).

\subsubsection{Disconnected Manifolds}
\label{subsubsection:disconnected manifolds}
It is easily seen, but instructive that connectedness of $M$ is absolutely necessary for \Cref{theorem:nonCrossing} to hold. To see this, suppose $M$ is the disjoint union of closed Riemannian manifolds $M_1$ and $M_2$ such that $\lambda>0$ is a Laplace-Beltrami eigenvalue of both $M_1$ and $M_2$ with corresponding eigenfunctions $u_1$ and $u_2$. Extended by zero on the other component, these functions are also eigenfunctions on $M$. Since $(u_1 - u_2)^2 = (u_1 + u_2)^2$ and $(du_1 - du_2)^2 = (du_1 + du_2)^2$, $\lambda$ is an unstable eigenvalue of $M$. In this way, an unstable eigenvalue on $M$ arises every time an eigenvalue on $M_1$ coincides with an eigenvalue on $M_2$. In particular, a crossing of simple eigenvalues on $M_1$ and $M_2$, respectively, leads to a smooth hypersurface in $\mathcal G(M)$ of metrics admitting an unstable eigenvalue of multiplicity $2$.

Connectedness of $M$ is used in the proof of \Cref{theorem:nonCrossing} exactly once, to conclude via unique continuation that, for an orthonormal basis $\{u_\alpha\}_{\alpha=1}^m$ of $E_\lambda$ and nonzero $A \in \mathbb R^{\frac{m(m+1)}{2}}$, the function $A^{\alpha\beta}u_\alpha(x)u_\beta(y)$ does not vanish to infinite order along the diagonal in $M\times M$. The failure of \Cref{theorem:nonCrossing} for disconnected $M$ stems from the failure of unique continuation for eigenfunctions in this setting.

\subsubsection{Comparison with the Hodge Laplacian}
\label{subsubsection:comparison with the Hodge Laplacian}
Comparing the unstable eigenvalues of the Laplace-Beltrami operator to those of the Hodge Laplacian is particularly interesting. We call an eigenvalue of the Hodge Laplacian unstable it does not satisfy the SAH. This has a corresponding ``localized'' form analogous to the one derived for the Laplace-Beltrami operator in Proposition \ref{proposition:submersion equivalent linear nondeg}. It was shown in \cite{Kepplinger2022} that the spectrum of the $3$-dimensional Hodge Laplacian in degree $1$ has a more complicated structure than that of the Laplace-Beltrami operator: Eigenvalues corresponding to the exact, positive coexact, and negative coexact spectrum may collide and form unstable eigenvalues. Indeed it is proven that metrics with unstable eigenvalues are a codimension $1$ occurence, that is one cannot hope to avoid them (or double eigenvalues more generally) along $1$-parameter families of Riemannian metrics. In particular, the lowest eigenvalue $\lambda=4$ of the Hodge Laplacian restricted to the coexact spectrum on the round $3$-sphere has multiplicity $6$ and is unstable. \par
This degeneracy is resolved by passing to the curl operator, the square root of the Hodge Laplacian on the coexact spectrum. It is not hard to prove that unstable eigenvalues for the curl operator must have multiplicity at least $4$. This implies that the two curl eigenvalues of lowest absolute value on the round $3$-sphere are stable, as both have multiplicity $3$. It is worth pointing out that conformally unstable eigenvalues of the curl operator occur already in multiplicity $2$, for example in Berger's sphere.

\subsubsection{Decomposability}
The observations \ref{subsubsection:disconnected manifolds} and \ref{subsubsection:comparison with the Hodge Laplacian} in this subsection show that Arnold's transversality hypothesis or a maximal non-crossing rule do not always hold even for very large families of operators. Some thought reveals that all of the aforementioned situations have a common problem: the family of operators is in a certain sense decomposable. More precisely, there exist smooth families of projections
\begin{align*}
    \pi_i (q): H \to H_i (q)
\end{align*}
which commute with the operator $A(q)$ for all $q \in \mathcal{X}$, where $H_i$ are nontrivial closed subspaces so that $H= \bigoplus_{i} H_i (q)$. For the Laplace-Beltrami operators on disconnected manifolds these projections are given naturally by the restriction to any of the connected components, in the case of the Hodge Laplacian we get (metric dependent) projections to the closed subspaces associated with the exact, positive coexact, and negative coexact spectrum. If the spectra of the $A(q)$ restricted to the closed subspaces $H(q)$ behave sufficiently independently one should expect collisions of eigenvalues coming from different $H(q)$.

\subsubsection{Equivariance}
There is a different but related issue regarding generic simplicity of the spectrum. Sometimes a family of operators is obstructed from having simple spectrum because of an internal symmetry of the operators themselves, i.e., if there exists an action on the domain of the operator which commutes with the operator. The simplest example known to us is the Hodge Laplacian on $1$-forms in dimension $2$ where eigenvalues coming from functions and those coming from $2$-forms coincide, independent of the choice of metric. This is a consequence of the fact that the Hodge Laplacian commutes with the Hodge star operator which is an almost complex structure on the space of $1$-forms in dimension $2$.

This is a more general effect in dimension $2n$ for forms of degree $n$, see the work of Millman \cite{Millman1980}. Interestingly, a similar phenomenon occurs for the middle degree $2n$ in dimensions $4n+1$ as shown by Gier and Hislop \cite[Section 3.1]{GierHislop2016}.

\emph{Thus, it seems that equivariance with respect to some group action is an obstruction to generic simplicity, and decomposability one to generic simplicity along $1$-parameter families.}

Arnold exhibited an example of a family of operators in which both phenomena occur at the same time \cite[Appendix 10, subsection D]{Arnold1997MathMethods}.
He pointed out that Dirichlet eigenvalues of the Laplacian on domains with a $\frac{1}{3}$-rotational symmetry come in two types: those of multiplicity $1$, whose associated eigenfunction is symmetric, and those of multiplicity $2$, whose associated eigenfunctions are antisymmetric with respect to the $\mathbb Z_3$ action. By varying the domain while preserving the symmetry one can achieve that symmetric and antisymmetric eigenvalues collide (or pass through one another), but eigenvalues of the same type will never merge. The equivariance of this family of operators leads to non-simple spectrum, and the decomposability, given by projections to the symmetric and antisymmetric eigenspaces, leads to collisions among these different types of eigenvalues. It is worth pointing out that there are no more collision events once one restricts to either symmetric or antisymmetric eigenvalues.

\subsection{Open questions}

We conclude our paper with a list of natural questions, and some ideas for how one might resolve them.

\begin{question}
    \label{q:multiplicity7}
    Do there exist unstable eigenvalues of multiplicity $7$?
\end{question}

\Cref{theorem: nonexistence of low multiplicity nontransverse eigenvalues} leaves an interesting gap: Unstable eigenvalues cannot have multiplicity less than seven, but all examples known to the authors have multiplicity at least eight. In the proof of \Cref{theorem: nonexistence of low multiplicity nontransverse eigenvalues}, nonexistence of unstable eigenvalues of multiplicity six follows from the rigidity of local isometries of $\mathbb S^2$. Local isometric immersions of $\mathbb S^2$ into $\mathbb S^3$ are no longer fully rigid, but they do exhibit an interesting partial rigidity phenomenon, as was shown by O'Neill and Stiel \cite{ONeillStiel}: As in the case of local isometric immersions of $\mathbb R^2$ into $\mathbb R^3$, their image is ruled by geodesics. One might use this to show unstable eigenvalues of multiplicity seven do not exist, but this seems to go beyond the scope of the present paper. Let us also note that there exists a certain left-invariant metric on $SU(2) \cong \mathbb S^3$ admitting a first eigenvalue of multiplicity $7$ (as shown by Urakawa in \cite{urakawa}), and thus a plausible candidate for an unstable eigenvalue of this multiplicity. However, a closer examination reveals that this eigenvalue is stable.

\begin{question}
    What is the local behaviour of the spectrum near metrics with unstable eigenvalues?
\end{question}

According to \Cref{theorem: general local structure theorem}, given a metric $g$ admitting a stable eigenvalue of multiplicity $m$,
the set of perturbations which do not split up this eigenvalue is a smooth manifold of codimension $\frac{m(m+1)}{2}-1$. Understanding the behaviour of an \emph{unstable}
eigenvalue under perturbation is more difficult. The natural defining function $\pi$ from \Cref{section: local structure theorem} is no longer a submersion, but it could still cut out a submanifold (much like the function $y^2 = 0$ cuts out a submanifold of $\mathbb R^2_{x,y}$). However, this seems unlikely. It would be interesting, but potentially difficult, to exhibit a rigorously treated example.

The paper \cite{Arnold1972} contains a discussion of the complicated shape of the singularity of the set of symmetric matrices with an eigenvalue of multiplicity at least $2$ along the lower dimensional submanifold of symmetric matrices with an eigenvalue of multiplicity $3$ (Remark 1.4 in \cite{Arnold1972}). By \Cref{theorem: general local structure theorem}, near a stable eigenvalue of multiplicity $3$, the set of metrics splitting off exactly one eigenvalue inherits the shape of this singularity, up to a trivial factor of finite codimension. The shape of the tangent cone of the analogous singularity near an \emph{unstable} eigenvalue is presumably topologically different, but one would likely need to take at least a second variation to compute it exactly.

\begin{question}
    Does there exist a ``natural'' parameter space containing the Laplace-Beltrami operators with respect to which there are no unstable eigenvalues?
\end{question}

By enlarging the parameter space, more and more ``directions'' in which eigenvalues can split up become available. Thus, it is easy to find unstable eigenvalues with respect to variations in a fixed conformal class, but already harder if one allows variations among all possible metrics. A natural question is whether there exists a good parameter space containing all Laplace-Beltrami operators, e.g., elliptic differential operators of degree $2$, where all eigenvalues satisfy the strong Arnold hypothesis.

\begin{question}
\label{question:geometric meaning}
    What is the geometric meaning of the degeneracy condition?
\end{question}
As briefly mentioned in the introduction, the (conformal) degeneracy condition looks very similar to the one obtained by El Soufi and Ilias in \cite{ElSoufi2008} characterizing metrics which extremize the $k$-th eigenvalue functional. They show that a metric $g$ extremizes the eigenvalue functional $\lambda_k$ if there exists a basis of eigenfunctions $\{u_j\}_{j =1}^{m}$ associated the same eigenvalue and coefficients $\epsilon_j$ valued in $\{0,1\}$ satisfying
\begin{align}
\label{equation:extremality}
    \sum_{j} \epsilon_j \, du_j \! \otimes \! du_j = g
\end{align}
and the converse is true provided one considers the greatest or smallest of the eigenvalue functionals coinciding at $g$. The analogous extremality condition in a conformal class is
\begin{align*}
    \sum_{j} \epsilon_j u_j^2 = 1
\end{align*}
Using a result of Takahashi \cite{Takahashi1966} one can reinterpret the extremality condition as saying that the eigenfunctions $\{u_j\}$ give a minimal isometric immersion of $(M,g)$ into the sphere $\mathbb S^{d-1} (\sqrt{\frac{n}{|\lambda_k|}})$, where $d$ is the number of nonzero coefficients $\epsilon_j$. Conversely, every such minimal immersion gives rise to an extremal metric.

It would be interesting to see whether the nondegeneracy condition admits a similar interpretation, perhaps as a map with special geometric properties into the null cone of a semi-Riemannian $\mathbb{R}^{s,t}$.

We would also like to remark that, exactly as in the extremality condition, the condition on the square sum of the eigenfunctions in the definition of an unstable eigenvalue is in fact implied by the condition on the square sum of their gradients (\Cref{proposition: further reduction}).

\begin{question}
    What is the topology of the set of metrics which exhibit at least one eigenvalue of multiplicity $m$? Is it connected?
\end{question}

%To answer this question, one must presumably combine \Cref{theorem:transverse structure theorem} with a new global argument, possibly a generalized non-crossing rule conditioned to preserve the multiplicity of a given eigenvalue.

\appendix

\section{Cofinite Fr\'{e}chet submanifolds}
\label{section: appendix 0}

Our first appendix establishes the ``cofinite'' version of the implicit function theorem used throughout the text. It is analogous to \cite[Corollary 5.5]{Freyn2015}, but has a significantly simpler proof. Recall the definition of a $C^1$ map between Fr\'{e}chet spaces \cite[Definition 3.1.1.]{Hamilton1982}: A map $F: \mathcal F \supseteq \mathcal U \to \mathcal G$ is called \emph{continuously differentiable}, or just $C^1$, if 
\begin{align*}
    DF(x)u = \lim_{t\to 0} \frac{F(x + tu) - F(x)}{t}
\end{align*}
exists for all $x \in \mathcal U$, $u \in \mathcal F$, and furthermore, $DF(x)u$ is linear in $u$ and \emph{jointly continuous} in $x$ and $u$.

\begin{lemma}
\label{lemma: appendix: cofinite implicit function theorem}
    Let $\mathcal U \subseteq \mathcal F$ and $V \subseteq \mathbb R^m$ be open neighborhoods of the origin in a Fr\'{e}chet space $\mathcal F$ and in $\mathbb R^m$, respectively. Suppose $F:\mathcal U \times V \to \mathbb R^m$ is $C^1$ in the sense introduced above, $F(0,0) = 0$, and $D_2 F(0,0)$ is invertible. Then there exists a neighborhood $\mathcal U' \times V' \subseteq \mathcal U \times V$ of $(0,0)$ and a $C^1$ function $G:\mathcal U' \to \mathbb R^m$ such that for all $(x,y) \in \mathcal U' \times V'$, we have $F(x,y) = 0$ if and only if $y = G(x)$.
\end{lemma}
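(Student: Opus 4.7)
The plan is to reduce to the Banach fixed point theorem, exploiting the fact that the codomain of $F$ is finite dimensional. I would first set $T_x(y) = y - D_2F(0,0)^{-1} F(x,y)$, whose fixed points in $y$ coincide exactly with the zeros of $F(x,\cdot)$. Its $y$-derivative, $D_2 T_x(y) = \mathrm{id} - D_2 F(0,0)^{-1} D_2 F(x,y)$, vanishes at $(0,0)$, and the joint continuity of $D_2 F$ built into Hamilton's $C^1$ definition means this operator has norm at most $\tfrac12$ on a sufficiently small product neighborhood $\mathcal U' \times \overline{B_\delta(0)}$. After shrinking $\mathcal U'$ so that $\|T_x(0)\| \leq \delta/2$, each $T_x$ is a $\tfrac12$-contraction of $\overline{B_\delta(0)}$ into itself, and the Banach fixed point theorem produces a unique fixed point $G(x) \in \overline{B_\delta(0)}$. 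Setting $V' = B_\delta(0)$ and using uniqueness of the fixed point in the closed ball delivers the equivalence $F(x,y) = 0 \Leftrightarrow y = G(x)$. Continuity of $G$ then follows from the standard uniform-contraction estimate $\|G(x) - G(x')\| \leq 2 \|T_x(G(x')) - T_{x'}(G(x'))\|$ together with the continuity of $F$.

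Next, I would verify that $G$ is $C^1$ in Hamilton's sense by computing the Gateaux derivative directly. Writing $0 = F(x+th,G(x+th)) - F(x,G(x))$ and splitting the right-hand side as
\begin{equation*}
\bigl[F(x+th,G(x+th)) - F(x+th,G(x))\bigr] + \bigl[F(x+th,G(x)) - F(x,G(x))\bigr],
\end{equation*}
the first bracket equals $A(t) \bigl( G(x+th) - G(x) \bigr)$ for the matrix $A(t) = \int_0^1 D_2 F\bigl(x+th, G(x) + s(G(x+th) - G(x))\bigr) \, ds$, by the fundamental theorem of calculus in the finite-dimensional variable $y$. Joint continuity of $D_2 F$ and continuity of $G$ force $A(t) \to D_2 F(x,G(x))$, which is invertible near the origin. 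Dividing by $t$ and letting $t \to 0$, the second bracket contributes $D_1 F(x,G(x))[h]$ by definition of the Gateaux derivative, so
\begin{equation*}
DG(x)[h] = -D_2 F(x, G(x))^{-1} D_1 F(x,G(x))[h].
\end{equation*}
This expression is manifestly linear in $h$ and jointly continuous in $(x,h)$, since matrix inversion is continuous in finite dimensions and $D_1 F$, $D_2 F$ are jointly continuous by hypothesis.

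The subtle point — and the reason the proof is genuinely simpler than the Banach-manifold version cited from Freyn — is that Hamilton's notion of $C^1$ does not a priori provide norm continuity of the assignment $x \mapsto D_2 F(x,\cdot)$ as an operator-valued map, which is what a standard Banach-space implicit function theorem would demand. Here, however, $D_2 F(x,y)$ is just an $m \times m$ real matrix whose entries are jointly continuous scalar functions of $(x,y)$, so joint continuity automatically suffices for every estimate above. This ``cofinite'' feature is precisely what allows one to bypass the Nash--Moser machinery otherwise needed to invert maps between Fr\'{e}chet spaces.
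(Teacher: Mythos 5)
Your argument mirrors the paper's: the same contraction $T_x(y) = y - D_2F(0,0)^{-1}F(x,y)$, the same exploitation of the finite-dimensional codomain to upgrade joint continuity of $D_2F$ to operator-norm continuity, and the Banach fixed-point theorem, followed by a direct Gateaux-derivative computation where the paper instead invokes the finite-dimensional implicit function theorem on the slice $\langle x,u\rangle\times\mathbb{R}^m$ (an equivalent argument, though yours is slightly more self-contained). In fact your treatment of the self-mapping condition---shrinking $\mathcal{U}'$ so that $\|T_x(0)\|\le\delta/2$---is more careful than the paper's, which rewrites $R(x,y)$ as $\bigl(\int_0^1(\mathrm{id}-D_2F(0,0)^{-1}D_2F(x,ty))\,dt\bigr)y$ and deduces $R(\mathcal U'\times V')\subseteq V'$ from the contraction estimate alone; that identity silently drops the term $-D_2F(0,0)^{-1}F(x,0)$, which is nonzero away from $x=0$, so the extra shrinking you perform is genuinely needed.
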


\begin{proof}
    This is a matter of checking that the usual fixed point iteration works under the present, weaker than usual, assumptions on $\mathcal F$. Define $R: \mathcal U \times V \to \mathbb R^m$, 
    \begin{align*}
        R(x,y) = y - D_2 F(0,0)^{-1} F(x,y).
    \end{align*}
    Clearly, $F(x,y) = 0$ if and only if $R(x,y) = y$. We can rewrite
    \begin{align*}
        R(x,y) = \left( \int_0^1 \left( \mathrm{id}_{\mathbb R^m} - D_2 F(0,0)^{-1} D_2 F(x,ty) \right) dt \right) y
    \end{align*}
    By assumption, the expression $D_2 F(x,y) h$ is jointly continuous in $x,y$ and $h$. Since we assumed $V$ is \emph{finite-dimensional}, this means that the operator $D_2 F(x,y):\mathbb R^m \to \mathbb R^m$ is continuous in $x,y$ with respect to the norm topology. Thus, there exists a neighborhood $\mathcal U' \times V' \subseteq \mathcal U \times V$ of $(0,0)$ such that 
    \begin{align*}
        \lVert \mathrm{id}_{\mathbb R^m} - D_2 F(0,0)^{-1} D_2 F(x,y) \rVert < \frac{1}{2}
    \end{align*}
    for all $x,y \in \mathcal U' \times V'$, and hence $R(\mathcal U' \times V') \subseteq V'$. Analogously, we have that $R(x,y)$ is a contraction on $V'$ for any fixed $x \in \mathcal U'$. Indeed, $R(x,y_1)-R(x,y_2)$
    \begin{align*}
        = &\left( \int_0^1 \left( \mathrm{id}_{\mathbb R^m} - D_2 F(0,0)^{-1} D_2 F(x,(1-t)y_1 + ty_2) \right) dt \right) (y_2-y_1).
    \end{align*}
    Thus, given $x \in \mathcal U'$, there is a unique solution $y \in V'$ to the equation $F(x,y) = 0$. 

    To prove that $F$ admits all directional derivatives is straightforward. Let $x \in \mathcal U'$, and $u \in \mathcal F$. Then the finite dimensional implicit function theorem, applied to the restriction of $F$ to $\left\langle x, u \right\rangle \times \mathbb R^m$ shows that $DG(x)[u] = \lim_{t\to 0} \frac{G(x+tu) - G(x)}{t}$ exists and
    \begin{align*}
        DG(x)[u] = D_yF(x,G(x))^{-1} D_x F(x,G(x)) u.
    \end{align*}
    This expression is clearly continuous in $x$ and $u$, and hence, $G$ is $C^1$.
\end{proof}

\begin{proposition}
    \label{proposition: appendix: submersion}
    Let $\mathcal U \subseteq \mathcal F$ be an open subset of a Fr\'{e}chet space $\mathcal F$. Suppose a $C^1$ map $F: \mathcal U \to \mathbb R^m$ is such that $DF(p): \mathcal F \to \mathbb R^m$ is surjective at a point $p \in \mathcal U \cap F^{-1}(0)$. Then there exists a neighborhood $\mathcal U' \subseteq \mathcal U$ of $p$, an open neighborhood $\mathcal V \subseteq \mathcal F'$ of the origin in a Fr\'{e}chet space $\mathcal F'$, and a $C^1$ map $G:\mathcal V \to \mathcal U'$ such that $F^{-1}(0) \cap \mathcal U'= G(\mathcal V)$. If $\mathcal F$ was equipped with a grading, $G$ is furthermore tame.
\end{proposition}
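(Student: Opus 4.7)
The plan is to reduce the statement to the cofinite implicit function theorem (\Cref{lemma: appendix: cofinite implicit function theorem}) by choosing an appropriate finite-dimensional complement of $\ker DF(p)$ in $\mathcal F$.

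First I would set $\mathcal F' := \ker DF(p)$, which is closed because $DF(p)$ is continuous, and is therefore a Fréchet space in its own right. Since $DF(p): \mathcal F \to \mathbb R^m$ is surjective, a linear right inverse $\iota: \mathbb R^m \to \mathcal F$ exists (choose any preimage of each basis vector of $\mathbb R^m$ under $DF(p)$); set $V := \iota(\mathbb R^m)$. The map $\mathcal F' \oplus V \to \mathcal F$, $(x, v) \mapsto x + v$, is a continuous linear bijection whose inverse $y \mapsto \bigl( y - \iota(DF(p)y), \, \iota(DF(p)y) \bigr)$ is likewise continuous, so this is in fact a topological direct sum decomposition.

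Next, define $\tilde F(x, v) := F(p + x + v)$ on a neighborhood of the origin in $\mathcal F' \times V \cong \mathcal F' \times \mathbb R^m$. This is $C^1$, vanishes at $(0, 0)$, and satisfies $D_2 \tilde F(0, 0) = DF(p) \circ \iota = \mathrm{id}_{\mathbb R^m}$, which is invertible. \Cref{lemma: appendix: cofinite implicit function theorem} then produces neighborhoods $\mathcal V \subseteq \mathcal F'$ and $V' \subseteq V$ of the origin together with a $C^1$ function $G: \mathcal V \to V$ such that $\tilde F(x, v) = 0$ iff $v = G(x)$ for $(x, v) \in \mathcal V \times V'$. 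Setting $\Phi(x) := p + x + G(x)$ produces the desired parametrization, and the local neighborhood is $\mathcal U' := (p + \mathcal V + V') \cap \mathcal U$; a direct check shows $F^{-1}(0) \cap \mathcal U' = \Phi(\mathcal V)$.

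The tameness statement, in Hamilton's sense, is essentially automatic: since $V \cong \mathbb R^m$ is finite dimensional, any grading on $\mathcal F$ restricts to a grading on $V$ whose seminorms are mutually equivalent, so $G: \mathcal V \to V \subseteq \mathcal F$ is tame for trivial reasons. Combined with tameness of the inclusion $\mathcal F' \hookrightarrow \mathcal F$, this makes $\Phi$ tame. I expect no real obstacle, as the heavy lifting is done in \Cref{lemma: appendix: cofinite implicit function theorem}; what remains is bookkeeping, with the only subtle point being the verification that the algebraic splitting $\mathcal F = \mathcal F' \oplus V$ is genuinely topological, which follows from closedness of $\ker DF(p)$ and finite-dimensionality of $V$.
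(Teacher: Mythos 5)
Your proposal is correct and follows essentially the same route as the paper's own proof: split $\mathcal F = \ker DF(p) \oplus V$ with $V$ a finite-dimensional complement, reduce to \Cref{lemma: appendix: cofinite implicit function theorem} applied to $\tilde F(x,v) = F(p+x+v)$, and observe that the resulting parametrization $\Phi(x) = p + x + G(x)$ is a sum of an isometric embedding and a finite-rank map, hence tame. The only cosmetic difference is that you normalize $D_2\tilde F(0,0) = \mathrm{id}_{\mathbb R^m}$ by taking $\iota$ to be a genuine right inverse and spell out the continuity of the projection onto $\mathcal F'$, which the paper leaves implicit.
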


\begin{proof}
    Since $DF(p)$ is onto, we may choose $v_1,\ldots,v_m \in \mathcal F$ such that $\{DF(p)v_j\}_{j=1}^m$ form a basis of $\mathbb R^m$. By continuity of $q \mapsto DF(q) v_j$, there exists a neighborhood $\mathcal U_1$ of $p$ such that $DF(q)v_1,\ldots, DF(q)v_m$ are linearly independent for all $q \in \mathcal U_1$. Note that $\mathcal F' := \ker DF(p)$ is a closed complement to $V := \left\langle v_1,\ldots,v_m \right\rangle$. Let $\mathcal V_1 \subseteq \ker DF(p)$ and $U_1 \subseteq V$ be open neighborhoods of the origin with $p + \mathcal V_1 \oplus U_1 \subseteq \mathcal U_1$. The map $\tilde F: \mathcal V_1 \times U_1 \to \mathbb R^m$, $\tilde F(x,y) = F(p + x + y)$ satisfies the hypothesis of \Cref{lemma: appendix: cofinite implicit function theorem}, which yields a neighborhood $\mathcal V \oplus U \subseteq \mathcal V_1 \oplus U_1$ of the origin and a $C^1$ map $\tilde G:\mathcal V \to U$ such that $F^{-1}(0) \cap (p + \mathcal V \oplus U)$ is parametrized by $G: \mathcal V \to \mathcal F$, 
    \begin{align*}
        G(x) = p + x + \tilde G(x).
    \end{align*}
    The first claim follows after we denote $\mathcal U' := p + \mathcal V \oplus U$.
    
    The map $G$ is tame with respect to any grading on $\mathcal F$ (and the induced grading on $\mathcal F'$), since it is the sum of an isometric embedding and a map of finite rank.
\end{proof}

A statement akin to \Cref{proposition: appendix: submersion} for submersions valued in a Banach space is obtained in \cite{Freyn2015}, albeit with the added complication that surjectivity of $DF$ at a single point no longer implies $F$ is a submersion. This theorem implies \Cref{proposition: appendix: submersion} immediately (Corollary 5.5 in \cite{Freyn2015}), but its proof invokes the Nash-Moser implicit function theorem. We found it noteworthy that cofinite Fr\'{e}chet submanifolds can be parametrized using the standard implicit function theorem.

\section{Transversality theory in finite codimension}
\label{section: appendix 1}
The aim of this appendix is to provide the necessary transversality-theoretic background needed throughout the paper.

\begin{definition}
A subset $\mathcal{D}$ of a Fr\'echet manifold $\mathcal{X}$ is said to have \tcodim \ $k$ if for any manifold $N$ of dimension less than $k$, the set of maps in $C^\infty (N,\mathcal{X})$ that avoids $\mathcal{D}$ is residual.
\end{definition}
It is clear from the definition that countable unions of sets of \tcodim \ $k_i$ have \tcodim \ $\min\limits_{i} k_i$.\\
Before we provide a nontrivial example of a set of \tcodim \ $k$ we will first need to prove a variant of the parametric transversality theorem. Due to the fact that we consider finite codimension manifolds only, its proof is essentially the same as in finite dimensions (see for example \cite{GuilleminPollack}), but for completeness we will include it here.

\begin{lemma}{[Parametric Transversality]}
\label{lemma: appendix: parametric transversality}
    Let $N$ be a smooth compact manifold of dimension $n$, $\mathcal{X}$ a Fr\'echet manifold, and $Z\subset \mathcal{X}$ the level set of a $C^1$ submersion $\phi:\mathcal{X}\to \mathbb{R}^k$, where $n\leq k$. Given a finite dimensional manifold $S$ of dimension $s$ and a smooth map $F:N\times S \to \mathcal{X}$ such that both $F$ and $\partial F$ are transverse to $Z$, $F_s$ and $\partial F_s$ are transverse to $Z$ for generic $s$. 
\end{lemma}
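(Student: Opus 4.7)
The plan is to reduce this to classical finite-dimensional parametric transversality by composing with the defining submersion $\phi$. Writing $Z = \phi^{-1}(c)$ for the appropriate value $c \in \mathbb{R}^k$, and using that $\phi$ is a submersion, transversality of any smooth map $H \colon Y \to \mathcal{X}$ to $Z$ is equivalent to $c$ being a regular value of the composition $\phi \circ H \colon Y \to \mathbb{R}^k$. Applied to $H = F$ and to $H = \partial F$, the hypothesis yields that $c$ is simultaneously a regular value of $\phi \circ F$ and of its restriction to $\partial N \times S$.

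The first concrete step I would carry out is to invoke the preimage theorem for manifolds with boundary (applied in the finite-dimensional setting, to the smooth map $\phi \circ F \colon N \times S \to \mathbb{R}^k$) to conclude that
\begin{align*}
    W := (\phi \circ F)^{-1}(c) \subseteq N \times S
\end{align*}
is a smooth submanifold of codimension $k$, with boundary $\partial W = W \cap (\partial N \times S)$. No infinite-dimensional analysis enters this step, because $\phi$ has finite-dimensional codomain; the Fr\'echet manifold $\mathcal{X}$ only contributes through $Z$, which has finite codimension $k$.

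Next, I would consider the smooth projection $\pi \colon W \to S$ and its restriction $\pi|_{\partial W} \colon \partial W \to S$, both between finite-dimensional manifolds. By Sard's theorem, the set of $s \in S$ that is a regular value of both $\pi$ and $\pi|_{\partial W}$ is residual in $S$ (indeed, its complement has Lebesgue measure zero). For any such $s$, a direct linear-algebra check - identical to the one in \cite{GuilleminPollack} - shows that $F_s$ and $\partial F_s$ are transverse to $Z$: at a point $(x,s) \in W$ one has $T_{(x,s)} W = \ker D(\phi \circ F)(x,s)$, and surjectivity of $D\pi(x,s)|_{T_{(x,s)}W}$ translates directly into the identity $DF_s(x)(T_x N) + T_{F_s(x)} Z = T_{F_s(x)} \mathcal{X}$.

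The key structural point, and the reason the argument goes through so cleanly, is that the cofinite assumption on $Z$ keeps the entire transversality argument inside ordinary finite-dimensional differential topology: one never has to invoke Sard--Smale or any Nash--Moser machinery. The only thing one must be slightly careful about is choosing the correct ambient (finite-dimensional) manifold $W$ on which to apply Sard, and checking that the linear-algebra criterion for transversality transfers from $\phi \circ F$ back to $F$; both of these are routine given that $\phi$ is a submersion.
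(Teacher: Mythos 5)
Your proposal takes essentially the same route as the paper: build $W = F^{-1}(Z) = (\phi\circ F)^{-1}(c)$, observe it is a finite-dimensional manifold of codimension $k$ in $N\times S$, apply Sard to the projection $\pi\colon W \to S$ (and to its restriction to $\partial W$), and translate regularity of a value $s$ into transversality of $F_s$ by the standard linear-algebra argument from Guillemin--Pollack. The paper simply phrases the first step via the chain rule without explicitly composing with $\phi$, but the content is identical.

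One small but real inaccuracy in your write-up: you describe $\phi\circ F$ as a \emph{smooth} map, but $\phi$ is only assumed $C^1$, so $\phi\circ F$ and hence $W$ are merely $C^1$. This is precisely why the hypothesis $n\leq k$ cannot be dropped: you need Sard's theorem for $C^1$ maps, which holds only when the dimension of the source is at most that of the target. Here $\dim W = n + s - k$ and $\dim S = s$, so $C^1$-Sard applies exactly when $n\leq k$. The paper makes this point explicitly; your version should too, since otherwise the role of the dimension constraint in the statement is unexplained.
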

\begin{proof}
    Using the chain rule and that $\phi$ is a submersion we see that $W:=F^{-1}(Z)$ is a $C^1$ submanifold of $N\times S$ with boundary $\partial W=W\cap \partial(N \times S)$. Denote by $\pi:N\times S\to S$ the projection to the second factor. The conclusion of this Lemma will follow by an application of Sard's theorem once we prove that $s\in S$ being a regular value for the projection $\pi$ restricted to $W$ and $\partial W$ implies that $F_s\transv Z$ and $\partial F_s\transv Z$, respectively. Sard's theorem for $C^1$ maps holds provided the dimension of the source is less than or equal to the dimension of the target, which is where the dimensional restriction in the statement of the Lemma comes from.\\
    Assume now that $s$ is a regular value of $\pi$ restricted to $W$ (the boundary case works in exactly the same way). For $x\in W$, denote $F_s (x)=z\in Z$. Transversality of $F$ implies that
    \begin{align*}
        \mathrm{Im}(T F)_{(x,s)}+T_{z} Z=T_z \mathcal{X}
    \end{align*}
This means that for any $a\in T_z \mathcal{X}$ there exists $(b,c)\in T_{(x,s)} \big(N\times S\big)$ such that $(T F)_{(x,s)} (b,c)=a$. We need to find $v\in T_s S$ such that $F_s (v)=a+d$ for some $d\in T_z Z$. To this end note that the fact that $s$ is regular guarantees the existence of a $u\in T_{(x,s)} W$ so that $T \pi_s (u)=c$. Noting that $(T F)(u,e)\in T_z Z$ we set $v=b-u$ and compute
\begin{align*}
    (T F_s)_{(x,s)} (v)= (T F)_{(x,s)} (v,e)=a-(T F)(u,e)
\end{align*}
    and so $(T F_s)$ hits $a$ up to an element in $T_z Z$ and we are done.
\end{proof}
\begin{remark}
\begin{enumerate}
    \item For the purposes of proving a maximal non-crossing rule, the above Lemma is sufficient. If one wishes to control generic intersections of images of smooth maps from $N$ to $\mathcal{X}$ with $Z$ when $n>k$ via Sard's theorem one would have to prove smooth versions of Proposition \ref{proposition: appendix: submersion} and Lemma \ref{lemma: appendix: C1 dependence of laplacian}.
    \item The above proof clearly works when $M$ has corners.
    \item It also works if we prescribe the boundary $\partial M$ and only vary $F$ on the interior of $M$.
\end{enumerate}  
\end{remark}
The following Lemma tells us that \tcodim \ is a local concept in separable Fr\'{e}chet spaces.
\begin{lemma}
\label{lemma: appendix: local codimension implies global codimension}
Let $\mathcal{D}$ be a subset of a separable Fr\'{e}chet manifold $\mathcal{X}$. If for all $x\in \mathcal D$ there exists an open set $U_x \subset \mathcal{X}$ such that $\mathcal{D}\cap U_x$ has \tcodim \ $k$ in $\mathcal{X}$, then $\mathcal{D}$ has \tcodim \ $k$ in $\mathcal{X}$. 
\end{lemma}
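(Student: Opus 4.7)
The plan is to pass from the local to the global statement using separability. Since $\mathcal X$ is a separable metrizable space, the subspace $\mathcal D$ (with the subspace topology) is Lindelöf. Hence the open cover $\{U_x \cap \mathcal D\}_{x \in \mathcal D}$ admits a countable subcover, which yields a sequence $\{U_i\}_{i \in \mathbb N}$ of open subsets of $\mathcal X$ with $\mathcal D \subseteq \bigcup_{i \in \mathbb N} U_i$ and such that each $\mathcal D \cap U_i$ has transversality codimension $k$ in $\mathcal X$.

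Now fix a smooth manifold $N$ with $\dim N < k$. For each $i \in \mathbb N$, let
\[
R_i \ = \ \bigl\{f \in C^\infty(N, \mathcal X) \ : \ f(N) \cap (\mathcal D \cap U_i) = \varnothing \bigr\};
\]
by hypothesis, each $R_i$ is residual in $C^\infty(N, \mathcal X)$. Because $\mathcal D \subseteq \bigcup_i U_i$, a map $f \in C^\infty(N, \mathcal X)$ avoids $\mathcal D$ if and only if it avoids $\mathcal D \cap U_i$ for every $i$, so the set of maps avoiding $\mathcal D$ is precisely $\bigcap_{i \in \mathbb N} R_i$. A countable intersection of residual sets is residual (a residual set contains a countable intersection of dense open sets, and a countable intersection of such is again of this form), so $\mathcal D$ has transversality codimension $k$.

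The only genuine content of the argument is the reduction from the uncountable family $\{U_x\}_{x \in \mathcal D}$ to a countable one, and this is exactly where separability of $\mathcal X$ (via the Lindelöf property of $\mathcal D$) is used; the remainder is purely formal manipulation of the definition of residual subsets. No anticipated obstacle arises, as the hypothesis is already formulated so that each $\mathcal D \cap U_i$ has transversality codimension $k$ in the ambient space $\mathcal X$ itself, avoiding any need to compare residuality in different mapping spaces.
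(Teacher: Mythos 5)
Your proof is correct and follows essentially the same approach as the paper: use separability (via the Lindelöf property) to extract a countable subcover, then observe that the set of maps avoiding $\mathcal D$ is the countable intersection of the residual sets of maps avoiding each $\mathcal D \cap U_i$, hence itself residual. The paper phrases this last step by appealing to the earlier-stated fact that countable unions of sets of transversality codimension $k$ again have transversality codimension $k$, which is exactly the observation you spell out.
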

\begin{proof}
    The union $\mathcal{O}=\bigcup_{x\in \mathcal D} U_x$ is an open set in $\mathcal X$ and therefore itself a separable Fr\'{e}chet manifold, so one may extract a countable subcover $\{U_{x_n}\}$ of $\mathcal{O}$. We conclude by noting that $\mathcal{D}$ is the countable union of the sets $U_n \cap \mathcal{D}$ all of which have \tcodim \ $k$.
\end{proof}

\begin{lemma}
\label{lemma: appendix: local transversality}
Let $\mathcal{X}$ be an open subset of a separable Fr\'echet space. Let $\mathcal{D}\subset \mathcal{X}$ be a subset for which there exist an open cover $(U_i)_{i\in I}$ and submersions $\phi_j:U_i\to \mathbb{R}^k$ with the property that, for each $p \in \mathcal D$, there is a neighborhood $V$ of $p$ and a countable set $J \subseteq I$ with $\mathcal D \cap V \subseteq \bigcup_{j \in J} \phi_j^{-1}(\{0\})$. Then $\mathcal{D}$ has \tcodim \ $k$. 
\end{lemma}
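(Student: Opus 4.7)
The plan is to reduce to a countable family of codimension-$k$ cofinite Fr\'echet submanifolds via separability, and then obtain residual avoidance via the parametric transversality lemma (\Cref{lemma: appendix: parametric transversality}) together with a Baire category argument. Since $\mathcal{X}$ is open in a separable Fr\'echet space it is Lindel\"of; covering $\mathcal{D}$ by countably many of the local neighborhoods $V_p$ furnished by the hypothesis and taking the union of the associated countable index sets $J_p$ yields a countable $J \subseteq I$ with $\mathcal{D} \subseteq \bigcup_{j \in J} Z_j$, where $Z_j := \phi_j^{-1}(0) \subseteq U_j$ is a cofinite Fr\'echet submanifold of codimension $k$ by \Cref{proposition: appendix: submersion}.

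Now fix a smooth manifold $N$ with $n := \dim N < k$ and equip $C^\infty(N, \mathcal{X})$ with the strong Whitney $C^\infty$-topology, under which it is a Baire space. For each $j \in J$ set
\[
A_j := \{ f \in C^\infty(N, \mathcal{X}) : f \transv Z_j \}.
\]
Since $n < k = \codim Z_j$, any $f \in A_j$ automatically satisfies $f(N) \cap Z_j = \varnothing$. Openness of $A_j$ follows from a standard compact-exhaustion argument, since on each compact subset $K$ of $f^{-1}(U_j)$ the continuous function $\phi_j \circ f$ is bounded away from $0$, a property stable under small strong-$C^\infty$ perturbations of $f$. For density, given any $f \in C^\infty(N, \mathcal{X})$, cover a compact exhaustion of $N$ by finitely many coordinate patches $N_\alpha$ on which $f$ maps into a single chart of $\mathcal{X}$, select directions $v_{\alpha, 1}, \ldots, v_{\alpha, k}$ in the model Fr\'echet space so that $d\phi_j(f(x))[v_{\alpha, i}]$ spans $\mathbb R^k$ at each relevant point (possible since $\phi_j$ is a submersion), and, via a subordinate partition of unity $\{\rho_\alpha\}$, form the finite-dimensional family
\[
F(x, s) = f(x) + \sum_\alpha \rho_\alpha(x) \sum_{i=1}^k s_{\alpha, i} v_{\alpha, i}
\]
(expressed in the relevant charts). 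By construction $\phi_j \circ F$ has surjective $s$-derivative wherever it vanishes, so $F \transv Z_j$, and \Cref{lemma: appendix: parametric transversality} delivers arbitrarily small $s$ with $F_s \transv Z_j$, producing maps in $A_j$ arbitrarily close to $f$.

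The Baire category theorem then identifies $\bigcap_{j \in J} A_j$ as a residual subset of $C^\infty(N, \mathcal{X})$ whose elements avoid $\bigcup_{j \in J} Z_j \supseteq \mathcal{D}$, completing the proof. The main technical obstacle is assembling the perturbation family $F$ in the Fr\'echet setting: one needs smooth partitions of unity on $N$, a coherent choice of chart structure on $\mathcal{X}$ near $f(K)$ for compact $K \subseteq N$ so that the translations $f(x) \mapsto f(x) + \sum s_{\alpha, i} v_{\alpha, i}$ have global meaning, and enough perturbation directions to saturate the conormal of $Z_j$ uniformly along the patch. A secondary subtlety is the non-compactness of $N$, handled by exhausting $N$ by compact subsets $K_m$, applying the density argument on each $K_m$, and taking a diagonal limit in the strong Whitney topology to maintain residuality globally.
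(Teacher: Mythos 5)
Your proposal follows the same broad strategy as the paper (parametric transversality plus a Baire category argument), but there is a genuine gap in the openness step. You set $A_j := \{f \in C^\infty(N,\mathcal X) : f \transv Z_j\}$, where $Z_j = \phi_j^{-1}(\{0\}) \subseteq U_j$, and claim $A_j$ is open by noting that $\phi_j \circ f$ is bounded away from $0$ on compacta of $f^{-1}(U_j)$. This does not give openness: $Z_j$ is closed in the chart domain $U_j$ but \emph{not} in all of $\mathcal X$, so $f(N)$ can pass through points of $\overline{Z_j}\setminus U_j$ (limits of $Z_j$ escaping the chart), and arbitrarily small perturbations of such an $f$ can land in $Z_j$ even though $f$ itself avoids it. Bounding $\phi_j\circ f$ away from $0$ on compact subsets of the \emph{open} set $f^{-1}(U_j)$ says nothing about what happens as one approaches $\partial f^{-1}(U_j)$; the quantity can degenerate to $0$ there precisely because $f$ exits the chart.

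The paper circumvents this by refining the localization before proving openness: around each $z\in \phi_j^{-1}(\{0\})$ it takes nested chart neighborhoods $V_z \subseteq B_z$ with $\overline{V_z \cap \phi_j^{-1}(\{0\})} \subset B_z$, so that $\overline{V_z\cap\phi_j^{-1}(\{0\})}$ is a \emph{closed} subset of $\mathcal X$ (contained in the chart and hence in $\phi_j^{-1}(\{0\})$). Avoiding a closed set is an open condition for compact domain $N$, and density is established by the same kind of translation family $F_z(x,s)=f(x)+s_i h^i(f(x))$ you use, only constructed locally. The pieces $V_z\cap\phi_j^{-1}(\{0\})$ are then glued via \Cref{lemma: appendix: local codimension implies global codimension}. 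If you want to retain your more global formulation, you need an analogous further localization of $Z_j$ (or an a priori argument that $f(N)$ can be pushed off $\overline{Z_j}\setminus U_j$), otherwise the Baire step does not go through as written. Your density argument — covering $N$ by patches and assembling the perturbation family with a partition of unity — is a reasonable global reformulation of the paper's pointwise construction and is not where the problem lies.
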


\begin{proof}
We need to prove that, for any manifold $N$ of dimension less that $k$, the set of smooth maps $C^\infty (N,\mathcal{X})$ from $N$ to $\mathcal{X}$ which avoids $\mathcal{D}$ is residual. Assume without loss of generality that $N$ is compact, as the general case follows by a compact exhaustion argument. We will show that a generic set in $C^\infty (N,\mathcal{X})$ avoids $\phi_j^{-1} (\{0\})$ for any $j$. The strategy is to show that every point $z\in \phi_j^{-1} (\{0\})$ has a neighbourhood $V_z$ so that $V_z \cap \phi_j^{-1} (\{0\})$ has \tcodim \ $k$. The claim then follows by an application of Lemma \ref{lemma: appendix: local codimension implies global codimension}. Let $B_z$ be some neighbourhood  of $z\in \phi_j^{-1} (\{0\})$ and $f\in C^\infty (N,\mathcal{X})$ which is not transverse to $B_z\cap \phi_j^{-1}(\{0\})$ at the point $z$. We define $F_z : M\times \mathbb{R}^k\to \mathcal{X}$ by $F_z (x,s)=f(x)+s_i h^i (f(x))$ where $f(x)=z$ and $h^i (f(x))$ a basis of a subspace $K_x \transv \ker(T\phi_j)$. After potentially restricting $B_z$ we may assume that $F_z$ is transverse to $B_z \cap \phi_j^{-1}(\{0\})$. The parametric transversality Lemma \ref{lemma: appendix: parametric transversality} now implies that the set of $s$ such that $F_s$ is transverse to $B_z \cap \mathcal{D}$ is residual which implies density in $\mathbb{R}^k$, meaning that $f$ can be approximated arbitrarily well in $C^\infty$ with smooth functions that are transverse to $B_z \cap \phi_j^{-1}(\{0\})$. 

Now pick an open subset $V_z$ of $B_z$ satisfying that $\overline{V_z \cap \phi_j^{-1} (\{0\})}\subset B_z$. Clearly, the set of maps $C^\infty (N,\mathcal{X})$ which avoids $\overline{V_z \cap \phi_j^{-1} (\{0\})}$ is open, and the above argument shows that it is dense as well. Since subsets of sets of \tcodim \ $k$ are themselves of \tcodim \ $k$, we have proven that $\phi_j^{-1}(\{0\})$ has \tcodim \ $k$. 

This immediately implies that $\mathcal{D}\cap V$ has \tcodim \ $k$, and another application of Lemma \ref{lemma: appendix: local codimension implies global codimension} allows us to conclude that $\mathcal{D}$ itself has \tcodim \ $k$.
\end{proof}

\begin{remark}
    The Fr\'echet manifold of smooth metrics is an open subset of the Fr\'echet space of smooth $(0,2)$-tensor fields, and so the setting of Lemma \ref{lemma: appendix: local transversality} is good enough for the purposes of \Cref{theorem:nonCrossing}. A more general setting in which one can run a similar argument is the class of separable Fr\'echet manifolds that admit smooth bump functions, i.e., those modelled on $C^\infty$-regular separable Fr\'echet spaces, see \cite[Chapter $3$, Subsection $14$]{MichorKrieglConvenientSetting}. In this case, one can define $F$ as above in a chart and extend it to a globally defined map via a bump function.
\end{remark}

\section{Continuous differentiability in the metric of the Laplacian and its spectral projections}
\label{section: appendix 2}

\begin{lemma}\label{lemma: appendix: C1 dependence of laplacian}
    Let $M$ be a closed, smooth manifold, and $\mathcal G(M)$ the Fr\'{e}chet manifold of smooth Riemannian metrics on $M$.
    \begin{enumerate}
        \item The family $\Delta: \mathcal G(M) \rightarrow \mathcal L(H^2(M),L^2(M))$ of Laplace-Beltrami operators is continuously differentiable. In local coordinates,
        \begin{align}
            D\Delta(g)[h] u = \tfrac12 g^{jk} \partial_j \tr_gh \hspace{1pt} \partial_k u - |g|^{-\frac12} \partial_j (|g|^{\frac12} h^{jk} \partial_k u).
        \end{align}
        \item Fix $g \in \mathcal G(M)$ and $\lambda \in \rho(\Delta_g)$. Then there exists a neighborhood $\mathcal U \subseteq  \mathcal G(M)$ of $g$ and $\varepsilon > 0$ such that $(z - \Delta)^{-1}: \mathbb B_\varepsilon(\lambda) \times \mathcal G(M) \rightarrow \mathcal L(L^2(M),H^2(M))$ is defined, and continuously differentiable.
        \item Fix $g \in \mathcal G(M)$ and $\gamma:\mathbb S^1 \rightarrow \rho(\Delta_g)$, a smooth, closed, positively oriented Jordan curve. For $\tilde g \in \mathcal G(M)$, let $ P_\gamma(\tilde g)$ denote the spectral projection to the eigenspaces of $\Delta_{\tilde g}$ corresponding to eigenvalues encircled by $\gamma$. Then there exists a neighborhood $\mathcal U \subseteq  \mathcal G(M)$ of $g$ such that $ P_\gamma: \mathcal U \rightarrow \mathcal L(L^2(M),H^2(M))$ is continuously differentiable.
    \end{enumerate}
\end{lemma}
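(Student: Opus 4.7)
The plan is to handle the three claims in sequence, reducing (2) and (3) to (1) via standard resolvent and Riesz-integral arguments.

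For (1), I would work chart by chart using a finite atlas and a subordinate partition of unity. In each chart $\Delta_g u = a^{jk}(g)\,\partial_j\partial_k u + b^k(g,\partial g)\,\partial_k u$, where $a^{jk}$ is a smooth function of the components of $g$ and $b^k$ is a smooth function of the components of $g$ together with their first coordinate derivatives. The identities $D_h |g|^{1/2} = \tfrac12 \tr_g(h)\,|g|^{1/2}$ and $D_h g^{jk} = -h^{jk}$ give the existence of the G\^{a}teaux derivative and produce the stated formula after a brief coordinate computation. To upgrade this to $C^1$ in the sense of Hamilton, I would exploit the fact that multiplication by a $C^0$ function is bounded from $H^2$ into $L^2$ (respectively from $H^1$ into $L^2$) with operator norm controlled by its supremum norm. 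Since the coefficients of $D\Delta(g)[h]$ depend continuously in the $C^0$-norm on the pair $(g,h)$ as $(g,h)$ varies in the Fr\'{e}chet topology on metrics and symmetric $(0,2)$-tensors, the map $(g,h)\mapsto D\Delta(g)[h]$ is jointly continuous into $\mathcal L(H^2,L^2)$.

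For (2), I would first note that the affine map $(z,g) \mapsto z\iota - \Delta_g$ is $C^1$ from $\mathbb C \times \mathcal G(M)$ into $\mathcal L(H^2,L^2)$, where $\iota: H^2(M) \hookrightarrow L^2(M)$ denotes the continuous inclusion. The set of invertible elements of $\mathcal L(H^2,L^2)$ is open, and inversion is smooth on it as a map into $\mathcal L(L^2,H^2)$, as is seen via the Neumann series expansion about a fixed invertible operator. Since $\lambda\iota - \Delta_g$ is invertible by hypothesis, there exists a neighborhood $\mathbb B_\varepsilon(\lambda) \times \mathcal U$ on which $(z-\Delta_{\tilde g})^{-1}$ is well-defined and $C^1$; the product rule of the preliminaries then yields the required joint $C^1$ dependence.

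For (3), part (2) ensures that, after possibly shrinking $\mathcal U$, the integrand $(\zeta - \Delta_{\tilde g})^{-1}$ is defined and $C^1$ on the compact set $\gamma(\mathbb S^1) \times \mathcal U$. Uniform joint continuity of the derivative on this compact set justifies differentiation under the contour integral, giving that $P_\gamma$ is $C^1$ with
\begin{align*}
DP_\gamma(g)[h] = \frac{1}{2\pi i}\int_\gamma (\zeta - \Delta_g)^{-1} \circ D\Delta(g)[h] \circ (\zeta - \Delta_g)^{-1}\, d\zeta.
\end{align*}

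The principal obstacle lies in (1): Hamilton's definition of $C^1$ demands \emph{joint} continuity of the G\^{a}teaux derivative, not merely continuity in $g$ for fixed $h$. Verifying this is a matter of carefully translating the Fr\'{e}chet topology on $\mathcal G(M)$ (defined via all $C^r$-seminorms) into Banach operator-norm continuity on $\mathcal L(H^2,L^2)$, using boundedness of multiplication operators between Sobolev spaces. Once (1) is established, parts (2) and (3) are essentially formal consequences of the smoothness of operator inversion and standard differentiation under the integral sign.
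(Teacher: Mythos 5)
Your proposal is correct and follows essentially the same route as the paper: localize via a partition of unity, differentiate the coefficients using $D_h|g|^{1/2}=\tfrac12\tr_g(h)|g|^{1/2}$ and $D_h g^{jk}=-h^{jk}$, and obtain joint continuity of $(g,h)\mapsto D\Delta(g)[h]$ from $C^0$-control of the coefficients and boundedness of multiplication operators, then deduce (2) from $C^1$-ness of operator inversion (Hamilton's Theorem II.3.1.1 is exactly this) and (3) by differentiating under the Riesz contour integral. The only cosmetic difference is that you work with the non-divergence coefficient form $a^{jk}\partial_j\partial_k+b^k\partial_k$, while the paper factors $\chi_i\Delta_g$ as a composition $\chi_i\circ|g|^{-1/2}\circ\partial_j\circ|g|^{1/2}g^{jk}\circ\partial_k$ of fixed derivatives and $C^1$ multiplication operators and applies the product rule; both implement the same estimate.
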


\begin{proof}
    To allow computation in coordinates, choose a smooth partition of unity $\sum_{j=1}^\infty \chi_i = 1$ adapted to a coordinate atlas. It is then sufficient to check $\chi_i \Delta(g)$ is smooth in $g$ for all $i$. The operator $\chi_i \Delta(g)$ can be written as a sum of compositions of smooth vector fields $\partial_j$, $j=1,\ldots,n$ and multiplication operators which depend in a continuously differentiable fashion on $g$: 
    \begin{align*}
        \chi_i \Delta(g) = \chi_i \circ |g|^{-\frac{1}{2}} \circ \partial_j \circ |g|^{\frac{1}{2}} \circ g^{jk} \circ \partial_k
    \end{align*}
    This composition should be read as follows:
    \begin{enumerate}
        \item $\partial_k$ is a bounded linear operator $\mathcal L(H^2(M),H^1(M))$, independent of $g$,
        \item $|g|^{\frac{1}{2}} \circ g^{jk}$ is a $C^1$ family of bounded operators on $H^1(M)$,
        \item $\partial_j$ is a fixed bounded linear operator $\mathcal L(H^1(M),L^2(M))$, and
        \item $\chi_i \circ |g|^{-\frac{1}{2}}$ is a $C^1$ family of bounded operators on $L^2(M)$.
    \end{enumerate}
    For continuous differentiability of the multiplication operator $|g|^{\frac{1}{2}} \circ g^{jk}$ on $H^1(M)$ it is crucial that $g$ has at least one derivative, since the derivative may hit the metric. This is more than guaranteed by working with $g \in \mathcal G(M)$. The formula for $D\Delta(g)[h]$ follows from the chain and product rule as well as the identities
    \begin{align*}
        D|g|[h] &= |g|\tr_gh, & Dg^{jk}[h] = -h^{jk}.&
    \end{align*}
    By the same argument, $\lambda - \Delta(g)$ is a family of bounded operators from $H^2(M)$ to $L^2(M)$ which depends continuously on both $\lambda$ and $g$.
    
    Now fix $g \in \mathcal G(M)$ and $\lambda \in \rho(M)$. From the continuous dependence of the eigenvalues on the metric \cite[Lemma 4.5.13]{LableeSpectralTheory}, it follows that there exists a neighborhood $\mathcal U$ of $g$ (which can even be chosen to be a $C^0$-norm ball), and $\varepsilon > 0$ such that $(z-\Delta(\tilde g))^{-1} \in \mathcal L(L^2(M),H^2(M))$ is defined for all $\tilde g \in \mathcal U$ and $z \in \mathbb B_\varepsilon(\lambda)$. By \cite[Theorem II.3.1.1]{Hamilton1982}, it follows that $(z-\Delta)^{-1}: \mathbb B_\varepsilon(\lambda) \times \mathcal U \to \mathcal L(L^2(M),H^2(M)$ is continuously differentiable.

    Finally, fix a metric $g \in \mathcal G(M)$ and a curve $\gamma$ in the resolvent set of $g$ as in the statement of the lemma. The spectral projection $P_\gamma$ is given by 
    \begin{align*}
        P_\gamma(\tilde g) = \frac{1}{2\pi i} \int_\gamma (z - \Delta(\tilde g))^{-1} dz,
    \end{align*}
    which is defined whenever $\gamma$ avoids the spectrum on $\tilde g$. In particular, there exists a neighborhood $\mathcal U \subseteq \mathcal G(M)$ of $g$ (which again may be taken as a $C^0$-ball) such that $P_\gamma: \mathcal U \to \mathcal L(L^2(M),H^2(M))$ exists. Continuous differentiability follows from differentiating under the integral using the dominated convergence theorem.
\end{proof}

\newpage

\printbibliography
\newpage
\section*{Data availability statement}
The authors confirm that there is no data associated to this manuscript. This work does not use or generate any datasets.
\section*{Conflict of interest statement}
The authors declare no conflict of interest.
\end{document}